\undefined\PassOptionsToPackage{dvips}{graphicx}%
\title{Th\'eor\`emes de Borel avec contraintes}
\author{D. Cerveau, D. Garba Belko  }
\date{}
\DeclareSymbolFont{lasy}{U}{lasy}{m}{n}
\let\Box\undefined
\DeclareMathSymbol\Box{\mathord}{lasy}{"32}
\newtheorem{theorem}{Th\'eor\`eme}
\newtheorem{proposition}[theorem]{Proposition}
\newtheorem{lemma}[theorem]{Lemme}
\newtheorem{corollary}[theorem]{Corollaire}
\newtheorem{remark}[theorem]{Remarque}
\newtheorem{example}[theorem]{Exemple}
\newtheorem{examples}[theorem]{Exemples}
\newtheorem{definition}[theorem]{D\'efinition}
\newtheorem*{conjecture}{Conjecture}
\newcommand {\junk}[1]{}
\def\maketitle{\par
\begingroup
\def\@makefnmark{\hbox
to 0pt{$^{\@thefnmark}$\hss}} \if@twocolumn
\twocolumn[\@maketitle] \else \newpage \global\@topnum\z@
\@maketitle \fi\thispagestyle{plain}\@thanks
\endgroup
\setcounter{footnote}{0}
\let\maketitle\relax
\let\@maketitle\relax
\gdef\@thanks{}\gdef\@author{}\gdef\@title{}\let\thanks\relax}
\def\.@{\char'76}
\def\.@{\char'76}
\begin{document}

\selectlanguage{french}
\begin{abstract} Un th\'eor\`eme classique de Borel affirme que chaque s\'erie formelle \`a coefficients r\'eels est le jet Taylorien d'un germe de fonction $\mathcal{C}^{\infty}$. Nous \'etudions ce type de probl\`eme en particulier pour des alg\`ebres de Lie de champs de vecteurs ou des groupes de diff\'eomorphismes.

\selectlanguage{english}
A classical theorem due to Borel asserts that any formal serie with real coefficients is the Taylor expansion of a germ of $\mathcal{C}^{\infty}- {\rm function}$. We study such a problem in the context of Lie algebras of vector fields or of groups of diffeomorphisms.
\end{abstract}
\maketitle


\selectlanguage{french}

\section*{Introduction} \label{sec Introduction}
\addcontentsline{toc}{section}{Introduction}

Soient $\mathcal{E}_{n}$ l'anneau des germes de fonctions   $\mathcal{C}^{\infty}$ \`a l'origine $\underline{0}$ de $\mathbb{R}^{n}$
et $\hat{\mathcal{E}}_{n}$ celui des s\'eries formelles en  $n$ ind\'etermin\'ees et \`a coefficients r\'eels. L'application:

$$T_{\underline{0}}: \mathcal{E}_{n} \rightarrow \hat{\mathcal{E}}_{n}$$
qui \`a une fonction $f$ associe son jet de Taylor infini est un morphisme d'anneau. Le th\'eor\`eme de r\'ealisation de Borel affirme que $T_{\underline{0}}$ est surjectif:
 si $\hat{f} \in \hat{\mathcal{E}}_{n}$ est une s\'erie formelle, il existe un germe de fonction $f \in \mathcal{E}_{n}$ tel que  $T_{\underline{0}} f = \hat{f}$.
 Un tel $f$ sera dit une r\'ealisation $\mathcal{C}^{\infty}$ de $\hat{f}$.   Deux r\'ealisations de $\hat{f}$ diff\`erent par une fonction plate en $\underline{0}$.
 Si l'on note $\mathcal{M}_{n} \subset \mathcal{E}_{n}$ l'id\'eal maximal constitu\'e des germes s'annulant au point $\underline{0}$, alors le noyau de
 $T_{\underline{0}}$ est exactement l'id\'eal des fonctions plates $Ker T_{\underline{0}} = \mathcal{M}_{n}^{\infty} = \bigcap_{k} \mathcal{M}_{n}^{k}$.
 L'application $T_{\underline{0}}$ s'\'etend naturellement aux $\mathcal{E}_{n}$-modules "classiques". Si $\Omega_{n}^{k}$ d\'esigne le $\mathcal{E}_{n}$-module des
 germes de $k$-formes diff\'erentielles et $\hat{\Omega}_{n}^{k}$ le $\hat{\mathcal{E}}_{n}$-module des $k$-formes formelles on d\'esignera encore
 $T_{\underline{0}}: \Omega_{n}^{k} \rightarrow \hat{\Omega}_{n}^{k}$ le morphisme qui \`a une $k$-forme $\mathcal{C}^{\infty}$ associe son jet Taylorien infini en
 l'origine. De m\^eme si $\mathcal{X}_{n}$ (resp. $\hat{\mathcal{X}}_{n}$) d\'esigne l'alg\`ebre de Lie des germes de champs de vecteurs $\mathcal{C}^{\infty}$
 (resp. formels) et $\mathrm{Diff}(\mathbb{R}^{n}_{0})$ (resp. $\widehat{\mathrm{Diff}}(\mathbb{R}^{n}_{0})$) celui des germes de diff\'eomorphisme $\mathcal{C}^{\infty}$
 (resp. formels), en $\underline{0} \in \mathbb{R}^{n}$, on dispose encore de morphismes de jets Tayloriens
 $T_{\underline{0}}: \mathcal{X}_{n} \rightarrow \hat{\mathcal{X}}_{n}$ et $T_{\underline{0}}: \mathrm{Diff}(\mathbb{R}^{n}_{0}) \rightarrow \widehat{\mathrm{Diff}}(\mathbb{R}^{n}_{0})$.
 Dans le premier cas c'est un morphisme d'alg\`ebre de Lie et dans le second un morphisme de groupe. Le th\'eor\`eme de Borel s'implante brutalement sur ces espaces, i.e.
 tout \'el\'ement de $\hat{\Omega}_{n}^{k}$, $\hat{\mathcal{X}}_{n}$ ou $\widehat{\mathrm{Diff}}(\mathbb{R}^{n}_{0})$ poss\`ede une r\'ealisation $\mathcal{C}^{\infty}$ dans les
 espaces correspondants. Mais ces espaces poss\`edent des structures suppl\'ementaires, produit ext\'erieur et op\'erateur $d$ pour les $\Omega_{n}^{k}$, crochet de Lie
 pour les champs de vecteurs et composition pour les diff\'eomorphismes. Se posent alors les probl\`emes de r\'ealisation de type Borel tenant compte de ces structures. Ce que nous appelons le th\'eor\`eme de Borel avec contraintes. En voici quelques exemples:
\begin{enumerate}
\item Etant donn\'e, $\hat{G} \subset \widehat{\mathrm{Diff}}(\mathbb{R}^{n}_{0})$, un sous groupe de type fini de diff\'eomor-phismes formels, existe-t-il une r\'ealisation $G \subset \mathrm{Diff}(\mathbb{R}^{n}_{0})$ telle que la restriction $T_{\underline{0}}: G \rightarrow \hat{G}$ soit un isomorphisme de groupe?
\item Soit $\hat{\mathcal{G}} \subset \hat{\mathcal{X}}_{n}$ une sous alg\`ebre de Lie de champs de vecteurs formels de dimension finie. Existe-t-il une r\'ealisation $\mathcal{G} \subset \mathcal{X}_{n}$ telle que la restriction $T_{\underline{0}}: \mathcal{G} \rightarrow \hat{\mathcal{G}}$ soit un isomorphisme d'alg\`ebre de Lie?
\item Soit $\hat{\omega}$ une 1-forme int\'egrable formelle non triviale. Peut on cette fois trouver une r\'ealisation $\omega \in \Omega^{1}_{n}$ de $\hat{\omega}$ qui soit int\'egrable, i.e. $d \omega \wedge \omega = 0$? Un probl\`eme analogue se pose pour les syst\`emes de Pfaff.
\end{enumerate}

Les probl\`emes de type Borel  ont int\'eress\'e de nombreux math\'ematiciens. C'est ainsi que dans le cadre de l'\'etude des alg\`ebres quasi-analytiques J. C. Tougeron \cite{Tougeron3}
montre que le morphisme  $T_{\underline{0}}: \mathcal{E}_{1} \rightarrow \hat{\mathcal{E}}_{1}$ poss\`ede des sections. Toutefois ces sections ne respectent pas la
composition.
En dimension 2, o\`u la condition d'int\'egrabilit\'e est triviale, R. Roussarie \cite{Roussarie} a donn\'e plusieurs r\'esultats de type Borel.

Sans r\'esoudre, en toute g\'en\'eralit\'e, les probl\`emes \'enum\'er\'es ci-dessus nous apportons des r\'eponses positives dans quelques cas particuliers. Ces r\'eponses sont parfois
des adaptations de r\'esultats relativement classiques (d\'etermination finie par exemple) ou n\'ecessitant des techniques sp\'ecifiques.

 \section{Alg\`ebres de Lie de champs de vecteurs}

 \subsection{Alg\`ebres semi-simples, alg\`ebres de rang ponctuel 1, alg\`e-bres saturables}

 Soient $\hat{\mathcal{L}}$ une sous alg\`ebre de Lie de $\hat{\mathcal{X}}_{n}$ et $\hat{\mathcal{L}} (\underline{0}) = \{ \hat{X} (\underline{0})   / \hat{X} \in \hat{\mathcal{L}} \}$
 l'\'evaluation de $\hat{\mathcal{L}}$ en $\underline{0}$. Nous nous int\'eressons au cas purement singulier o\`u $\hat{\mathcal{L}} (\underline{0}) = \{ \underline{0} \}$.
 Sous cette hypoth\`ese l'ensemble ${\mathcal{L}}^{1} = \{J^{1} X / X \in \hat{\mathcal{L}} \}$ "des parties lin\'eaires" des \'el\'ements  de $\hat{\mathcal{L}}$ est une
 sous alg\`ebre de Lie de l'alg\`ebre de Lie  $\mathcal{X}^{1}_{n}$ des champs de vecteurs lin\'eaires de $\mathbb{R}^{n}$. Notons que $\mathcal{X}^{1}_{n}$ est
 isomorphe \`a l'espace vectoriel des endomorphismes, $\mathrm{End} \mathbb{R}^{n}$, de $\mathbb{R}^{n}$. \\

 Supposons que $\hat{\mathcal{L}}$  soit semi-simple, i.e. $\hat{\mathcal{L}}$  n'admet pas d'id\'eal r\'esoluble non nul. D'apr\`es un r\'esultat de R. Hermann \cite{Hermann} $J^{1} : \hat{\mathcal{L}} \rightarrow \mathcal{L}^{1}$ est injectif et  $\hat{\mathcal{L}}$ est formellement lin\'earisable. Ceci signifie qu'il existe $\hat{\Phi} \in \widehat{\mathrm{Diff}}(\mathbb{R}^{n}_{0})$ qui conjugue $\hat{\mathcal{L}}$ \`a $\mathcal{L}^{1}$:
 $$\hat{\mathcal{L}} = \hat{\Phi}_{\ast} \hat{\mathcal{L}}^{1} = \{ \hat{\Phi}_{\ast} (J^{1} X ) / X \in \hat{\mathcal{L}} \}.$$

 Soit $\Phi$ une r\'ealisation $\mathcal{C}^{\infty}$ de $\hat{\Phi}$; l'alg\`ebre de Lie $\mathcal{L} = \Phi_{\ast} \mathcal{L}^{1}$ est une r\'ealisation de
 $\hat{\mathcal{L}}$ et par construction $T_{\underline{0}}: \mathcal{L} \rightarrow \hat{\mathcal{L}}$ est un isomorphisme. D'o\`u le:

 \begin{theorem} Soit $\hat{\mathcal{L}} \subset \mathcal{M}_{n} \hat{\mathcal{X}}_{n}$ une alg\`ebre de Lie semi-simple de champ de vecteurs formels.  Alors
 $\hat{\mathcal{L}}$ poss\`ede une r\'ealisation $\mathcal{C}^{\infty}$ not\'ee $\mathcal{L}$ telle que \\ $T_{\underline{0}}: \mathcal{L} \rightarrow \hat{\mathcal{L}}$ soit
 un isomorphisme.
\end{theorem}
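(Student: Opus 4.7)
Le plan consiste \`a exploiter la structure mise en place dans le paragraphe pr\'ec\'edant l'\'enonc\'e: ramener le probl\`eme de r\'ealisation \`a celui d'un diff\'eomorphisme formel lin\'earisant, puis invoquer le th\'eor\`eme de Borel classique sur ce diff\'eomorphisme. La premi\`ere \'etape est d'appliquer le r\'esultat de R. Hermann \cite{Hermann} \`a $\hat{\mathcal{L}}$: la semi-simplicit\'e garantit que $J^1: \hat{\mathcal{L}} \rightarrow \mathcal{L}^1$ est un isomorphisme d'alg\`ebres de Lie et fournit un $\hat{\Phi} \in \widehat{\mathrm{Diff}}(\mathbb{R}^{n}_{0})$ tel que $\hat{\mathcal{L}} = \hat{\Phi}_{\ast} \mathcal{L}^1$.

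Ensuite, j'appliquerais le th\'eor\`eme de Borel classique composante par composante aux coordonn\'ees de $\hat{\Phi}$, obtenant un germe $\mathcal{C}^{\infty}$ $\Phi$ v\'erifiant $T_{\underline{0}} \Phi = \hat{\Phi}$. Comme $\hat{\Phi}$ est un diff\'eomorphisme formel, son 1-jet $J^1 \hat{\Phi}$ est inversible; la diff\'erentielle de $\Phi$ en $\underline{0}$ l'est donc aussi, et le th\'eor\`eme d'inversion locale assure que $\Phi$ est un v\'eritable germe de diff\'eomorphisme $\mathcal{C}^{\infty}$ au voisinage de $\underline{0}$.

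Je poserais alors $\mathcal{L} = \Phi_{\ast} \mathcal{L}^1 \subset \mathcal{X}_n$; c'est une sous-alg\`ebre de Lie car $\Phi_{\ast}$ est un isomorphisme d'alg\`ebres de Lie. La v\'erification que $T_{\underline{0}}: \mathcal{L} \rightarrow \hat{\mathcal{L}}$ est un isomorphisme repose sur la compatibilit\'e naturelle $T_{\underline{0}}(\Phi_{\ast} Y) = \hat{\Phi}_{\ast} Y$ pour tout $Y \in \mathcal{L}^1$ (calcul formel direct sur les coefficients): la surjectivit\'e sur $\hat{\Phi}_{\ast} \mathcal{L}^1 = \hat{\mathcal{L}}$ en r\'esulte imm\'ediatement, et l'injectivit\'e suit du fait que $T_{\underline{0}}$ est trivialement injectif sur l'alg\`ebre $\mathcal{L}^1$ constitu\'ee de champs lin\'eaires.

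L'\'etape principale n'est pas tant un obstacle qu'un pr\'erequis \emph{noir}: c'est le th\'eor\`eme de Hermann qui effectue tout le travail non trivial, en r\'eduisant l'\'etude au cas lin\'eaire o\`u l'identit\'e $\mathcal{L}^1 \subset \mathcal{X}_n$ fournit d\'ej\`a une r\'ealisation canonique. Une fois cette r\'eduction acquise, la preuve se limite \`a v\'erifier que le th\'eor\`eme de Borel brut s'applique \`a un diff\'eomorphisme formel, ce qui ne soul\`eve aucune difficult\'e gr\^ace \`a l'inversion locale. Le r\'esultat illustre donc le principe que les contraintes de structure \emph{rigides} (ici la lin\'earisabilit\'e formelle) transf\`erent la question \`a la contrainte la plus simple possible.
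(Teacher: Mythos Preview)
Your proposal is correct and follows essentially the same approach as the paper: invoke Hermann's linearization result to obtain $\hat{\Phi}$ with $\hat{\mathcal{L}} = \hat{\Phi}_{\ast}\mathcal{L}^{1}$, realize $\hat{\Phi}$ by a $\mathcal{C}^{\infty}$ diffeomorphism $\Phi$ via Borel, and set $\mathcal{L} = \Phi_{\ast}\mathcal{L}^{1}$. You simply make explicit a few routine verifications (local inversion for $\Phi$, injectivity of $T_{\underline{0}}$ on $\mathcal{L}$) that the paper leaves implicit.
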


Comme ci-dessus toutes les  alg\`ebres de Lie lin\'earisables de champs formels poss\`edent une r\'ealisation $\mathcal{C}^{\infty}$; de m\^eme celles qui sont conjugu\'ees \`a
une alg\`ebre de champs polynomiaux. C'est le cas en petite dimension $n$ d'espace. En dimension $n = 1$, la classification formelle des sous alg\`ebres
$\hat{\mathcal{L}}$ de dimension finie de $\hat{\mathcal{X}}_{1}$ fait partie du folklore. Elle est probablement connue  de S. Lie, F. Klein et E. Cartan.
\begin{enumerate}
\item $n = 1$ et $\dim \hat{\mathcal{L}} = 1$; $\hat{\mathcal{L}}$ est formellement conjugu\'ee \`a l'alg\`ebre engendr\'ee par l'un des champs $\frac{\partial}{\partial x}$ et $X_{p , \lambda} = \frac{x^{p + 1}}{1 - \lambda x^{p}} \frac{\partial}{\partial x}$ avec $p \in \mathbb{N}$ et $\lambda \in \mathbb{R}$.
\item $n = 1$ et $\dim \hat{\mathcal{L}} = 2$; $\hat{\mathcal{L}}$ est formellement conjugu\'ee \`a l'une des alg\`ebres $\langle \frac{\partial}{\partial x} , x \frac{\partial}{\partial x} \rangle$ et $\langle  x \frac{\partial}{\partial x} , x^{p} \frac{\partial}{\partial x} \rangle$ avec $p \in \mathbb{N} \setminus \{ 0 \}$. Toutes ces alg\`ebres sont isomorphes \`a l'alg\`ebre du groupe des transformations affines de la droite.
\item $n = 1$ et $\dim \hat{\mathcal{L}} = 3$; $\hat{\mathcal{L}}$ est formellement conjugu\'ee \`a l'alg\`ebre \\ $\langle \frac{\partial}{\partial x} , x \frac{\partial}{\partial x} , x^{2} \frac{\partial}{\partial x} \rangle$ qui est l'alg\`ebre du groupe des transformations homographiques $\mathbb{P }G  L ( 2 , \mathbb{R} )$.
\end{enumerate}

Toutes ces alg\`ebres $\hat{\mathcal{L}}$ sont formellement conjugu\'ees \`a des alg\`ebres de champs analytiques $\mathcal{L}^{an}$. Le th\'eor\`eme de Borel usuel, appliqu\'e \`a
une conjuguante $\hat{\Phi}$ ($\hat{\mathcal{L}} = \hat{\Phi}_{\star} \mathcal{L}^{an}$) produit une r\'ealisation $\mathcal{L} = \Phi_{\star} \mathcal{L}^{an}$ des $\hat{\mathcal{L}}$
consid\'er\'ees. En dimension plus grande la classification des alg\`ebres de champs formels n'est pas connue. En dimension deux on peut faire la liste de celles qui sont
formellement lin\'earisables (pour lesquelles on aura donc des \'enonc\'es de type Borel). Ceci met en jeu des conditions de non r\'esonance, \`a la Poincar\'e, portant sur leur
radical r\'esoluble. En un certain sens cette liste de type "zoologique" ne met \`a jour ni des techniques nouvelles, ni des r\'esultats nouveaux. Pour illustrer ce qui
pr\'ec\`ede nous allons traiter quelques cas sp\'eciaux en dimension 2 (d'espace); en particulier  celui des alg\`ebres commutatives. Nous avons pour cela besoin de la
notion de rang ponctuel g\'en\'erique ($\mathcal{r} ( \hat{\mathcal{L}} )$) que nous d\'efinissons pour n'importe quelle alg\`ebre $\hat{\mathcal{L}}$ de $\hat{\mathcal{X}}_{n}$.

\begin{definition} Soit $\hat{\mathcal{L}}  \subset \hat{\mathcal{X}}_{n}$ une sous alg\`ebre non nulle; le rang ponctuel g\'en\'erique
(ou plus simplement le rang) $\mathcal{r} (\hat{\mathcal{L}})$ est le nombre maximal $k$ d'\'el\'ements, $\hat{X}_{1} ,  \ldots  , \hat{X}_{k}$ de $ \hat{\mathcal{L}}$,
qui sont  $\hat{\mathcal{E}}_{n}$-ind\'ependants. Si $\hat{X}_{j} = \sum \hat{a}_{i , j} \frac{\partial}{\partial x_{i}}$, $\hat{a}_{i , j} \in \hat{\mathcal{E}}_{n}$, $j = 1 ,  \ldots  , \mathcal{r} ( \hat{\mathcal{L}} )$, la matrice $(\hat{a}_{i , j})$ poss\`ede un mineur $\mathcal{r} (\hat{\mathcal{L}}) \times \mathcal{r} (\hat{\mathcal{L}})$ de d\'eterminant non nul.
\end{definition}

Le rang ponctuel g\'en\'erique est trivialement major\'e par la dimension ambiante. Par exemple l'alg\`ebre
$\hat{\mathcal{L}} = \{ \hat{f}(x_{2}) \frac{\partial}{\partial x_{1}} / \hat{f} \in \hat{\mathcal{E}}_{1} \}$ est une sous alg\`ebre de Lie de $\hat{\mathcal{E}}_{2} $
de dimension infinie et de rang $1$. Notons que $\hat{\mathcal{L}}$ est commutative et que le th\'eor\`eme de Borel, appliqu\'e aux $\hat{f}$, donne une r\'ealisation
$\mathcal{C}^{\infty}$ de $\hat{\mathcal{L}}$. L'alg\`ebre $\mathcal{L} = \{ f(x_{2}) \frac{\partial}{\partial x_{1}} / f \in \mathcal{E}_{1} \}$  est commutative et
se projette sur $\hat{\mathcal{L}}$ ($T_{\underline{0}} \mathcal{L} = \hat{\mathcal{L}}$), mais $T_{\underline{0}} : \mathcal{L} \rightarrow \hat{\mathcal{L}}$ n'est
pas un isomorphisme puisque si $P \in \mathcal{E}_{1}$ est une fonction plate, alors $T_{\underline{0}} (P(x_{2})\frac{\partial}{\partial x_{1}} = 0$. En fait
choisissons une $\mathbb{R}$-base $\{ \hat{a}_{i} , i \in I \}$ du $\mathbb{R}$-espace vectoriel $\hat{\mathcal{E}}_{1}$ et soient $a_{i}\in \mathcal{E}_{1}$, $i \in I$,
des r\'ealisations de Borel des $\hat{a}_{i}$. Alors le $\mathbb{R}$-espace vectoriel $\mathcal{E}^{'}$ engendr\'e par les $a_{i}$ produit une r\'ealisation
$\mathcal{L}^{'} = \{ f(x_{2}) \frac{\partial}{\partial x_{1}} / i \in I \}$ pour laquelle $T_{\underline{0}} : \mathcal{L}^{'} \rightarrow \hat{\mathcal{L}}$ est
un isomorphisme.

\subsection{Description des sous alg\`ebres $\hat{\mathcal{L}} \subset \hat{\mathcal{X}}_{n}$ de dimension finie et de rang 1}

Soient $\hat{X}$ un champ formel et $E \subset \hat{\mathcal{E}}_{n}$ un sous espace vectoriel ayant la propri\'et\'e suivante:

$$\forall  (\hat{f} , \hat{g}) \in E \times E , \hat{f} \hat{X} (\hat{g}) - \hat{g} \hat{X} (\hat{f}) \in E  \eqno{(\star)}$$

o\`u $\hat{f} \mapsto \hat{X} ( \hat{f} )$ est la d\'erivation associ\'ee \`a $\hat{X}$. Alors l'alg\`ebre
 $\hat{\mathcal{L}} = E . \hat{X} = \{ \hat{f} \hat{X} / \hat{f}
\in E \}$ est de rang $1$ et de dimension celle de $E$. En fait toute sous alg\`ebre de Lie  $\hat{\mathcal{L}}$ de rang $1$ s'obtient ainsi. En effet si $\hat{Y} = \sum \hat{a}_{i } \frac{\partial}{\partial x_{i}}$, $\hat{a}_{i } \in \hat{\mathcal{E}}_{n}$ est un \'el\'ement non nul, alors le champ $\hat{X} = \frac{\hat{Y}}{\mathrm{pgcd} ( \hat{a}_{1} , \ldots , \hat{a}_{n} )}$ convient. A noter que le champ $\hat{X}$ n'appartient peut \^etre pas \`a $\hat{\mathcal{L}}$.

L'alg\`ebre $\hat{\mathcal{L}}$ de rang $1$ sera dite \textit{saturable} s'il existe $\hat{X} = \sum \hat{a}_{i } \frac{\partial}{\partial x_{i}} \in \hat{\mathcal{L}}$
satisfaisant $\mathrm{pgcd} ( \hat{a}_{1} , \ldots , \hat{a}_{n} ) = 1$. Dans ce cas $\hat{\mathcal{L}} = \{ \hat{f} \hat{X} / \hat{f} \in E \}$ satisfait la condition
$(\ast)$. L'alg\`ebre de Lie $\langle \frac{\partial}{\partial x} , x \frac{\partial}{\partial x} , x^{2} \frac{\partial}{\partial x} \rangle$ est saturable de rang
$1$; ici $\hat{X} = \frac{\partial}{\partial x}$. Par contre l'alg\`ebre $\langle  x \frac{\partial}{\partial x} , x^{2} \frac{\partial}{\partial x} \rangle$ n'est pas
saturable. On obtient d'autres exemples d'alg\`ebres saturables; pour cela notons $R_{n} = \sum x_{i} \frac{\partial}{\partial x_{i}} \in \hat{\mathcal{X}}_{n}$
le champ "radial" et $E_{n}^{d}$ l'espace vectoriel des polyn\^omes homog\`enes de degr\'e $d$ en les variables
$x_{1} , \ldots , x_{n}$. Les alg\`ebres de Lie $\mathcal{R}_{n}^{d} :=  E_{n}^{d} R_{n} = \{ f . R_{n} / f \in E_{n}^{d} \}$ sont commutatives de rang ponctuel
g\'en\'erique $1$. Parmi ces  alg\`ebres seules les $\mathcal{R}_{n}^{0}$, $n \geq 2$, sont saturables. On peut fabriquer d'autres sous alg\`ebres de $\hat{\mathcal{X}}_{n}$
au moyen de $\mathcal{R}_{n}^{d}$. Par exemple les alg\`ebres $\overline{\mathcal{R}}_{n}^{d}$, engendr\'ees par le champ radial et $\mathcal{R}_{n}^{d}$, sont
r\'esolubles de rang $1$. Les sous alg\`ebres de $\mathrm{GL} (n, \mathbb{R}) \subset \mathrm{End} \mathbb{R}^{n}$, en particulier les $\mathfrak{sl} (n, \mathbb{R})$, peuvent \^etre vues
comme des alg\`ebres de champs de vecteurs lin\'eaires sur $\mathbb{R}^{n}$.

Notons $\mathfrak{sl} \mathbb{R}^{d}_{n}$ le $\mathbb{R}$-espace vectoriel des champs de vecteurs engendr\'e par $\mathfrak{sl} (n, \mathbb{R})$ et $\mathbb{R}^{d}_{n}$; c'est en fait
une sous alg\`ebre de Lie dont la d\'ecomposition de Levi-Malcev est: $\mathfrak{sl} \mathbb{R}^{d}_{n}  = \mathbb{R}^{d}_{n} \oplus \mathfrak{sl} (n, \mathbb{R})$. Ces alg\`ebres sont de
rang maximal $n$. De m\^eme les alg\`ebres $\mathfrak{sl}  \overline{\mathbb{R}}^{d}_{n}$ engendr\'ees par $\mathcal{R}_{n}$ et $\mathfrak{sl}  \mathbb{R}^{d}_{n} $ ont leur d\'ecomposition de
Levi-Malcev du type suivant: $\mathfrak{sl}  \overline{\mathbb{R}}^{d}_{n}  = \overline{\mathbb{R}}^{d}_{n} \oplus \mathfrak{sl} (n, \mathbb{R})$. La d\'efinition suivante est naturelle:

\begin{definition} Soit $\mathcal{L}^{d}$ une sous alg\`ebre de Lie de $\hat{\mathcal{X}}_{n}$ dont les \'el\'ements sont des champs de vecteurs polyn\^omiaux de degr\'e
inf\'erieur ou \'egal \`a $d$. On dit que $\mathcal{L}^{d}$ est $d$-d\'eterminante si pour toute sous alg\`ebre $\hat{\mathcal{L}} \subset \hat{\mathcal{E}}_{n}$ ayant
$\mathcal{L}^{d}$ pour $d$-jet et telle que l'application jet d'ordre $d$ ($J^{d} : \hat{\mathcal{L}} \rightarrow \mathcal{L}^{d}$) soit un isomorphisme d'alg\`ebre de
Lie, alors $\hat{\mathcal{L}}$ et $\mathcal{L}^{d}$ sont conjugu\'ees: il existe $\hat{\Phi}$ appartenant \`a $\widehat{\mathrm{Diff}}(\mathbb{R}^{n}_{0})$ tel que
$\hat{\Phi}_{\ast} \hat{\mathcal{L}} = \mathcal{L}^{d}$.
\end{definition}

\begin{theorem} Les alg\`ebres $\overline{\mathbb{R}}^{d}_{n}$ et $\mathfrak{sl}  \overline{\mathbb{R}}^{d}_{n}$ sont $d + 1$ d\'eterminantes.
\end{theorem}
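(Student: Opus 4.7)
L'intuition directrice est que le champ radial $R_n$ appartient aux deux algèbres cibles et que l'opérateur $\mathrm{ad}(R_n)$ agit de façon semi-simple sur $\hat{\mathcal{X}}_n$: on a $[R_n, x^\alpha \frac{\partial}{\partial x_j}] = (|\alpha|-1)\, x^\alpha \frac{\partial}{\partial x_j}$, ses espaces propres étant exactement les espaces (de dimension finie) $\mathcal{H}_k$ des champs polynomiaux homogènes de degré $k$, pour la valeur propre $k-1$. Dans l'algèbre cible $\mathcal{L}$ (désignant indifféremment $\overline{\mathbb{R}}^d_n$ ou $\mathfrak{sl}\overline{\mathbb{R}}^d_n$), les seules valeurs propres de $\mathrm{ad}(R_n)$ sont $0$ et $d$: l'espace propre pour $d$ est $E_n^d R_n$ dans les deux cas, et celui pour $0$ est $\langle R_n \rangle$ dans le premier cas, $\mathcal{H}_1 = \langle R_n\rangle \oplus \mathfrak{sl}(n,\mathbb{R})$ dans le second.

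La première étape consistera à linéariser formellement l'unique antécédent $\hat{R} \in \hat{\mathcal{L}}$ de $R_n$ par $J^{d+1}$. Écrivant $\hat{R} = R_n + \sum_{k \geq d+2} X_k$ avec $X_k \in \mathcal{H}_k$, l'inversibilité de $\mathrm{ad}(R_n)$ sur $\mathcal{H}_k$ pour $k \geq 2$ permet d'appliquer la procédure classique de Poincaré--Dulac: poser $Y_k = X_k/(k-1)$ et composer formellement les $\exp(-Y_k)$ pour obtenir $\hat{\Phi}_1 \in \widehat{\mathrm{Diff}}(\mathbb{R}^n_0)$ tel que $(\hat{\Phi}_1)_* \hat{R} = R_n$. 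Le point crucial pour la suite est que chaque $Y_k$ étant de degré $\geq d+2$, on pourra garantir que $\hat{\Phi}_1$ est tangente à l'identité à l'ordre $d+1$; il en résultera que $J^{d+1}((\hat{\Phi}_1)_* \hat{X}) = J^{d+1}(\hat{X})$ pour tout $\hat{X} \in \hat{\mathcal{L}}$, et donc que $J^{d+1}: \hat{\mathcal{L}}' \to \mathcal{L}$ demeure un isomorphisme d'algèbre de Lie, où $\hat{\mathcal{L}}' := (\hat{\Phi}_1)_* \hat{\mathcal{L}}$.

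La seconde étape exploitera la semi-simplicité de $\mathrm{ad}(R_n)|_{\hat{\mathcal{L}}'}$, héritée par transport via $J^{d+1}$ de la semi-simplicité manifeste sur $\mathcal{L}$. D'où la décomposition en espaces propres $\hat{\mathcal{L}}' = V_0 \oplus V_d$ avec $\dim V_\lambda$ égale à celle du $\lambda$-espace propre de $\mathcal{L}$. Or tout vecteur propre de $\mathrm{ad}(R_n)$ dans $\hat{\mathcal{X}}_n$ pour la valeur propre $\lambda$ appartient nécessairement à $\mathcal{H}_{\lambda+1}$ par unicité de la décomposition en composantes homogènes; donc $V_0 \subset \mathcal{H}_1$ et $V_d \subset \mathcal{H}_{d+1}$, autrement dit $\hat{\mathcal{L}}'$ est entièrement polynomiale. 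Les éléments de $V_d$, homogènes de degré $d+1$, coïncident avec leur $(d+1)$-jet; $J^{d+1}$ envoie donc $V_d$ dans $E_n^d R_n$, et l'égalité des dimensions force $V_d = E_n^d R_n$. De même pour $V_0$: dans le cas $\overline{\mathbb{R}}^d_n$, $\dim V_0 = 1$ et $R_n \in V_0$, donc $V_0 = \langle R_n\rangle$; dans le cas $\mathfrak{sl}\overline{\mathbb{R}}^d_n$, $\dim V_0 = n^2 = \dim \mathcal{H}_1$, donc $V_0 = \mathcal{H}_1$. Dans les deux cas $\hat{\mathcal{L}}' = \mathcal{L}$, ce qui conclut.

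Le point le plus délicat me semble être le contrôle du $(d+1)$-jet de $\hat{\Phi}_1$: sans la tangence à l'identité à l'ordre $d+1$, l'isomorphisme $J^{d+1}$ sur $\hat{\mathcal{L}}'$ serait composé avec un automorphisme non trivial du $(d+1)$-jet de l'espace cible, ce qui invaliderait les identifications dimensionnelles de la seconde étape. Heureusement cette condition est automatiquement assurée par le démarrage de la procédure de normalisation au degré $d+2$, rendant l'argument robuste.
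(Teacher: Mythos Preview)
Your argument is correct and follows essentially the same route as the paper: first linearise the preimage $\hat{R}$ of $R_n$ by a formal diffeomorphism tangent to the identity at order $d+1$ (Poincar\'e, using that the perturbation of $R_n$ begins in degree $\geq d+2$), then use that eigenvectors of $\mathrm{ad}(R_n)$ in $\hat{\mathcal{X}}_n$ are homogeneous, so that each element of the conjugated algebra coincides with its $(d+1)$-jet. Your eigenspace/dimension formulation is in fact a bit more explicit than the paper about the eigenvalue~$0$ part (the linear fields in the $\mathfrak{sl}\,\overline{\mathbb{R}}^{\,d}_{n}$ case), which the paper handles only implicitly.
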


\begin{proof}
Soit $\hat{\mathcal{L}} \subset \hat{\mathcal{X}}_{n}$ une sous alg\`ebre telle que $J^{d + 1} : \hat{\mathcal{L}} \rightarrow \mathcal{L}^{d + 1}$ soit un
isomorphisme avec $\mathcal{L}^{d + 1}$ \'egal \`a $\overline{\mathbb{R}}^{d}_{n}$ ou $\mathfrak{sl}  \overline{\mathbb{R}}^{d}_{n}$. Il existe donc un \'el\'ement $\hat{R}$ de
$\hat{\mathcal{L}}$ dont le jet d'ordre $d + 1$ est pr\'ecis\'ement $R_{n}$. Le th\'eor\`eme de lin\'earisation de Poincar\'e produit un diff\'eomorphisme formel $\hat{\Phi}$
v\'erifiant $J^{ d + 1} \hat{\Phi} = \mathrm{Id}_{\mathbb{R}^{n}}$ et $\hat{\Phi}_{\ast} \check{R} = R_{n}$, o\`u $\mathrm{Id}_{\mathbb{R}^{n}}$ d\'esigne l'identit\'e de $\mathbb{R}^{n}$.
Soit $\hat{X}$ un \'el\'ement de $\hat{\mathcal{L}}$ tel que $J^{d + 1} \hat{X} = X_{d}$ appartient \`a $\mathcal{R}_{n}^{d}$  ou \`a
$\mathfrak{sl}  \overline{\mathbb{R}}^{d}_{n}$. Comme $[ R , X_{d} ] = d X_{d}$ on a  $[ \hat{R }, \hat{X} ] = d \hat{X}$ puisque $J^{d + 1}$ est un isomorphisme.
On en d\'eduit que:

$$[\hat{\Phi}_{\ast} \hat{R} , \hat{\Phi}_{\ast} \hat{X} ] = d \hat{\Phi}_{\ast} \hat{X} . $$

Un calcul \'el\'ementaire montre que le champ formel $\hat{\Phi}_{\ast}\hat{X}$ est homog\`ene de degr\'e $d + 1$; comme $J^{d + 1} \hat{\Phi} = \mathrm{Id}_{\mathbb{R}^{n}}$,
$\hat{\Phi}_{\ast} \hat{X} = X_{d}$. Ainsi $\hat{\Phi}_{\ast} \hat{\mathcal{L}} = \mathcal{L}^{d + 1}$. 
\end{proof}

On d\'eduit du th\'eor\`eme pr\'ec\'edent l'existence de $\hat{\Phi}$ tel que $\hat{\Phi}_{\ast} \hat{\mathcal{L}} = \mathcal{L}^{d + 1}$. Soit $\Phi$ une
r\'ealisation de Borel de $\hat{\Phi}$, et $\mathcal{L} = (\Phi^{-1})_{\ast}  \mathcal{L}^{d + 1}$. Alors $\mathcal{L}$ est une r\'ealisation de $\hat{\mathcal{L}}$
telle que $T_{\underline{0}} : \mathcal{L} \rightarrow \hat{\mathcal{L}}$ soit un isomorphisme; d'o\`u le:

\begin{corollary} Soit $\hat{\mathcal{L}} \subset \hat{\mathcal{X}}_{n}$ une sous alg\`ebre de Lie. On suppose  que $J^{d + 1} \hat{L} = \mathcal{L}^{d + 1}$ est
une sous alg\`ebre de Lie et que $J^{d + 1} : \hat{\mathcal{L}} \rightarrow \mathcal{L}^{d + 1}$ est un isomorphisme. Si $\mathcal{L}^{d + 1}$ est \'egal \`a
$\overline{\mathbb{R}}^{d}_{n}$ ou $\mathfrak{sl}  \overline{\mathbb{R}}^{d}_{n}$, alors $\hat{\mathcal{L}}$ poss\`ede une r\'ealisation $\mathcal{C}^{\infty}$ $\mathcal{L}$ telle
que $T_{\underline{0}} : \mathcal{L} \rightarrow \hat{\mathcal{L}}$ soit un isomorphisme.
\end{corollary}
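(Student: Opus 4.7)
L'approche est d'appliquer directement le Théorème précédent de $d+1$-détermination pour obtenir une conjuguante formelle $\hat{\Phi}$, puis d'en prendre une réalisation $\mathcal{C}^\infty$ via le théorème de Borel classique. Les hypothèses du corollaire étant précisément celles du Théorème, il existe $\hat{\Phi} \in \widehat{\mathrm{Diff}}(\mathbb{R}^n_0)$ tel que $\hat{\Phi}_* \hat{\mathcal{L}} = \mathcal{L}^{d+1}$.

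On prend alors une réalisation $\mathcal{C}^\infty$ $\Phi$ de $\hat{\Phi}$, fournie par le théorème de Borel appliqué composante par composante. Comme la partie linéaire de $\hat{\Phi}$ est inversible (puisque $\hat{\Phi}$ est un difféomorphisme formel), $\Phi$ est bien un germe de difféomorphisme en $\underline{0}$. La réalisation candidate est $\mathcal{L} := (\Phi^{-1})_* \mathcal{L}^{d+1}$. Puisque $\mathcal{L}^{d+1}$ est constituée de champs polynomiaux, donc $\mathcal{C}^\infty$, son image par le difféomorphisme lisse $\Phi^{-1}$ est une sous-algèbre de Lie de $\mathcal{X}_n$ isomorphe comme algèbre de Lie à $\mathcal{L}^{d+1}$.

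Il reste à vérifier que le morphisme $T_{\underline{0}} : \mathcal{L} \to \hat{\mathcal{L}}$ est un isomorphisme. La naturalité du jet Taylorien vis-à-vis du pushforward donne, pour tout champ polynomial $X \in \mathcal{L}^{d+1}$, l'égalité $T_{\underline{0}}((\Phi^{-1})_* X) = (\hat{\Phi}^{-1})_* X$. On en déduit l'identité $T_{\underline{0}} \mathcal{L} = (\hat{\Phi}^{-1})_* \mathcal{L}^{d+1} = \hat{\mathcal{L}}$; l'injectivité de $T_{\underline{0}}$ sur $\mathcal{L}$ résulte de ce que $T_{\underline{0}}$ est trivialement injectif sur les champs polynomiaux, propriété transportée par l'isomorphisme $(\Phi^{-1})_*$.

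Le seul travail substantiel est déjà contenu dans le Théorème précédent, dont la démonstration exploite la linéarisation de Poincaré du champ radial $\hat{R}$ ainsi que la relation $[\hat{R}, \hat{X}] = d\hat{X}$ pour forcer $\hat{\Phi}_* \hat{X}$ à être homogène de degré $d+1$. L'étape de réalisation $\mathcal{C}^\infty$ ne présente pas d'obstacle nouveau: on ne lisse qu'une unique application $\hat{\Phi}$, et la cible $\mathcal{L}^{d+1}$ étant polynomiale, aucune question de convergence ne se pose lors du transport par pushforward.
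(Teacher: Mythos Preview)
Your proof is correct and follows exactly the same route as the paper: use the preceding theorem to obtain $\hat{\Phi}$ with $\hat{\Phi}_*\hat{\mathcal{L}} = \mathcal{L}^{d+1}$, choose a Borel realisation $\Phi$, and set $\mathcal{L} = (\Phi^{-1})_*\mathcal{L}^{d+1}$. The additional details you spell out (naturality of $T_{\underline{0}}$ under pushforward, injectivity via polynomiality of $\mathcal{L}^{d+1}$) are implicit in the paper's one-line argument.
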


Consid\'erons \`a pr\'esent une alg\`ebre de Lie  saturable,  de rang $1$ et de dimension finie:
$\hat{\mathcal{L}} = E. \hat{X} = \langle \hat{X} , \hat{f}_{1}\hat{X} \ldots , \hat{f}_{p}\hat{X} \rangle$, o\`u $\hat{f}_{k} \in \hat{\mathcal{E}}_{n}$.
Ici $\dim \hat{\mathcal{L}} = \dim E = p + 1$ que l'on suppose sup\'erieure ou \'egale \`a $2$. Le fait que $\hat{\mathcal{L}}$ soit une alg\`ebre de Lie implique que le sous
espace vectoriel $E = \langle 1 , \hat{f}_{1} \ldots , \hat{f}_{p} \rangle$ est invariant sous l'action de la d\'erivation $\hat{X}$; ce qui conduit au syst\`eme
diff\'erentiel:

\begin{equation} \mathcal{D} (\hat{X})
\left\{
  \begin{array}{ccc}
   \hat{X} (\hat{f}_{1}) & = & \sum_{j = 0}^{p} \lambda_{1}^{j} \hat{f}_{j}\\
   . &        . & \\
  \hat{X} (\hat{f}_{p}) & = & \sum_{j = 0}^{p} \lambda_{p}^{j} \hat{f}_{j}
  \end{array}
\right.
\end{equation}
o\`u l'on a pos\'e $\hat{f}_{0} = 1$. On distingue plusieurs cas suivant la nature du premier jet non nul du champ $\hat{X}$.

\subsubsection{Alg\`ebres saturables de rang 1 non singuli\`eres}

Dans ce cas le champ formel $\hat{X}$ est non singulier. Il existe, en particulier, $\hat{\Phi}$ appartenant \`a $\widehat{\mathrm{Diff}}(\mathbb{R}^{n}_{0})$ tel que
$\hat{\Phi}_{\ast} \hat{X} = \frac{\partial}{\partial x_{1}}$ et $\hat{\Phi}_{\ast} \hat{\mathcal{L}} = E \circ \hat{\Phi}^{- 1} .
\frac{\partial}{\partial x_{1}} = \langle \frac{\partial}{\partial x_{1}} , \hat{g}_{1} \frac{\partial}{\partial x_{1}} \ldots , \hat{g}_{p} \frac{\partial}{\partial x_{1}} \rangle$
avec $\hat{g}_{i} \circ \hat{\Phi} = \hat{f}_{i}$. Le syst\`eme diff\'erentiel $\mathcal{D} (\hat{\Phi}_{\ast} \hat{X}) = \mathcal{D}(\frac{\partial}{\partial x_{1}})$
devient maintenant un syst\`eme d'\'equations diff\'erentielles ordinaires:

\begin{equation}
\mathcal{D}(\frac{\partial}{\partial x_{1}})
\left\{
  \begin{array}{ccc}
   \frac{\partial \hat{g}_{1}}{\partial x_{1}} & = & \sum_{j = 0}^{p} \lambda_{1}^{j} \hat{g}_{j}\\
   . &        . & \\
  \frac{\partial \hat{g}_{p}}{\partial x_{1}} & = & \sum_{j = 0}^{p} \lambda_{p}^{j} \hat{g}_{j}
  \end{array}
\right.
\end{equation}

Consid\'erons le comme un syst\`eme diff\'erentiel en une variable $x_{1}$. Il poss\`ede un syst\`eme fondamental de solutions $s_{1}$, $\ldots$ , $s_{p}$  ( $s_{k} = ( s_{k}^{l} )$, $l = 1 , \ldots , p$), les $s_{k}^{l}$  \'etant des germes de fonctions analytiques (m\^emes globales) \`a l'origine de $\mathbb{R}$. Il est clair  que les solutions formelles $\hat{g} = ( \hat{g}_{1} , \ldots , \hat{g}_{p} )$ s'\'ecrivent sous la forme:
$$\hat{g} ( x_{1} , \ldots , x_{n} ) = \sum \hat{h}^{j}_{k} ( x_{2} , \ldots , x_{n} ) . s_{j} ( x_{1} )$$
o\`u les $\hat{h}^{j}_{k} \in \hat{\mathcal{E}}_{n - 1}$. Soient $h^{j}_{k} \in \mathcal{E}_{n - 1}$ des r\'ealisations de Borel des $\hat{h}^{j}_{k}$, les composantes   $g_{1}$, $\ldots$ , $g_{p}$ du vecteur $g ( x_{1} , \ldots , x_{n} ) = \sum h^{j}_{k} ( x_{2} , \ldots , x_{n} ) . s_{j} ( x_{1} )$ sont $\mathcal{C}^{\infty}$ et solution du syst\`eme $\mathcal{D}(\frac{\partial}{\partial x_{1}})$. Une v\'erification \'el\'ementaire montre que
$\mathcal{L}^{'} = \langle \frac{\partial}{\partial x_{1}} , g_{1} \frac{\partial}{\partial x_{1}} \ldots , g_{p} \frac{\partial}{\partial x_{1}} \rangle$ est une alg\`ebre de Lie dont le jet Taylorien infini est $\hat{\Phi}_{\ast} \hat{\mathcal{L}}$. En consid\'erant une r\'ealisation $\mathcal{C}^{\infty}$ de $\hat{\Phi}$
on obtient trivialement une alg\`ebre de Lie, $\mathcal{L} = \Phi^{- 1}_{\ast} \mathcal{L}^{'}$, telle que $T_{\underline{0}} :  \mathcal{L} \rightarrow \hat{\mathcal{L}}$ soit un isomorphisme. D'o\`u la:

\begin{proposition} Soit $\hat{\mathcal{L}} = E . \hat{X} \subset \hat{\mathcal{X}}_{n}$ une alg\`ebre saturable de rang $1$. Si $\hat{X}$ est non singulier, alors il existe une sous alg\`ebre $\mathcal{L} \subset \mathcal{X}_{n}$  telle que $T_{\underline{0}} :  \mathcal{L} \rightarrow \hat{\mathcal{L}}$ est un isomorphisme.
\end{proposition}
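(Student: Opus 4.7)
My plan is first to reduce, via the formal flow-box theorem, to the normal form $\hat{X} = \partial/\partial x_1$: since $\hat{X}$ is non-singular, there exists $\hat{\Phi} \in \widehat{\mathrm{Diff}}(\mathbb{R}^{n}_{0})$ with $\hat{\Phi}_{\ast} \hat{X} = \partial/\partial x_1$. Setting $\hat{g}_i := \hat{f}_i \circ \hat{\Phi}^{-1}$, the invariance of $E' := \langle 1, \hat{g}_1, \ldots, \hat{g}_p\rangle$ under $\partial/\partial x_1$ translates into a system of linear ODEs with real constant coefficient matrix $M = (\lambda_i^j)_{i,j=1,\ldots,p}$ and constant inhomogeneity $\lambda^0$, namely $\partial_{x_1} \hat{g}_i = \sum_{j=0}^p \lambda_i^j \hat{g}_j$ with $\hat{g}_0 = 1$.

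Next, I treat this as a linear ODE in $x_1$ parametrized by $x' := (x_2, \ldots, x_n)$. I pick an analytic fundamental system $s_1, \ldots, s_p$ (entire vector functions of $x_1$) of the homogeneous part, together with an analytic particular solution $s_0$; every formal solution then has the canonical form $\hat{g}(x_1, x') = s_0(x_1) + \sum_{j=1}^p \hat{h}^j(x') s_j(x_1)$ with $\hat{h}^j \in \hat{\mathcal{E}}_{n-1}$. Applying the classical Borel theorem to each $\hat{h}^j$ produces $\mathcal{C}^{\infty}$ realizations $h^j \in \mathcal{E}_{n-1}$, and then $g(x_1, x') := s_0(x_1) + \sum_j h^j(x') s_j(x_1)$ is a $\mathcal{C}^{\infty}$ solution of the ODE satisfying $T_{\underline{0}} g_i = \hat{g}_i$ for each component.

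The crux is to show that $\mathcal{L}' := \langle \partial/\partial x_1, g_1\, \partial/\partial x_1, \ldots, g_p\, \partial/\partial x_1 \rangle$ is a Lie subalgebra, which amounts to the closure relation $g_i \partial_{x_1} g_j - g_j \partial_{x_1} g_i \in \langle 1, g_1, \ldots, g_p\rangle_{\mathbb{R}}$. My plan is to expand $g_i$ in the basis $\{s_0, s_1, \ldots, s_p\}$: the bracket becomes a polynomial expression in the $h^j$'s whose coefficients are analytic functions of $x_1$ built from products and derivatives of the $s_k$. The same expansion performed on the formal side yields the identity $\hat{g}_i \partial_{x_1} \hat{g}_j - \hat{g}_j \partial_{x_1} \hat{g}_i = \sum_l c_{ij}^l \hat{g}_l$ as an identity of polynomials in the $\hat{h}^j$; matching the coefficients of each monomial $\hat{h}^k \hat{h}^l$ and $\hat{h}^k$ separately forces universal algebraic relations on the analytic $s_k$'s and $s_0$ alone. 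Since these relations involve only the $s_k$'s and not the specific coefficients $\hat{h}^j$, they persist when one substitutes $h^j$ for $\hat{h}^j$, and the bracket identity then holds exactly for the $\mathcal{C}^\infty$ functions $g_i$. This coefficient-matching is where I expect the bookkeeping to be heaviest, although it is elementary in nature and not a genuine analytic obstacle.

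Finally, any $\mathcal{C}^{\infty}$ Borel realization $\Phi$ of $\hat{\Phi}$ transfers $\mathcal{L}'$ back to $\mathcal{L} := \Phi^{-1}_{\ast} \mathcal{L}' \subset \mathcal{X}_{n}$, and by construction $T_{\underline{0}} : \mathcal{L} \to \hat{\mathcal{L}}$ is an isomorphism of Lie algebras.
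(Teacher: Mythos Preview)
Your overall strategy---formal flow box, reduce to a constant-coefficient linear ODE in $x_1$, express solutions via an analytic fundamental system with coefficients $\hat h^j(x')\in\hat{\mathcal E}_{n-1}$, apply Borel to the $\hat h^j$, then push back by a realization of $\hat\Phi$---is exactly the paper's. The paper dismisses the Lie-algebra closure of $\mathcal L'$ as ``une v\'erification \'el\'ementaire''; you try to supply that verification, and this is where your argument breaks.

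Your coefficient-matching step assumes the monomials $1,\hat h^k,\hat h^k\hat h^l$ are linearly independent (over the analytic functions of $x_1$), so that an identity among the $\hat g_i$ forces identities among the $s_k$ alone. But the $\hat h^j$ are specific formal series and may well satisfy polynomial relations. Concretely, take $n=2$ and $\hat{\mathcal L}'=\langle \partial_{x_1},\,(x_1+x_2)\,\partial_{x_1},\,(x_1+x_2)^2\,\partial_{x_1}\rangle$. The ODE is $\partial_{x_1}g_1=1$, $\partial_{x_1}g_2=2g_1$, with general solution $g_1=x_1+h_1(x_2)$, $g_2=x_1^2+2h_1x_1+h_2(x_2)$; the formal data give $\hat h_1=x_2$, $\hat h_2=x_2^2$, so $\hat h_2=\hat h_1^{\,2}$. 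One computes $[g_1\partial_{x_1},g_2\partial_{x_1}]=\bigl(g_2+2(h_1^2-h_2)\bigr)\partial_{x_1}$, and this lies in $\langle 1,g_1,g_2\rangle_{\mathbb R}$ only if $h_1^2-h_2$ is a \emph{constant}. An arbitrary Borel realization (e.g.\ $h_1=x_2$, $h_2=x_2^2+e^{-1/x_2^2}$) destroys this, so $\mathcal L'$ is not a Lie algebra. The identity you hoped to extract from the $s_k$ alone is really the constraint $H_1^2=H_2$ on the initial data.

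The fix is not to take arbitrary Borel realizations of the $\hat h^j$ but to choose them compatibly with the algebraic constraints the Lie-algebra structure imposes. Writing the closure conditions $G_i\partial_{x_1}G_j-G_j\partial_{x_1}G_i=\sum_l c_{ij}^{\,l}G_l$ with $G_i=s_0^i+\sum_j H_j s_j^i$ gives, after expanding in $x_1$, a finite system of real polynomial equations $P_\nu(H)=0$; the formal $\hat h$ satisfies $P_\nu(\hat h(x'))=0$. Tougeron's theorem (Th\'eor\`eme~29 in the paper) then produces $h\in\mathcal E_{n-1}^{\,p}$ with $T_{\underline 0}h=\hat h$ and $P_\nu(h(x'))\equiv 0$, and for this choice $\mathcal L'$ is genuinely a Lie subalgebra. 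With that amendment your proof goes through and coincides with the paper's (which, to be fair, suppresses precisely this point).
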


\subsubsection{Alg\`ebres saturables de rang 1 \`a 1-jet nul}

Ici $J^{1} \hat{X}$ est identiquement nul. Dans ce cas pour tout \'el\'ement $\hat{Z}$ de $\hat{\mathcal{L}}$ le jet d'ordre 1, $J^{1} \hat{Z}$, de $\hat{Z}$ est nul.
Ceci implique que l'application adjointe $ad_{\hat{Z}} : \hat{\mathcal{L}} \rightarrow \hat{\mathcal{L}}$ est nilpotente. En effet les valeurs propres de  $ad_{\hat{Z}}$ sont n\'ecessairement nulles. Par suite l'alg\`ebre de Lie $\hat{\mathcal{L}}$ est nilpotente et poss\`ede donc un centre non trivial $\hat{\mathcal{C}} \subset \hat{\mathcal{L}} =  \langle \hat{X} , \hat{f}_{1} \hat{X} , \ldots , \hat{f}_{p} \hat{X} \rangle$. Si $\hat{X}$ est dans le centre, alors $[\hat{X} , \hat{f}_{i} \hat{X} ] = 0$ pour tout $i$, et donc $\hat{X} (f_{i}) = 0$, pour $i = 1 , \ldots, p$. Il en r\'esulte que $\hat{\mathcal{L}}$ est ab\'elienne et que $\hat{X}$ poss\`ede une int\'egrale premi\`ere non constante si $\dim \hat{\mathcal{L}} \geq 2$. Si $\hat{X}$ n'est pas dans $\hat{\mathcal{C}}$, alors il existe un \'el\'ement non constant de $\hat{\mathcal{E}}_{n}$ tel que $\hat{f} \hat{ X}$ appartient \`a $\hat{\mathcal{C}}$. Comme $\hat{f} \hat{ X}$ et $\hat{ X}$ commutent, $\hat{f}$ est une int\'egrale premi\`ere non constante, par suite de $0 =  [\hat{f} \hat{X} , \hat{f}_{i} \hat{X}] = \hat{X} (\hat{f}_{i}). \hat{X}$ on d\'eduit que chaque $\hat{f}_{i}$ est une int\'egrale premi\`ere de $\hat{X}$. Ceci conduit au fait que $\hat{X}$ appartient \`a $\hat{\mathcal{C}}$ en contradiction avec l'hypoth\`ese. Donc $\hat{\mathcal{L}}$ est ab\'elienne et on a la:

\begin{proposition} Soit $\hat{\mathcal{L}} = E . \hat{X} \subset \hat{\mathcal{X}}_{n}$ une alg\`ebre saturable de rang $1$ et de dimension finie. Si $J^{1} \hat{X} = 0$ et $\dim \hat{\mathcal{L}} > 1$, alors $\hat{\mathcal{L}}$ est ab\'elienne et le champ $\hat{X}$ poss\`ede une int\'egrale premi\`ere non constante.
\end{proposition}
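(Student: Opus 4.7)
The plan is to follow the dichotomy suggested by whether $\hat{X}$ lies in the center of $\hat{\mathcal{L}}$, after first establishing that such a center exists. I would organise the argument in three stages.

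\textbf{Stage 1: nilpotency of $\hat{\mathcal{L}}$.} First I would verify that every $\hat{Z}\in\hat{\mathcal{L}}$ has vanishing 1-jet: writing $\hat{Z}=\hat{g}\hat{X}$ with $\hat{X}$ of order $\geq 2$ at $\underline{0}$, the product $\hat{g}\hat{X}$ again vanishes to order $\geq 2$. Then, for $\hat{Z}_1=\hat{g}_1\hat{X}$ and $\hat{Z}_2=\hat{g}_2\hat{X}$ in $\hat{\mathcal{L}}$, a direct computation gives
$$[\hat{Z}_1,\hat{Z}_2]=\bigl(\hat{g}_1\hat{X}(\hat{g}_2)-\hat{g}_2\hat{X}(\hat{g}_1)\bigr)\hat{X},$$
and since $\hat{X}$ raises the vanishing order by at least one, this bracket has strictly higher vanishing order than $\hat{Z}_2$. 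Consequently, on the finite-dimensional space $\hat{\mathcal{L}}$ the endomorphism $ad_{\hat{Z}_1}$ can have no nonzero eigenvalue, hence is nilpotent. Engel's theorem then gives that $\hat{\mathcal{L}}$ is nilpotent, and in particular its center $\hat{\mathcal{C}}$ is non-trivial.

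\textbf{Stage 2: the case $\hat{X}\in\hat{\mathcal{C}}$.} Here for each $i$ we have $0=[\hat{X},\hat{f}_i\hat{X}]=\hat{X}(\hat{f}_i)\hat{X}$, and since $\hat{X}\neq 0$ in the integral domain $\hat{\mathcal{E}}_n$, this forces $\hat{X}(\hat{f}_i)=0$. Substituting this back into the bracket formula of Stage~1 shows that any two generators of $\hat{\mathcal{L}}$ commute, so $\hat{\mathcal{L}}$ is abelian. Moreover, since $\dim\hat{\mathcal{L}}>1$ one of the $\hat{f}_i$ is non-constant, and it is a first integral of $\hat{X}$, as required.

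\textbf{Stage 3: ruling out $\hat{X}\notin\hat{\mathcal{C}}$.} Since $\hat{\mathcal{C}}\neq 0$ and $\hat{X}\notin\hat{\mathcal{C}}$, there exists a non-constant $\hat{f}\in E$ with $\hat{f}\hat{X}\in\hat{\mathcal{C}}$. From $[\hat{f}\hat{X},\hat{X}]=0$ I obtain $\hat{X}(\hat{f})=0$, so $\hat{f}$ is a non-constant first integral of $\hat{X}$. Then from $0=[\hat{f}\hat{X},\hat{f}_i\hat{X}]=\hat{f}\,\hat{X}(\hat{f}_i)\hat{X}$ and the integrality of $\hat{\mathcal{E}}_n$ I conclude $\hat{X}(\hat{f}_i)=0$ for each $i$. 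But then every generator of $\hat{\mathcal{L}}$ commutes with $\hat{X}$, placing $\hat{X}$ in $\hat{\mathcal{C}}$ --- a contradiction. Hence only Stage~2 occurs, which is exactly the claim.

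The main technical point I expect to handle carefully is the nilpotency argument at the start: one has to check that the filtration by vanishing order is strictly increased by bracketing against an element of $\hat{\mathcal{L}}$, and this relies crucially on the hypothesis $J^1\hat{X}=0$. Once this is in place, the rest of the proof is a clean case split based on the position of $\hat{X}$ with respect to the center.
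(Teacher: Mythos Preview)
Your proposal is correct and follows essentially the same route as the paper: nilpotency of $\hat{\mathcal{L}}$ via the order-raising property of $ad_{\hat Z}$ (Engel), then the dichotomy on whether $\hat X$ lies in the center, with Stage~3 reaching the same contradiction. The only cosmetic difference is that you spell out the order computation for the bracket, whereas the paper simply asserts that the eigenvalues of $ad_{\hat Z}$ vanish because $J^1\hat Z=0$.
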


\begin{remark} Si le champ poss\`ede une int\'egrale premi\`ere non constante $\hat{f}_{0}$ on peut supposer que $\hat{f}_{0}(0) = 0$ dans le cas saturable. Chaque \'el\'ement $\hat{l}$ de $\hat{\mathcal{E}}_{1}$ produit une int\'egrale premi\`ere $\hat{l}(\hat{f}_{0})$. En particulier l'espace vectoriel $\hat{\mathcal{L}} = \{ \hat{l}(\hat{f}_{0}) \hat{X} / \hat{l} \in \hat{\mathcal{E}}_{1} \}$ est une alg\`ebre de Lie ab\'elienne de dimension infinie.
\end{remark}

Nous allons traiter le cas sp\'ecifique, en  dimension d'espace $2$, des alg\`ebres saturables. L'avantage de la dimension $2$ est cons\'equence d'un r\'esultat de J.-F. Mattei et R. Moussu \cite{Mattei}  qui dit que si  $\hat{X}$ est un champ non nul, appartenant \`a $\hat{\mathcal{X}}_{2}$, ayant une int\'egrale premi\`ere formelle non constante, alors l'anneau des ses int\'egrales premi\`eres formelles $\hat{\mathcal{A}} ( \hat{X} ) \subset \hat{\mathcal{E}}_{2}$ est engendr\'e par un \'el\'ement. Plus pr\'ecisement il existe $\hat{f}_{0}$
dans $\hat{\mathcal{E}}_{2}$, dit minimal, d\'efini \`a composition \`a gauche pr\`es par les \'el\'ements de $\widehat{\mathrm{Diff}}_{1} (\mathbb{R}_{0})$, tel que: $$\hat{\mathcal{A}} ( \hat{X} ) = \{ \hat{l} ( \hat{f}_{0} ) / \hat{l} \in \hat{\mathcal{E}}_{1} \}$$

Cet \'enonc\'e est \'etabli dans \cite{Mattei} dans les cadres complexes formel et holomorphe, mais il s'adapte assez facilement au cas r\'eel. On se propose d'\'etablir la:

\begin{proposition} Soit $\hat{\mathcal{L}} = E . \hat{X} \subset \hat{\mathcal{X}}_{2}$ une sous alg\`ebre saturable de rang $1$ et $\dim \hat{\mathcal{L}} > 1$. On suppose que  $\hat{\mathcal{L}}$ est ab\'elienne et que le champ $\hat{X}$ poss\`ede une int\'egrale premi\`ere non constante. Alors  il existe une sous alg\`ebre $\mathcal{L} \subset \mathcal{X}_{2}$  telle que $T_{\underline{0}} :  \mathcal{L} \rightarrow \hat{\mathcal{L}}$ soit un isomorphisme.
\end{proposition}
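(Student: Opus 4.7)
The plan is to reduce the proposition to producing a single \emph{compatible} pair $(X,f_{0})\in\mathcal{X}_{2}\times\mathcal{E}_{2}$ with $T_{\underline{0}}X=\hat{X}$, $T_{\underline{0}}f_{0}=\hat{f}_{0}$ and $X(f_{0})\equiv 0$ on a neighbourhood of $\underline{0}$. Granted such a pair, the real version of Mattei-Moussu just invoked writes every formal first integral of $\hat{X}$ as $\hat{l}(\hat{f}_{0})$; since $\hat{\mathcal{L}}$ is abelian each $\hat{f}_{i}$ is a first integral of $\hat{X}$, so $\hat{f}_{i}=\hat{l}_{i}(\hat{f}_{0})$ for some $\hat{l}_{i}\in\hat{\mathcal{E}}_{1}$. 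Let $l_{i}\in\mathcal{E}_{1}$ be Borel realizations of $\hat{l}_{i}$ and set
\[
\mathcal{L}=\langle X,\,l_{1}(f_{0})X,\ldots,l_{p}(f_{0})X\rangle\subset\mathcal{X}_{2}.
\]
The identity $X(l_{i}(f_{0}))=l_{i}'(f_{0})X(f_{0})=0$ makes every bracket $[l_{i}(f_{0})X,l_{j}(f_{0})X]$ vanish, so $\mathcal{L}$ is abelian. Moreover $T_{\underline{0}}(l_{i}(f_{0})X)=\hat{l}_{i}(\hat{f}_{0})\hat{X}=\hat{f}_{i}\hat{X}$, and the $\mathbb{R}$-linear independence of $\{\hat{X},\hat{f}_{1}\hat{X},\ldots,\hat{f}_{p}\hat{X}\}$ ensures that $T_{\underline{0}}\colon\mathcal{L}\to\hat{\mathcal{L}}$ is an isomorphism.

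The heart of the argument is therefore the construction of the pair $(X,f_{0})$, which I would handle by flat correction. Start with Borel realizations $f_{0}\in\mathcal{E}_{2}$ of $\hat{f}_{0}$ and $X_{1}\in\mathcal{X}_{2}$ of $\hat{X}$; the function $\varphi:=X_{1}(f_{0})$ is smooth and flat at $\underline{0}$ because its Taylor jet is $\hat{X}(\hat{f}_{0})=0$. It suffices to produce a flat vector field $Z=z_{1}\partial_{x_{1}}+z_{2}\partial_{x_{2}}$ solving the division problem
\[
z_{1}\,\partial_{x_{1}}f_{0}+z_{2}\,\partial_{x_{2}}f_{0}=-\varphi,
\]
for then $X:=X_{1}+Z$ has the right Taylor jet and satisfies $X(f_{0})=0$. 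Away from $\{df_{0}=\underline{0}\}$ the explicit formula $z_{i}=-\varphi\,\partial_{x_{i}}f_{0}/\alpha$, with $\alpha=(\partial_{x_{1}}f_{0})^{2}+(\partial_{x_{2}}f_{0})^{2}$, defines a smooth solution, and the whole difficulty is the smooth and flat extension of the $z_{i}$ across this zero set.

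The main obstacle is precisely this extension. Writing $\hat{v}=\mathrm{pgcd}(\partial_{x_{1}}\hat{f}_{0},\partial_{x_{2}}\hat{f}_{0})$, the saturability of $\hat{X}$ together with $\hat{X}(\hat{f}_{0})=0$ forces the formal Hamiltonian $\hat{Y}_{0}=\partial_{x_{2}}\hat{f}_{0}\,\partial_{x_{1}}-\partial_{x_{1}}\hat{f}_{0}\,\partial_{x_{2}}$ to factor as $\hat{Y}_{0}=\hat{u}\,\hat{v}\,\hat{X}$ for some unit $\hat{u}$, so that the locus of obstruction is exactly $\{\hat{v}=0\}$; Mattei-Moussu minimality of $\hat{f}_{0}$ restricts this locus to either the point $\underline{0}$ alone or a formal curve of controlled type. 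A flat prenormalisation of $f_{0}$ making the smooth partials divisible by a realization $v\in\mathcal{E}_{2}$ of $\hat{v}$, combined with a \L ojasiewicz-type lower bound on $\alpha$ off its zero set, then reduces the extension to Hadamard's lemma and smooth division of flat functions in two variables; together with the flatness of $\varphi$ this yields $|z_{i}|=O(\|x\|^{N})$ for every $N$, so the $z_{i}$ are smooth and flat at $\underline{0}$, producing the pair $(X,f_{0})$ and closing the proof.
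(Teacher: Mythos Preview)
Your reduction in the first paragraph is exactly the paper's: once a compatible pair $(X,f_{0})$ with $X(f_{0})=0$ is in hand, Mattei--Moussu writes $\hat{f}_{i}=\hat{l}_{i}(\hat{f}_{0})$, and $\mathcal{L}=\langle X,\,l_{1}(f_{0})X,\ldots,l_{p}(f_{0})X\rangle$ does the job. The divergence is in how the pair is built.

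The paper does \emph{not} correct an arbitrary realization. It decomposes $\hat{f}_{0}$ into irreducible (real and complex-conjugate) factors, realizes each factor separately, and forms the logarithmic $1$-form $\omega_{0}$ from these realizations. The dual field $X_{0}$ then satisfies $X_{0}(f_{0})=0$ \emph{identically by construction}, because $df_{0}$ is a smooth multiple of $\omega_{0}$. Since $\hat{\omega}_{0}$ has algebraically isolated singularity, $\hat{X}=\hat{h}\hat{X}_{0}$, and $X=hX_{0}$ finishes. No division, no flat correction.

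Your flat-correction route has a genuine gap at the step you yourself flag as the ``main obstacle''. With an \emph{arbitrary} Borel realization $f_{0}$, the function $\alpha=(\partial_{x_{1}}f_{0})^{2}+(\partial_{x_{2}}f_{0})^{2}$ is only $\mathcal{C}^{\infty}$; \L ojasiewicz inequalities require analyticity (or at least subanalyticity), and nothing prevents the smooth critical set of $f_{0}$ from being a wild closed set accumulating on $\underline{0}$, so the quotient $\varphi\,\partial_{x_{i}}f_{0}/\alpha$ need not extend smoothly. The phrase ``flat prenormalisation of $f_{0}$'' is doing unearned work: to control $\{df_{0}=0\}$ you would already have to choose $f_{0}$ with some algebraic structure, which is precisely what the paper's factorization provides. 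If you want to salvage the correction idea, the honest route is to conjugate $\hat{f}_{0}$ to a polynomial $P$ via Cerveau--Mattei, use Malgrange's theorem that the ideal $(\partial_{x_{1}}P,\partial_{x_{2}}P)$ is closed to realize the relation $\hat{Y}\cdot P=0$, and then pull back; this is the content of Th\'eor\`eme~31 later in the paper, not the \L ojasiewicz/Hadamard sketch you give.
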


\begin{proof}
Soit $\hat{f}_{0}$ une int\'egrale premi\`ere minimale de $\hat{X}$. La d\'ecomposition complexe en facteurs irr\'eductibles de $\hat{f}_{0}$ est du type:

$$\hat{f}_{0} = \hat{f}_{1}^{n_{1}} \ldots \hat{f}_{k}^{n_{k}} (\hat{f}_{k + 1} \overline{\hat{f}}_{k + 1} )^{n_{k + 1}} \ldots (\hat{f}_{p} \overline{\hat{f}}_{p} )^{n_{p}}$$
 avec $n_{i} \in \mathbb{N}$, $\hat{f}_{i} \in \hat{\mathcal{E}}_{2}$ pour $i = 1 , \ldots , k$ et $\hat{f}_{i} \in \hat{\mathcal{O}}_{2} \setminus  \hat{\mathcal{E}}_{2}$ pour $i = k + 1 , \ldots , p$ o\`u $\hat{\mathcal{O}}_{n}$ d\'esigne l'anneau des s\'eries formelles en $n$ variables complexes et $\overline{\hat{f}}_{i}$  le conjugu\'e de $\hat{f}_{i}$. Notons $\hat{\omega}_{0}$ la 1-forme d\'efinie par:
$$\hat{\omega}_{0} = \hat{f}_{1}^{n_{1}} \ldots \hat{f}_{k}^{n_{k}} \hat{f}_{k + 1} \overline{\hat{f}}_{k + 1}  \ldots \hat{f}_{p} \overline{\hat{f}}_{p} ( \sum_{i = 1}^{k} n_{i}\frac{d \hat{f}_{i}}{\hat{f}_{i}} + \sum_{i = k + 1}^{p} n_{i} (\frac{d \hat{f}_{i}\overline{\hat{f}}_{i}}{\hat{f}_{i}\overline{\hat{f}}_{i}} )).$$

Cette 1-forme, \`a priori complexe, est visiblement r\'eelle et \`a singularit\'e alg\'ebriquement isol\'ee. Soit $\hat{X}_{0}$ un champ de vecteurs dual de $\hat{\omega}_{0}$: $\hat{\omega}_{0} = i_{\hat{X}_{0}} d x_{1} \wedge d x_{2}$. L'alg\`ebre de Lie $\hat{\mathcal{L}} = E. \hat{X}$ est du type $\hat{\mathcal{L}} = \langle \hat{X} , \hat{l}_{1}(\hat{f}_{0}) \hat{X} , \ldots , \hat{l}_{p}(\hat{f}_{0}) \hat{X} \rangle$. Consid\'erons des r\'ealisations $\mathcal{C}^{\infty}$ de $\hat{f}_{1} , \ldots , \hat{f}_{k} , \hat{g}_{k + 1} = \hat{f}_{k + 1} \overline{\hat{f}}_{k + 1} , \ldots , \hat{g}_{p} = \hat{f}_{p} \overline{\hat{f}}_{p}$ not\'ees respectivement $f_{1} , \ldots , f_{k} , g_{k + 1} , \ldots , g_{p}$. Ces r\'ealisations induisent une r\'ealisation $\mathcal{C}^{\infty}$ de $\hat{X}_{0}$, not\'ee $X_{0}$, poss\'edant l'int\'egrale premi\`ere \\
 $f_{0} = f_{1}^{n_{1}} \ldots f_{k}^{n_{k}} g_{k + 1}^{n_{k + 1}} \ldots g_{p}^{n_{p}}$. Comme $\hat{\omega}_{0}$ est \`a singularit\'e isol\'ee il existe $\hat{h} \in \hat{\mathcal{E}}_{2}$ tel que $\hat{X} = \hat{h} \hat{X}_{0}$. Le choix d'une r\'ealisation $h$ de $\hat{h}$ en produit une, $X = h X_{0}$, pour $\hat{X}$. Consid\'erons des r\'ealisations $l_{1} , \ldots , l_{p}$ de $\hat{l}_{1} , \ldots , \hat{l}_{p}$ respectivement. L'alg\`ebre de Lie $\mathcal{L} = \langle X , l_{1}(f_{0}) X , \ldots , l_{p}(f_{0}) X \rangle$ est une r\'ealisation ab\'elienne de $\hat{\mathcal{L}}$ telle que $T_{\underline{0}}: \mathcal{L} \rightarrow \hat{\mathcal{L}}$ est un isomorphisme. 
\end{proof}

\subsubsection{Alg\`ebres saturables de rang 1 et \`a 1-jet non  nul}

On \'ecrit $\hat{\mathcal{L}} = \langle \hat{X} , \hat{f}_{1} \hat{X} , \ldots , \hat{f}_{p} \hat{X}_{p} \rangle$ avec $\hat{X}(0) = 0$ et $J^{1} \hat{X} \neq 0$. Ici encore l'espace $E = \langle  1 , \hat{f}_{1} , \ldots , \hat{f}_{p} \rangle$ est invariant par la d\'erivation $\hat{X}$. On se place  en dimension 2; nous distinguons les diff\'erents types de Jordan pour le 1-jet $X_{1}$ de $\hat{X}$ \`a conjugaison lin\'eaire pr\`es:
\begin{enumerate}
\item  $\lambda_{1} x_{1}\frac{\partial}{\partial x_{1}} + \lambda_{2} x_{1}\frac{\partial}{\partial x_{2}}$ avec $(\lambda_{1} , \lambda_{2}) \neq (0 , 0)$ (cas diagonal r\'eel).
\item $(\alpha x_{1} - \beta x_{2})\frac{\partial}{\partial x_{1}} + (\beta x_{1} + \alpha x_{2}) \frac{\partial}{\partial x_{2}}$ o\`u $(\alpha , \beta) \neq (0 , 0)$ (cas diagonal complexe).
\item $(\lambda x_{1} + x_{2})\frac{\partial}{\partial x_{1}} + \lambda x_{2} \frac{\partial}{\partial x_{2}}$ (cas non semi-simple).
\end{enumerate}
Le champ $\hat{X}$ est formellement conjugu\'e \`a sa partie lin\'eaire dans les cas suivants:
\begin{itemize}
\item Cas 1 sans r\'esonnance, i.e. $i_{1} \lambda_{1} + i_{2} \lambda_{2} \neq \lambda_{j}$ pour tout couple $(i_{1}, i_{2})$ d'entiers, $i_{1} + i_{2} \geq 2$ et $j \in \{ 1 , 2 \}$.
\item Cas 2 avec $\alpha \neq 0$. On observe ici que les valeurs propres $\alpha \pm i \beta$ sont complexes et sans r\'esonnances lorsque $\beta \neq 0$.
\item Cas 3 avec $\lambda \neq 0$.
\end{itemize}

{\bf 1.2.3.1 R\'ealisation $\mathcal{C}^{\infty}$ dans les cas semi-simples non r\'esonnants}\\

Ils correspondent aux cas diagonal r\'eel sans r\'esonnance et  diagonal complexe avec $\alpha \neq 0$. Dans le premier cas, \`a conjugaison formelle pr\`es, on suppose que $\hat{X} = X_{1} = J^{1} \hat{X}$. Le champ $X_{1}$ \'etant semi-simple, il l'est en tant que d\'erivation et sa restriction \`a $E$ l'est aussi. On peut donc supposer que les $\hat{f}_{j}$ forment une base de vecteurs propres de $\hat{X}$: $\hat{X} \hat{f}_{j} = \mu_{j} \hat{f}_{j}$, $\mu_{j} \in \mathbb{R}$.

Si $x_{1}^{i_{1}} x_{2}^{i_{2}}$ est un mon\^ome apparaissant avec un coefficient non nul dans $\hat{f}_{j}$, alors on a:

\begin{equation}
i_{1} \lambda_{1} + i_{2} \lambda_{2} = \mu_{j}.
\end{equation}

\begin{lemma} Si $\lambda_{1}$ et $\lambda_{2}$ sont non r\'esonnants, l'ensemble $\Lambda$ des couples $(i_{1}, i_{2}) \in \mathbb{N}^{2}$ satisfaisant $(3)$ est fini.
\end{lemma}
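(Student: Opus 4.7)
L'approche consistera à exploiter la non-résonnance pour contraindre suffisamment l'arithmétique de $\lambda_1, \lambda_2$ afin de borner les solutions de l'équation $i_1 \lambda_1 + i_2 \lambda_2 = \mu_j$ dans $\mathbb{N}^2$.

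D'abord, je relèverai que $\lambda_1$ et $\lambda_2$ sont non nuls, puisque sinon, par exemple pour $\lambda_1 = 0$, le couple $(i_1, i_2) = (2, 0)$ fournit $2 \lambda_1 = 0 = \lambda_1$, en contradiction avec la non-résonnance. L'\'etape-cl\'e sera ensuite de montrer que l'équation homog\`ene $i_1 \lambda_1 + i_2 \lambda_2 = 0$ n'admet aucune solution dans $\mathbb{N}^2 \setminus \{(0, 0)\}$. En effet, puisque $\lambda_k \neq 0$, une telle solution v\'erifierait n\'ecessairement $i_1, i_2 \geq 1$; ajouter $(1, 0)$ produirait alors $(i_1 + 1) \lambda_1 + i_2 \lambda_2 = \lambda_1$ avec $(i_1 + 1) + i_2 \geq 3$, ce qui contredit la non-r\'esonnance pour $j = 1$.

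De l\`a se d\'eduit une dichotomie sur les signes de $\lambda_1, \lambda_2$. Si les deux sont de m\^eme signe, disons positifs, alors l'\'equation $i_1 \lambda_1 + i_2 \lambda_2 = \mu_j$ impose $0 \leq i_k \leq \mu_j / \lambda_k$ (et $\Lambda = \emptyset$ si $\mu_j < 0$), donc $\Lambda$ est fini. S'ils sont de signes contraires, alors $\lambda_1 / \lambda_2 \notin \mathbb{Q}$ \textemdash\ car une relation rationnelle fournirait, apr\`es r\'eduction, une \'egalit\'e enti\`ere $p \lambda_1 + q \lambda_2 = 0$ avec $p, q \geq 1$, exclue par l'\'etape pr\'ec\'edente \textemdash\ d'o\`u la $\mathbb{Q}$-ind\'ependance lin\'eaire de $\lambda_1, \lambda_2$; deux solutions distinctes $(a_1, a_2), (b_1, b_2) \in \Lambda$ donneraient alors la relation enti\`ere homog\`ene non triviale $(a_1 - b_1) \lambda_1 + (a_2 - b_2) \lambda_2 = 0$, impossible, et $|\Lambda| \leq 1$.

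L'argument est purement arithm\'etique et ne pr\'esente pas d'obstacle technique s\'erieux. La seule subtilit\'e, assez classique, consiste \`a transformer une relation homog\`ene en une r\'esonnance par translation de $(1, 0)$, afin d'interdire toute relation enti\`ere non triviale \`a somme nulle; c'est cette \'etape qui permet le passage de la non-r\'esonnance (formul\'ee sur les valeurs $\lambda_j$) \`a une contrainte utile pour le niveau arbitraire $\mu_j$.
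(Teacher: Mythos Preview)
Your argument is correct. Both your proof and the paper's hinge on the same key observation: a nontrivial relation $m_{1}\lambda_{1}+m_{2}\lambda_{2}=0$ with $m_{1},m_{2}\in\mathbb{N}$ would, after translation by $(1,0)$, produce a resonance $(m_{1}+1)\lambda_{1}+m_{2}\lambda_{2}=\lambda_{1}$. The organization differs, however. The paper proceeds uniformly: it fixes the lexicographically minimal element $(k_{1},k_{2})\in\Lambda$, notes that any other $(i_{1},i_{2})\in\Lambda$ gives $(i_{1}-k_{1})\lambda_{1}+(i_{2}-k_{2})\lambda_{2}=0$ with $i_{1}>k_{1}$, and then uses the translation trick to exclude $i_{2}>k_{2}$, forcing $i_{2}<k_{2}$ and hence finiteness. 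Your route splits on the signs of $\lambda_{1},\lambda_{2}$: same sign gives an immediate bound, while opposite signs force $\lambda_{1}/\lambda_{2}\notin\mathbb{Q}$ and hence $|\Lambda|\leq 1$. Your approach is slightly longer but yields the sharper bound $|\Lambda|\leq 1$ in the opposite-sign case, which the paper does not extract; the paper's lex-minimal argument is more compact and avoids any case distinction.
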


\begin{proof} 
On fixe $(k_{1} , k_{2}) \in \Lambda$ r\'ealisant le minimum pour l'ordre lexicographique. Si $(i_{1} , i_{2})$ appartient \`a $\Lambda$ et $(i_{1} , i_{2}) \neq (k_{1} , k_{2})$ on a:
\begin{equation}
(i_{1} - k_{1}) \lambda_{1} + (i_{2} - k_{2}) \lambda_{2} = 0.
\end{equation}

Comme $(\lambda_{1} , \lambda_{2})$ est un couple non r\'esonnant alors $i_{1} \neq k_{1}$ et $i_{2} \neq k_{2}$. Par suite $i_{1} > k_{1}$ et $i_{2} \neq k_{2}$. Si $i_{2}$ \'etait strictement sup\'erieur \`a $k_{2}$, alors $(i_{1} - k_{1})$ et $(i_{2} - k_{2})$ seraient positifs; ce qui cr\'eerait une r\'esonnance. Donc $i_{2}$ est strictement inf\'erieur \`a $k_{2}$ et ne peut donc prendre qu'un nombre fini de valeurs. Puisque $i_{1}$ est d\'etermin\'e par $i_{2}$ le lemme est v\'erifi\'e.
\end{proof}

Une cons\'equence du lemme est que chaque $\hat{f}_{j}$ est un polyn\^ome. \\

Dans le cas diagonal complexe lin\'eaire avec $\alpha \neq 0$ \`a conjugaison formelle pr\`es on suppose que $\hat{X}$ est \'egal \`a son 1-jet  et on obtient un r\'esultat
analogue au cas r\'eel.  Ainsi l'alg\`ebre $\hat{\mathcal{L}}$ est conjugu\'ee \`a une alg\`ebre $\mathcal{L}_{pol}$ de champs de vecteurs polynomiaux dans les cas diagonal r\'eel non r\'esonnant et diagonal complexe avec $\alpha \neq 0$:

$$\hat{\mathcal{L}} = \hat{\phi}_{\ast} \mathcal{L}_{pol} , \ \hat{\phi} \in \widehat{\mathrm{Diff}}(\mathbb{R}^{2}_{0}) \ {\rm et} \ \mathcal{L}_{pol} \subset \mathcal{X}_{2}. $$

Si $\phi \in \mathrm{Diff}(\mathbb{R}^{2}_{0})$ est une r\'ealisation $\mathcal{C}^{\infty}$ de $\hat{\phi}$ alors $\mathcal{L} = \phi_{\ast} \mathcal{L}_{pol}$ est une r\'ealisation de Borel de $\hat{\mathcal{L}}$ telle que $T_{\underline{0}} : \mathcal{L} \rightarrow \hat{\mathcal{L}}$ est un isomorphisme.\\

{\bf 1.2.3.2 Cas r\'esonnant}

Nous \'etudions d'abord les r\'esonnances de type Poincar\'e-Dulac, cas o\`u le 1-jet de $\hat{X}$ s'\'ecrit $X_{1} = \lambda (x_{1} \frac{\partial}{\partial x_{1}} + n x_{2} \frac{\partial}{\partial x_{2}})$, $\lambda \neq 0$ et $n \in \mathbb{N} \setminus \{0 , 1 \}$. Ici il y a une seule r\'esonnance: $n \lambda_{1} = \lambda_{2}$.
D'apr\`es le Th\'eor\`eme de Poincar\'e-Dulac, \cite{Cerveau2} le champ $\hat{X}$ est \`a conjugaison formelle pr\`es $\hat{X} =  \lambda [ x_{1} \frac{\partial}{\partial x_{1}} + ( n x_{2} + \mu x_{1}^{n}) \frac{\partial}{\partial x_{2}} ]$ avec $\mu = 0$ ou $1$. Lorsque $\mu = 0$ on est dans le cas diagonal r\'eel. Ce cas se traite comme en 1.2.3.1: $\hat{\mathcal{L}}$ est formellement conjugu\'ee \`a une alg\`ebre polyn\^omiale. Ici encore $\hat{\mathcal{L}}$ se r\'ealise de fa\c{c}on $\mathcal{C}^{\infty}$.\\

Sinon on a une ramification du cas $n = 1 , \mu= 1$ qu'on fera en fin de paragraphe.\\

Nous avons ensuite la situation des r\'esonnances pures qui se traite en deux sous cas:
\begin{itemize}
\item Cas hyperbolique o\`u le 1-jet $X_{1} = \lambda (q x_{1} \frac{\partial}{\partial x_{1}} - p  x_{2} \frac{\partial}{\partial x_{2}})$, $\lambda \neq 0$, $p$ et $q$ sont des entiers positifs et $\langle p , q \rangle = 1$ ou  $p = 0$ et $q \neq 0$ ou cas noeud-col $p = 0$ et $q = 1$.
\item Cas elliptique:  $X_{1} = \beta (- x_{2} \frac{\partial}{\partial x_{1}} +  x_{1} \frac{\partial}{\partial x_{2}})$, o\`u $\beta \neq 0$.
\end{itemize}

 Notons que le champ $\lambda (q x_{1} \frac{\partial}{\partial x_{1}} - p x_{2} \frac{\partial}{\partial x_{2}})$ a une int\'egrale premi\`ere monomiale $x_{1}^{p} x_{2}^{q}$ tandis que le champ $\beta ( - x_{2}  \frac{\partial}{\partial x_{1}} + x_{1} \frac{\partial}{\partial x_{2}} )$ a l'int\'egrale premi\`ere $x_{1}^{2} + x_{2}^{2}$. Nous allons d\'etailler le premier cas, le second pr\'esente une certaine similarit\'e. La th\'eorie des formes normales J. Martinet \cite{Martinet} (ou de la Jordanisation) permet d'\'ecrire $\hat{X} = \hat{S} + \hat{N}$ avec $[\hat{S} , \hat{N} ] = 0$ et $\hat{S}$ est formellement conjugu\'e \`a sa partie lin\'eaire, i.e. $\hat{\Phi}_{\ast} \hat{S} = \lambda (q x_{1} \frac{\partial}{\partial x_{1}} - p x_{2} \frac{\partial}{\partial x_{2}})$, et le champ $\hat{N}$ est nilpotent, ce qui signifie ici que $J^{1} \hat{N} = 0$. Remarquons que l'on peut supposer pour notre contexte que $\lambda = 1$.
Comme $E = \langle 1, \hat{f}_{1} , \ldots , \hat{f}_{p} \rangle$ est invariant sous l'action de $\hat{X}$, il l'est sous l'action de sa partie semi simple $\hat{S}$  la restriction de $\hat{S}$ \`a $E$ reste semi simple, c'est \`a dire diagonalisable: \`a l'action pr\`es de $\hat{\phi}$ que $\hat{S} = X_{1}$. D'o\`u:

\begin{equation}
(q x_{1} \frac{\partial}{\partial x_{1}} - p x_{2} \frac{\partial}{\partial x_{2}}) ( \hat{f}_{j} ) = \mu_{j} \hat{f}_{j}.
\end{equation}

Si $\mu_{j} = 0$, $\hat{f}_{j}$ est une int\'egrale premi\`ere de $\hat{S}$ et s'\'ecrit $\hat{f}_{j} = \hat{l}_{j}(x_{1}^{p} x_{2}^{q})$ avec $\hat{l}_{j} \in \hat{\mathcal{E}}_{1}$. Si $\mu_{j} \neq 0 $, alors la relation $(5)$ implique que $\mu_{j}$ est de la forme $\mu_{j} = q r_{j} - p s_{j}$. Soit $F_{j} = \{ (k , l) \in \mathbb{N}^{2} \setminus (0 , 0) / kq - p l = \mu_{j} \}$ et $(r_{j} , s_{j})$ le plus petit \'el\'ement de $F_{j}$ pour l'ordre lexicographique. On a un lemme technique \'el\'ementaire suivant qui est analogue au Lemme 10:

\begin{lemma} Si $(k , l)$ appartient \`a $F_{j}$ alors $(k , l) = (r_{j} , s_{j}) + s (p , q)$, o\`u $s \in \mathbb{N}$.
\end{lemma}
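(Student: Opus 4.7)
Mon plan consiste essentiellement \`a exploiter l'unicit\'e arithm\'etique de la d\'ecomposition de $\mu_j$ sous la forme $kq - pl$, en utilisant de fa\c{c}on cruciale la condition $\langle p, q \rangle = 1$.

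Premi\`ere \'etape. Je partirais des deux relations $kq - pl = \mu_j$ (car $(k,l) \in F_j$) et $r_j q - s_j p = \mu_j$ (par d\'efinition de $(r_j, s_j) \in F_j$). Soustraites l'une \`a l'autre, elles fournissent l'identit\'e fondamentale $(k - r_j) q = p (l - s_j)$, qui est une \'egalit\'e enti\`ere ne faisant plus intervenir $\mu_j$.

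Deuxi\`eme \'etape. Dans le cas g\'en\'erique $\langle p, q\rangle = 1$, le lemme de Gauss appliqu\'e \`a cette identit\'e donne $p \mid (k - r_j)$; on peut donc \'ecrire $k - r_j = s p$ avec $s \in \mathbb{Z}$ a priori, et en reportant dans l'\'egalit\'e pr\'ec\'edente on obtient $l - s_j = s q$. Ainsi $(k , l) = (r_j , s_j) + s (p , q)$ pour un certain entier relatif $s$. Il ne reste plus qu'\`a prouver que $s$ est positif. Par la minimalit\'e de $(r_j , s_j)$ pour l'ordre lexicographique sur $F_j$, on a $(r_j , s_j) \leq (k , l)$; comme $p \geq 1$, si l'on avait $s < 0$, alors $k = r_j + s p < r_j$, ce qui contredirait imm\'ediatement cette minimalit\'e. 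Donc $s \in \mathbb{N}$, ce qui est la conclusion voulue.

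Troisi\`eme \'etape. Il faut encore traiter les cas d\'eg\'en\'er\'es list\'es auparavant, en particulier $p = 0$, $q = 1$ (cas noeud-col) : l'\'equation $(k - r_j) q = p (l - s_j)$ devient $(k - r_j) = 0$, donc $k = r_j$, et la minimalit\'e lex impose $s_j = 0$; alors $(k , l) - (r_j , s_j) = (0 , l) = l \cdot (0 , 1) = l (p , q)$, avec $l \in \mathbb{N}$.

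L'obstacle principal n'est pas la divisibilit\'e proprement dite, qui d\'ecoule directement de $\langle p , q\rangle = 1$ par Gauss, mais la v\'erification de la positivit\'e de $s$ \`a partir de la seule minimalit\'e lexicographique -- il faut bien s\'eparer le cas $p \geq 1$ (o\`u la premi\`ere composante suffit \`a conclure) des cas d\'eg\'en\'er\'es $p = 0$ dans lesquels c'est la seconde composante qui impose la contrainte.
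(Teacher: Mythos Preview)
Your proof is correct and follows essentially the same approach as the paper: subtract the two defining relations to obtain $(k-r_j)q = (l-s_j)p$, invoke the coprimality $\langle p,q\rangle = 1$ to get the form $(k,l) = (r_j,s_j) + s(p,q)$, and use lexicographic minimality to force $s \in \mathbb{N}$. The only cosmetic difference is the order of the two steps: the paper first argues that $k-r_j>0$ and $l-s_j>0$ (in the non-degenerate case $pq\neq 0$) and then appeals to coprimality, whereas you first extract $s\in\mathbb{Z}$ via Gauss and then rule out $s<0$; both are equally valid. Your separate treatment of the degenerate case $p=0$, $q=1$ (noeud-col) is correct and matches the context; the paper's own proof writes the degenerate case as ``$(p,q)$ de type $(1,0)$'', which appears to be a typographical slip for $(0,1)$.
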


\begin{proof}
Si $(p , q)$ est de type $(1 , 0)$, alors $(r_{j}, s_{j})$ est de type $(0 , s_{j})$ et $(k , l) = (0 , s_{j}) + k (1 , 0)$. Supposons dor\'enavant  que $p q \neq 0$. Si $(k , l ) \neq (r_{j}, s_{j})$ on a $(k - r_{j}) q - (l - s_{j}) p = 0$ et donc $k - r_{j} \neq 0$. Par suite, puisque $(r_{j}, s_{j})$ est minimal pour l'ordre lexicographique et $p q \neq 0$, $k - r_{j} > 0$ et n\'ecessairement $l - s_{j} > 0$. Le fait que $\langle p , q \rangle = 1$ implique
l'existence de $s$, d'o\`u le lemme. 
\end{proof}

Il r\'esulte du lemme pr\'ec\'edent que l'on peut \'ecrire $\hat{f}_{j} = x_{1}^{r_{j}} x_{2}^{s_{j}} \hat{\varphi} (x_{1}^{p} x_{2}^{q})$ o\`u $\hat{\varphi}_{j} \in \hat{\mathcal{E}}_{1}$, $r_{j}$ et $s_{j}$ \'etant des entiers comme ci-dessus. Le champ $\hat{N}$ commutant avec $\hat{S} = \lambda (q x_{1} \frac{\partial}{\partial x_{1}} - p x_{2} \frac{\partial}{\partial x_{2}})$ a la forme suivante:

 $$\hat{N} = x_{1} \hat{\alpha} (x_{1}^{p} x_{2}^{q}) \frac{\partial}{\partial x_{1}} + x_{2} \hat{\beta} (x_{1}^{p} x_{2}^{q}) \frac{\partial}{\partial x_{2}}$$
 o\`u $\hat{\alpha}$ et $\hat{\beta}$ appartiennent \`a $\hat{\mathcal{E}}_{1}$.\\

En fait en remarquant que les transformations $(x_{1} \hat{A} (x_{1}^{p} x_{2}^{q}) , x_{2} \hat{B} (x_{1}^{p} x_{2}^{q}))$, $\hat{A} , \hat{B} \in \hat{\mathcal{E}}_{1}$, laissent invariant $\hat{S}$ on peut supposer que $\hat{N}$ est de la forme:
$$\hat{N} = \hat{a} (x_{1}^{p} x_{2}^{q}) (\lambda_{1} x_{1} \frac{\partial}{\partial x_{1}} + \lambda_{2} x_{2} \frac{\partial}{\partial x_{2}})$$

o\`u  $\lambda_{1} , \lambda_{2} \in \mathbb{R}$ et $\hat{a} \in \hat{\mathcal{E}}_{1}$. Comme le champ $\hat{X} = \hat{S} + \hat{N}$ agissant sur $E$ garde au moins un vecteur propre, disons $\hat{f}_{1}$, on a: $$\hat{X} ( x_{1}^{r_{j}} x_{2}^{s_{j}} \hat{\varphi}_{j} ( x_{1}^{p} x_{2}^{q})) = ( q r_{1} - p s_{1} ) x_{1}^{r_{1}} x_{2}^{s_{1}} \hat{\varphi}_{1} ( x_{1}^{p} x_{2}^{q}).$$

Puisque $\hat{a} \neq 0$ on obtient par un calcul \'el\'ementaire:

\begin{equation}
(r_{1}\lambda_{1} + s_{1} \lambda_{2}) \hat{\varphi}_{1} ( t ) +  (p\lambda_{1} + q \lambda_{2} ) t \hat{\varphi}_{1}^{'} ( t )  = 0.
\end{equation}
Remarquons la possibilit\'e des cas sp\'eciaux suivants:
\begin{itemize}
\item $(\lambda_{1} , \lambda_{2}) = (0 ,0)$; auquel cas $\hat{X} = \hat{S}$ et $(6)$ ne donne aucun renseignement sur $\hat{\varphi}_{1}$.
\item $(\lambda_{1} , \lambda_{2}) \neq (0 ,0)$ et $p\lambda_{1} + q \lambda_{2} = 0$; comme  $\hat{\varphi}_{1}$ est non identiquement nul, $r_{1} q - s_{1} p = 0$ et $\hat{f}_{1} = \hat{\psi}( x_{1}^{p} x_{2}^{q}))$ est une int\'egrale premi\`ere de $\hat{X}$ qui s'\'ecrit dans ce cas $\hat{X} = \hat{b} ( x_{1}^{p} x_{2}^{q})) ( q x_{1} \frac{\partial}{\partial x_{1}} -p x_{2} \frac{\partial}{\partial x_{2}})$.
\end{itemize}

Dans le cas g\'en\'erique o\`u $(p \lambda_{1} + q \lambda_{2}) \neq 0$ on constate que  $\hat{\varphi}_{1}$ est un mon\^ome: $\hat{\varphi}_{1} = \varepsilon t^{s}$  avec $s = \frac{r_{1}\lambda_{1} + s_{1} \lambda_{2}}{p\lambda_{1} + q \lambda_{2}} \in \mathbb{N}$; ce qui montre \'egalement que $\hat{f}_{1}$ est mon\^omiale.

\begin{lemma} La restriction $\hat{X}_{ \mid E} : E \rightarrow E$ de la d\'erivation $\hat{X}$ est semi simple.
\end{lemma}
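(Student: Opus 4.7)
Mon plan est de d\'eduire la semi-simplicit\'e de $\hat{X}|_E$ directement de la d\'ecomposition de Jordan de $\hat{X}$ elle-m\^eme. Les endomorphismes $\hat{S}$ et $\hat{N}$ commutent et pr\'eservent $E$ (puisque $E$ est invariant par $\hat{X} = \hat{S} + \hat{N}$, et que $\hat{S}$, en tant que partie semi-simple, pr\'eserve les sous-espaces $\hat{X}$-invariants). De plus, $\hat{N}|_E$ est nilpotent comme op\'erateur lin\'eaire: $\hat{N}$ augmente strictement l'ordre des s\'eries formelles, donc sur un sous-espace de dimension finie, $\hat{N}^k|_E = 0$ pour $k$ assez grand. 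Par cons\'equent $\hat{X}|_E = \hat{S}|_E + \hat{N}|_E$ est la d\'ecomposition de Jordan de $\hat{X}|_E$, et la semi-simplicit\'e \'equivaut \`a $\hat{N}|_E = 0$.

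Pour \'etablir $\hat{N}|_E = 0$, je d\'ecomposerais d'abord $E = \bigoplus_\mu E_\mu$ en sous-espaces propres de $\hat{S}|_E$; la commutation de $\hat{S}$ et $\hat{N}$ entra\^ine que chaque $E_\mu$ est stable par $\hat{N}|_E$, et il suffit donc de v\'erifier $\hat{N}|_{E_\mu} = 0$ pour tout $\mu$. En param\'etrant $V_\mu$ via $\hat{f} = x_1^{r_0(\mu)} x_2^{s_0(\mu)} \hat{\psi}(t)$ avec $\hat{\psi} \in \hat{\mathcal{E}}_1$, l'action de $\hat{N}$ sur $V_\mu$ devient $\hat{\psi} \mapsto D_\mu \hat{\psi} := \hat{a}(t)(c_\mu + d \cdot t \partial_t) \hat{\psi}$, o\`u $c_\mu = r_0 \lambda_1 + s_0 \lambda_2$ et $d = p \lambda_1 + q \lambda_2$. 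Le cas g\'en\'erique $d \neq 0$ \'etant celui qui subsiste, le noyau de $D_\mu$ dans $\hat{\mathcal{E}}_1$ est de dimension au plus $1$, engendr\'e par le mon\^ome $t^{n^*}$ avec $n^* = -c_\mu / d$ lorsque $n^* \in \mathbb{N}$.

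La conclusion s'obtiendrait par l'absurde: si $\hat{N}|_{E_\mu} \neq 0$, la nilpotence fournit un vecteur $\hat{g} \in E_\mu$ avec $\hat{N}\hat{g} \neq 0$ et $\hat{N}^2 \hat{g} = 0$. Alors $\hat{N}\hat{g}$ est un vecteur propre non nul de $\hat{X}$ appartenant \`a $E_\mu$, auquel s'applique l'analyse d\'ej\`a men\'ee pour $\hat{f}_1$: $\hat{N}\hat{g}$ est mon\^omial, proportionnel \`a $x_1^{r_0 + n^* p} x_2^{s_0 + n^* q}$. La relation $\hat{N}\hat{g} = \gamma\, x_1^{r_0 + n^* p} x_2^{s_0 + n^* q}$ avec $\gamma \neq 0$ se traduit par l'\'equation $\hat{a}(t)(c_\mu + d t \partial_t)\hat{\psi}_g = \gamma t^{n^*}$ dans $\hat{\mathcal{E}}_1$, et on v\'erifie par identification des coefficients, en utilisant $\hat{a}(0) = 0$ (cons\'equence de $J^1 \hat{N} = 0$) et la nullit\'e du coefficient $c_\mu + n^* d$ au degr\'e critique, que cette \'equation n'admet pas de solution non triviale.

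Le point d\'elicat sera pr\'ecis\'ement l'analyse fine de cette derni\`ere \'equation fonctionnelle: il faudra traiter simultan\'ement l'appartenance de $\hat{\psi}_g$ \`a $\hat{\mathcal{E}}_1$ (contr\^ol\'ee par le fait que $\hat{a}$ est d'ordre positif) et la contrainte alg\'ebrique pr\'ecise au degr\'e $n^*$, o\`u se produit l'obstruction. C'est ici que la sp\'ecificit\'e du cas g\'en\'erique $d \neq 0$ ainsi que la normalisation choisie pour $\hat{N}$ (via les transformations $(x_1 \hat{A}(t), x_2 \hat{B}(t))$) interviennent de mani\`ere essentielle pour exclure l'existence d'une solution avec $\gamma \neq 0$, et donc conclure $\hat{N}|_{E_\mu} = 0$, puis $\hat{N}|_E = 0$, d'o\`u la semi-simplicit\'e de $\hat{X}|_E$.
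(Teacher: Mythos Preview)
Your plan contains a genuine gap: the claim that the equation
\[
\hat{a}(t)\bigl(c_\mu + d\,t\,\partial_t\bigr)\hat{\psi}_g \;=\; \gamma\,t^{n^*}
\]
admits no solution $\hat{\psi}_g\in\hat{\mathcal{E}}_1$ with $\gamma\neq 0$ is false. Take $p=q=1$, $\lambda_1=-1$, $\lambda_2=2$ (so $d=1\neq 0$), $\hat a(t)=t$, and $\mu=2$ with $(r_0,s_0)=(2,0)$; then $c_\mu=-2$, $n^*=2$, and one checks that $\hat{\psi}_g=-\gamma t$ solves the equation. Concretely, with $\hat S=x_1\partial_{x_1}-x_2\partial_{x_2}$ and $\hat N=x_1x_2(-x_1\partial_{x_1}+2x_2\partial_{x_2})$, the element $\hat g=x_1^3x_2$ satisfies $\hat N\hat g=-x_1^4x_2^2\neq 0$ and $\hat N^2\hat g=0$; the subspace $E=\langle 1,\,x_1^3x_2,\,x_1^4x_2^2\rangle$ is $\hat X$-invariant, contains $1$, and $\hat X|_E$ is \emph{not} semi-simple. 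Of course $E\cdot\hat X$ is not a Lie subalgebra (the bracket $[x_1^3x_2\,\hat X,\,x_1^4x_2^2\,\hat X]$ produces $x_1^8x_2^4\,\hat X\notin E\cdot\hat X$), which shows that the Lie algebra hypothesis on $\hat{\mathcal L}=E\cdot\hat X$ is essential --- and your argument never invokes it.

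The paper's proof proceeds quite differently and uses the Lie structure directly. Assuming $\hat X|_E$ is not semi-simple, one finds a Jordan chain $\hat X(\hat f_1)=\mu_1\hat f_1$, $\hat X(\hat f_2)=\mu_1\hat f_2+\hat f_1$ in $E$; the bracket $[\hat f_1\hat X,\hat f_2\hat X]=\hat f_1^2\,\hat X$ then forces $\hat f_1^2\in E$, and an induction using either $[\hat f_1\hat X,\hat f_1^{\,k}\hat X]$ (when $\mu_1\neq 0$) or $[\hat f_1^{\,k}\hat X,\hat f_2\hat X]$ (when $\mu_1=0$) shows $\hat f_1^{\,k}\in E$ for all $k$. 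Since $\hat f_1(0)=0$ (evaluate $\hat X(\hat f_2)=\mu_1\hat f_2+\hat f_1$ at $0$), these powers are linearly independent, contradicting $\dim E<\infty$. If you wish to salvage your approach, you must bring the bracket condition $(\star)$ on $E$ into the analysis of $\hat N|_{E_\mu}$; the purely one-variable obstruction you describe does not exist in general.
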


\begin{proof}
Si ce n'est pas le cas, comme les valeurs propres de $\hat{X}$ sont celles de $\hat{S}$, et donc r\'eelles sous nos hypoth\`eses, la Jordanisation de $\hat{X}_{ \mid E}$ est r\'eelle et il existe, \`a re-indexation pr\`es, $\hat{f}_{1}$ et $\hat{f}_{2 }$ tels que $\hat{X} (\hat{f}_{1} ) = \mu_{1} \hat{f}_{1} $ et $\hat{X} (\hat{f}_{2} ) = \mu_{1} \hat{f}_{2}  + \hat{f}_{1}$. Il en r\'esulte que:
$$[ \hat{f}_{1} \hat{X} , \hat{f}_{2} \hat{X}] = ( \hat{f}_{1} \hat{X} (\hat{f}_{2} ) - \hat{f}_{2} \hat{X}( \hat{f}_{1} ) ) \hat{X} = \hat{f}^{2}_{1}\hat{X}.$$
On en d\'eduit que $\hat{f}_{1}^{2} \in E$; de m\^eme on a les \'equations:

\begin{equation}
[ \hat{f}_{1} \hat{X} , \hat{f}_{1}^{2} \hat{X}] = \mu_{1} \hat{f}_{1}^{3} \hat{X}
\end{equation}
\begin{equation}[ \hat{f}_{1}^{2} \hat{X} , \hat{f}_{2} \hat{X}] = (\hat{f}_{1}^{2} \hat{X}(\hat{f}_{2} ) - \hat{f}_{2} \hat{X} (\hat{f}_{1}^{2})) \hat{X} = ( \hat{f}_{1}^{3} - \mu_{1} \hat{f}_{1}^{2} \hat{f}_{2} ) \hat{X}
\end{equation}
On en d\'eduit que si $\mu_{1}$ est non nul, d'apr\`es $(7)$, $\hat{f}_{1}^{3}$ appartient \`a $E$; dans le cas contraire $\hat{f}_{1}^{3}$ appartient \'egalement \`a $E$ d'apr\`es $(8)$. Supposons par induction que $\hat{f}_{1}^{k}$ appartient \`a $E$, on a alors $[ \hat{f}_{1} \hat{X} , \hat{f}_{1}^{k} \hat{X}] = \mu_{1} (k - 1 )\hat{f}_{1}^{k + 1} \hat{X}$. Et de nouveau $\hat{f}_{1}^{k + 1}$ appartient \`a $E$ lorsque $\mu_{1}$ est non nul. Si $\mu_{1}$ est nul on a $[ \hat{f}_{1}^{k} \hat{X} , \hat{f}_{2} \hat{X}] = \hat{f}_{1}^{k + 1} \hat{X}$, et on en d\'eduit encore que $\hat{f}_{1}^{k + 1}$ appartient \`a $E$.  Comme $\hat{f}_{1} ( 0 ) = 0$, les ordres des $\hat{f}_{1}^{k }$ augmentent; ce qui implique que $E$ est dimension infinie. Ceci est en contradiction avec $\dim E < + \infty$.
\end{proof}

Il  r\'esulte du Lemme 11  que $\hat{\mathcal{L}} = \{ \hat{X} , \hat{f}_{1} \hat{X} ,  \ldots , \hat{f}_{p} \hat{X} \}$, o\`u les  $\hat{f}_{i}$ sont des vecteurs propres de $\hat{X}$: $\hat{X} ( \hat{f}_{i} ) = \mu_{i} \hat{f}_{i}$, $\mu_{i} \in \mathbb{R}$. La structure d'alg\`ebre de Lie est donn\'ee par $[  \hat{X} , \hat{f}_{i} \hat{X}] = \mu_{i} \hat{f}_{i} \hat{X}$ et $[ \hat{f}_{i} \hat{X} , \hat{f}_{j} \hat{X}] = ( \mu_{i}- \mu_{j} ) \hat{f}_{i} \hat{f}_{j} \hat{X}$.

Comme $f_{0} = 1$ appartient \`a $E$ l'un des $\mu_{i}$, disons $\mu_{0}$, est nul. Si tous les $\mu_{i}$ sont nuls, alors tous les $\hat{f}_{i}$ sont des int\'egrales premi\`eres du champ $\hat{X}$; comme $E$ est suppos\'e de dimension superieur ou \'egal \`a deux l'une au moins de ces int\'egrales premi\`eres est non constante. Le champ $\hat{X}$ s'\'ecrit $\hat{X} = \hat{b} ( x_{1}^{p} x_{2}^{q}) (q x_{1} \frac{\partial}{\partial x_{1}} - p x_{2} \frac{\partial}{\partial x_{2}})$ et  $\hat{f}_{i}$ est \'egale \`a $\hat{l}_{i} ( x_{1}^{p} x_{2}^{q})$, o\`u $\hat{b}$ et les $\hat{l}_{i}$ appartiennent \`a $\hat{\mathcal{E}}_{1}$. Soit $\hat{\Phi}$ le diff\'eomorphisme de mise sous forme normale qui lin\'earise $\hat{S}$. En choisissant des r\'ealisations $\Phi$, $b$ et $f_{i}$ de $\hat{\Phi}$, $\hat{b}$ et $\hat{f}_{i}$ respectivement on obtient une r\'ealisation $\mathcal{L}$ de $\hat{\mathcal{L}}$ telle que $T_{\underline{0}} : \mathcal{L} \rightarrow \hat{\mathcal{L}}$ est un isomorphisme.

Supposons  les $\mu_{i}$ sont non tous nuls. Nous devons envisager deux cas: celui o\`u tous les $\mu_{i}$ sont non nuls pour $i \neq 0$, et celui o\`u l'un des $\mu_{i}$ est nul pour $i \neq 0$. Pla\c{c}ons nous dans ce dernier cas. Disons $\hat{X}\hat{f}_{1} = 0$, $\hat{f}_{1}$ est non constant et $\hat{X}\hat{f}_{2} = \mu_{2} \hat{f}_{2}$ avec $\mu_{2} \neq 0$. On peut supposer que $\hat{f}_{1}(0) = 0$; ce que l'on fera. On a
$[ \hat{f}_{1} \hat{X} , \hat{f}_{2} \hat{X}] = \mu_{2} \hat{f}_{1} \hat{f}_{2} \hat{X}$. Et donc  $\hat{f}_{1} \hat{f}_{2}^{2} \hat{X}$ appartient \`a $\hat{\mathcal{L}}$. De m\^eme $[ \hat{f}_{1} \hat{X} , \hat{f}_{1} \hat{f}_{2} \hat{X}] = \mu_{2} \hat{f}_{1}^{2} \hat{f}_{2} \hat{X}$ et donc $\hat{f}_{1}^{2} \hat{f}_{2} \hat{X}$ appartient \`a $\hat{\mathcal{L}}$. Par induction on montre que tous les $\hat{f}_{1}^{k} \hat{f}_{2} \hat{X}$, $k \in \mathbb{N}$, sont dans $\hat{\mathcal{L}}$. Comme $\hat{f}_{1} (0) = 0$ ils forment une famille libre en contradiction avec la finitude de la dimension de $\hat{\mathcal{L}}$. Ainsi ce cas ne se pr\'esente pas.\\

 Consid\'erons  la situation o\`u tous les $\mu_{i}$, \`a l'exception de $\mu_{0}$, sont non nuls, i.e. la dimension du sous espace propre $V(0)$ associ\'e \`a $\mu_{0}$ est $1$. Les $\hat{f}_{i}$ \'etant propres pour $\hat{X}$, ils le sont pour sa partie semi-simple $\hat{S}$ et sont annul\'es par la partie nilpotente: $\hat{S} \hat{f}_{i} = \mu_{i} \hat{f}_{i}$ et $\hat{N} \hat{f}_{i} = 0$.
Supposons l'existence de $\mu_{i} \neq \mu_{j}$ pour deux indices distincts $i$ et $j$ qu'on suppose \^etre $1$ et $2$. On a alors
$[ \hat{f}_{1} \hat{X} , \hat{f}_{2} \hat{X}] = ( \mu_{2} - \mu_{1}) \hat{f}_{1} \hat{f}_{2} \hat{X}$ et donc $\hat{f}_{1} \hat{f}_{2}. \hat{X}$ est dans $\hat{\mathcal{L}}$. On v\'erifie que $\hat{X} (\hat{f}_{1} \hat{f}_{2}) = ( \mu_{2} + \mu_{1}) \hat{f}_{1} \hat{f}_{2}$. Notons que $\mu_{2} + \mu_{1}$ est non nul puisque $V(0)$ est de dimension r\'eelle $1$. On a aussi $[ \hat{f}_{1} \hat{X} , \hat{f}_{1} \hat{f}_{2} \hat{X}] =  \mu_{2}  \hat{f}_{1}^{2} \hat{f}_{2} \hat{X}$. Comme $\mu_{2}$ est non nul, $\hat{f}_{1}^{2}\hat{f}_{2} \hat{X}$ appartient \`a $\hat{\mathcal{L}}$. Supposons que pour $n \leq k$, $\hat{f}^{k}_{1} \hat{f}_{2} \hat{X}$ soit dans $\hat{\mathcal{L}}$. Des relations  $\hat{X} (\hat{f}^{n}_{1} \hat{f}_{2}) = ( n \mu_{1} + \mu_{2}) \hat{f}^{n}_{1} \hat{f}_{2}$ et  $\dim V(0) = 1$ on voit que n\'ecessairement $n \mu_{1} + \mu_{2} \neq 0$. On d\'eduit alors de $[ \hat{f}_{1} \hat{X} , \hat{f}_{1}^{k} \hat{f}_{2} \hat{X}] = ( (k - 1)\mu_{1} + \mu_{2} ) \hat{f}_{1}^{k + 1} \hat{f}_{2} \hat{X}$ que $\hat{f}^{k + 1}_{1} \hat{f}_{2} \hat{X}$ est dans $\hat{\mathcal{L}}$. Ainsi $\hat{f}^{k}_{1} \hat{f}_{2} \hat{X}$ appartient \`a $\hat{\mathcal{L}}$ et ceci pour tout $k$. Ce qui contredit la finitude de la dimension de $E$.

Donc tous les r\'eels $\mu_{i}$, hormis $\mu_{0}$, sont \'egaux \`a une constante $\mu$ non nulle. On en d\'eduit que les $\hat{f}_{i}$ sont du type $\hat{f}_{i} = x_{1}^{r} x_{2}^{s}\hat{\varphi}_{i} (x_{1}^{p} x_{2}^{q})$. L'alg\`ebre $\hat{\mathcal{L}}$ a donc la pr\'esentation suivante $\hat{\mathcal{L}} = \{ \hat{X} , \hat{f}_{1} \hat{X} , \ldots , \hat{f}_{p} \hat{X} \} $ avec $[  \hat{X} , \hat{f}_{i}\hat{X}] = \mu \hat{f}_{i} \hat{X}$, $\mu = q r - p s$ et $[  \hat{f_{i}}\hat{X} , \hat{f}_{j}\hat{X}] = 0$. Rappelons que $\hat{N}$ est du type $\hat{a}(x_{1}^{p} x_{2}^{q}) (\lambda_{1} x_{1} \frac{\partial}{\partial x_{1}} + \lambda_{2} x_{2} \frac{\partial}{\partial x_{2}})$. Comme on l'a vu en $(6)$:
$$(r \lambda_{1} + s \lambda_{2}) \hat{\varphi}_{i} + (p \lambda_{1} + q \lambda_{2}) t \hat{\varphi}^{'} \equiv 0.$$

Si $p \lambda_{1} + q \lambda_{2} \neq 0$ alors chaque $\hat{\varphi}_{i}$ est un mon\^ome, et donc $\hat{f}_{i}$ aussi: $\hat{f}_{i} = f_{i} = x_{1}^{r} x_{2}^{s} (x_{1}^{p} x_{2}^{q})^{k_{i}}$. Soient $a$ une r\'ealisation de $\hat{a}$ et $X = (q x_{1} \frac{\partial}{\partial x_{1}} - p x_{2} \frac{\partial}{\partial x_{2}}) + a (x_{1}^{p} x_{2}^{q}) (\lambda_{1} x_{1} \frac{\partial}{\partial x_{1}} + \lambda_{2} x_{2} \frac{\partial}{\partial x_{2}})$. L'alg\`ebre
 $\mathcal{L} = \langle X , f_{1} X , \ldots , f_{p} X  \rangle$ est alors, \`a conjugaison $\mathcal{C}^{\infty}$ pr\`es, une r\'ealisation $\mathcal{C}^{\infty}$ de $\hat{\mathcal{L}}$ telle que $T_{\underline{0}} : \mathcal{L} \rightarrow \hat{\mathcal{L}}$ est un isomorphisme. \\

 Si $p \lambda_{1} + q \lambda_{2} = 0$, alors $\hat{X}$ est de la forme $\hat{X} = \hat{b}(x_{1}^{p} x_{2}^{q})(q x_{1} \frac{\partial}{\partial x_{1}} - p x_{2} \frac{\partial}{\partial x_{2}})$; et puisque $\mu = q r - p s \neq 0$ on v\'erifie qu'en fait $\hat{b}$ est constant.
 Ainsi $\hat{\mathcal{L}} = \langle X = q x_{1} \frac{\partial}{\partial x_{1}} - p x_{2} \frac{\partial}{\partial x_{2}} , \hat{\varphi}_{1}(x_{1}^{p} x_{2}^{q}) X , \ldots , \hat{\varphi}_{p}(x_{1}^{p} x_{2}^{q}) X \rangle$, o\`u $\hat{\varphi}_{i}$ appartient \`a $\hat{\mathcal{E}}_{1}$. Ici aussi, en r\'ealisant les $\hat{\varphi}_{i}$, on obtient une r\'ealisation $\mathcal{C}^{\infty}$ de $\mathcal{L}$ telle que $T_{\underline{0}} : \mathcal{L} \rightarrow \hat{\mathcal{L}}$ soit un isomorphisme. \\
 Lorsque $\hat{X}$ est elliptique on consid\`ere le complexifi\'e $\hat{\mathcal{L}}^{\mathbb{C}}$ de l'alg\`ebre de Lie $\hat{\mathcal{L}}$. Le diff\'eomorphisme $(x_{1} , x_{2}) \mapsto \Phi (x_{1} , x_{2}) = ( x_{1} + i x_{2} , i x_{1} + x_{2})$ conjugue $\hat{X}$ \`a $\hat{Y} = X_{1} + \hat{N}$ o\`u $X_{1} = i \beta (x_{1}\frac{\partial}{\partial x_{1}} - x_{2}\frac{\partial}{\partial x_{2}})$ et $\hat{N}$ est nilpotent ($J^{1}\hat{N}$ = 0). En suivant la preuve de ce qui pr\'ec\`ede on voit que:
 \begin{itemize}
 \item Soit $\hat{X} = \hat{b} (x_{1}^{2} + x_{2}^{2}) X$, o\`u $X = \beta ( - x_{2}\frac{\partial}{\partial x_{1}} + x_{1}\frac{\partial}{\partial x_{2}})$ et $\hat{\mathcal{L}} = \langle \hat{X} , \hat{\varphi}_{1}(x_{1}^{2} + x_{2}^{2}) , \ldots , \hat{\varphi}_{p}(x_{1}^{2} + x_{2}^{2}) \rangle$, o\`u $\hat{b}$ et les $\hat{\varphi}_{i}$ appartiennent \`a $\hat{\mathcal{E}}_{1}$.\\
 \item Soit les vecteurs propres de $\hat{\Phi}_{\ast}\hat{X}^{\mathbb{C}}$ sont des polyn\^omes, $\hat{X}^{\mathbb{C}}$ \'etant le complexifi\'e de $\hat{X}$. Ce qui permet d'en  d\'eduire, \`a conjugaison formelle pr\`es, que $\hat{\mathcal{L}} = \langle \hat{X} , f_{1} \hat{X} , \ldots , f_{p}\hat{X}_{p} \rangle$, o\`u les $f_{i}$ sont des polyn\^omes.
 \end{itemize}
 Selon le cas, en r\'ealisant $\hat{b}$ et les $\hat{\varphi}_{i}$ ou $\hat{X}$, on obtient encore une r\'ealisation $\mathcal{L}$ de $\hat{\mathcal{L}}$ telle que $T_{\underline{0}} : \mathcal{L} \rightarrow \hat{\mathcal{L}}$ est un isomorphisme.\\

{\bf 1.2.3.3 Les cas non semi-simples}

Lorsque $\hat{X}$ a son 1-jet nilpotent, disons $x_{1} \frac{\partial}{\partial x_{2}}$, $\hat{X}$ est lui m\^eme nilpotent. L'exemple typique d'alg\`ebre saturable pr\'esentant cette configuration est $\hat{\mathcal{L}} = \langle \hat{X} = x_{1} \frac{\partial}{\partial x_{2}} , \hat{f}_{1}(x_{1}) \hat{X}, \ldots , \hat{f}_{p}(x_{1}) \hat{X} \rangle$, o\`u les $\hat{f}_{i} \in \hat{\mathcal{E}}_{1}$.\\

Soit $\hat{\mathcal{L}} = \langle \hat{X}  , \hat{f}_{1} \hat{X}, \ldots , \hat{f}_{p} \hat{X} \rangle$ une sous alg\`ebre de Lie saturable de $\hat{\mathcal{X}}_{2}$ avec $J^{1} \hat{X} =  x_{1} \frac{\partial}{\partial x_{2}}$. Comme $\hat{X}$ est nilpotent en suivant la preuve du Lemme 12 on voit que les $\hat{X} \hat{f}_{i}$ sont nuls et que $\hat{\mathcal{L}}$ est ab\'elienne. En particulier si $\dim \hat{\mathcal{L}} \geq 2$, les $\hat{f}_{i}$ sont non constants; et $\hat{X}$ a ainsi une int\'egrale premi\`ere non constante. La Proposition 9 traite ce cas. \\

Dans le cas contraire, \`a diff\'eomorphisme formel  pr\`es,  $\hat{X} = (\lambda x_{1} + x_{2})\frac{\partial}{\partial x_{1}} + \lambda x_{2} \frac{\partial}{\partial x_{2}} $, $\lambda \neq 0$.  Notons qu'on peut supposer que $\lambda = 1$, ce que l'on fera. La Jordanisation du champ $\hat{X}$ est r\'eelle; on en d\'eduit que celle de $\hat{X}_{|E}$ est \'egalement r\'eelle. Consid\'erons une suite $\hat{f}_{1} , \ldots , \hat{f}_{m}$ d'\'el\'ements de $E$ telle que $\hat{X}(\hat{f}_{1}) = \mu \hat{f}_{1}$ et $\hat{X}(\hat{f}_{i}) = \mu \hat{f}_{i} + \hat{f}_{i - 1}$ pour $i = 2 , \ldots , m$. Posons $\hat{f}_{i} = \sum_{j \geq k_{i}} A_{j}^{i}$, o\`u $A_{j}^{i}$ est homog\`ene de degr\'e $j$ et $A_{k_{i}}^{i}$ est non nul. La condition $\hat{X}(\hat{f}_{1}) = \mu \hat{f}_{1}$ implique que:

\begin{equation} (\mu - j) A^{1}_{j} = x_{2} \frac{\partial A^{1}_{j}}{\partial x_{1}} \ \forall j \in \mathbb{N}.
\end{equation}
Si $\mu - j$ est non nul alors $A^{1}_{j} \equiv 0$. En effet si on \'ecrit $A^{1}_{j} = \sum_{k = l}^{j} \alpha_{k}^{j} x_{1}^{k} x_{2}^{j - k}$ en reportant le dans l'\'equation pr\'ec\'edente on voit que n\'ecessairement $\alpha_{l}^{j} = 0$. On en d\'eduit que $\mu = k_{1}$ et $\hat{f}_{1} = \alpha x_{2}^{k_{1}}$. Supposons qu'on ait montr\'e que $\hat{f}_{i - 1}$ est un polyn\^ome. L'\'equation $\hat{X}(\hat{f}_{i}) = \mu \hat{f}_{i} + \hat{f}_{i - 1}$ implique que:

$$ (\mu - j) A^{i}_{j} = x_{2} \frac{\partial A^{i}_{j}}{\partial x_{1}} \ \forall j  > d^{\circ} \hat{f}_{i - 1}. $$

Cette \'equation est du m\^eme type que $(9)$; on en d\'eduit que tous les $A^{i}_{j}$ sont nuls lorsque $j  > d^{\circ} \hat{f}_{i - 1}$ \`a l'exception  peut \^etre d'un seul. Ce qui implique que $\hat{f}_{i}$ est un polyn\^ome. \\

En consid\'erant une base dans laquelle la matrice de $\hat{X}_{|E}$ est sous forme de Jordan on d\'eduit que $\hat{\mathcal{L}}$ est conjugu\'ee \`a $\langle X = ( x_{1} + x_{2})\frac{\partial}{\partial x_{1}} +  x_{2} \frac{\partial}{\partial x_{2}} , f_{1} X , \ldots , f_{p} X \rangle$, o\`u les $f_{i}$ sont des polyn\^omes. En r\'esum\'e nous obtenons le:

\begin{theorem} Soit $\hat{\mathcal{L}} \subset \hat{\mathcal{X}}_{2}$ une sous alg\`ebre de Lie de dimension finie, saturable et de rang ponctuel 1. Il existe une r\'ealisation $\mathcal{C}^{\infty}$ not\'ee $\mathcal{L}$ de $\hat{\mathcal{L}}$ telle que $T_{\underline{0}} : \mathcal{L} \rightarrow \hat{\mathcal{L}}$ soit un isomorphisme d'alg\`ebre de Lie.
\end{theorem}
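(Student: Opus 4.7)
The plan is to carry out a case analysis on the 1-jet of the distinguished generator $\hat{X}$ of the saturable rank-1 algebra $\hat{\mathcal{L}} = E\cdot\hat{X}$ and to invoke, in each case, the partial realization results already assembled in this section.

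First, I would organize the trichotomy. If $\hat{X}$ is non-singular ($\hat{X}(\underline{0})\neq 0$), Proposition~6 already produces the desired realization by flowing to the model $\partial/\partial x_1$ and solving the transported linear ODE system $\mathcal{D}(\partial/\partial x_1)$ via a fundamental system of analytic solutions $s_j(x_1)$, then applying the ordinary Borel theorem to the transverse coefficients $\hat{h}^j_k(x_2)\in\hat{\mathcal{E}}_1$. If $J^1\hat{X}=0$, Proposition~7 shows that $\hat{\mathcal{L}}$ is abelian and $\hat{X}$ admits a non-constant first integral; Proposition~9 then provides the realization, using the Mattei--Moussu structure of $\hat{\mathcal{A}}(\hat{X})$ and an isolated-singularity dual 1-form $\hat{\omega}_0$ whose real/complex irreducible factors can be lifted factor-by-factor by Borel.

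It remains to treat $J^1\hat{X}\neq 0$ with $\hat{X}(\underline{0})=0$, which subdivides according to the Jordan type of $X_1=J^1\hat{X}$. In the non-resonant real or complex diagonal cases (1.2.3.1), Lemma~10 forces the eigenvectors $\hat{f}_j$ of the derivation $\hat{X}$ to be polynomials, so formal conjugation reduces $\hat{\mathcal{L}}$ to a polynomial algebra $\mathcal{L}_{pol}$ and a Borel realization of the conjugating $\hat{\phi}$ finishes the job. In the Poincaré--Dulac resonant case with $\mu=0$ one is back in the diagonal setting; the residual $n=1,\mu=1$ configuration reduces to a non semi-simple Jordan block. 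In the pure resonant hyperbolic case one uses Martinet's normalization $\hat{X}=\hat{S}+\hat{N}$ with $\hat{S}$ linear after conjugation, Lemma~11 to show that the eigenvectors have the form $x_1^{r_j}x_2^{s_j}\hat\varphi_j(x_1^p x_2^q)$, Lemma~12 to ensure $\hat{X}_{|E}$ is semi-simple, and then equation (6) to force each $\hat\varphi_j$ to be monomial in the generic case $p\lambda_1+q\lambda_2\neq 0$, or a free parameter in $\hat{\mathcal{E}}_1$ otherwise; in the generic case $\hat{\mathcal{L}}$ is already polynomial after conjugation, and in the degenerate case it is of the form $\langle X,\hat\varphi_1(x_1^px_2^q)X,\ldots,\hat\varphi_p(x_1^px_2^q)X\rangle$ to which Borel applies directly to the $\hat\varphi_i\in\hat{\mathcal{E}}_1$. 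The elliptic case is handled by passing to the complexification and applying the hyperbolic analysis to $\hat{Y}=\Phi_*\hat{X}$. Finally, in the nilpotent 1-jet case (1.2.3.3), either $\dim\hat{\mathcal{L}}\geq 2$ forces existence of a non-constant first integral of $\hat{X}$ (reducing to Proposition~9), or the Jordan normal form argument on $\hat{X}_{|E}$ via equation (9) shows inductively that every $\hat{f}_i$ is a polynomial, again yielding a polynomial model.

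In every case, once $\hat{\mathcal{L}}$ has been formally conjugated to a model algebra whose coefficients are either polynomials, functions of a single variable, or solutions of a linear ODE in one variable, I realize those coefficients by the ordinary Borel theorem, check that the algebraic relations (the Lie brackets $[\hat{X},\hat{f}_i\hat{X}]=\hat{X}(\hat{f}_i)\hat{X}$) are preserved at the $\mathcal{C}^\infty$ level, and transport back by a $\mathcal{C}^\infty$ realization of the conjugating formal diffeomorphism; the resulting $\mathcal{L}\subset\mathcal{X}_2$ satisfies $T_{\underline{0}}:\mathcal{L}\to\hat{\mathcal{L}}$ bijective. The main obstacle throughout is not Borel realization itself but the formal normalization: showing that in each resonant subcase the coefficients $\hat{f}_i$ are either polynomial or depend on a single transverse variable, so that lifting them commutes with the Lie bracket. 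This is precisely what Lemmas 10, 11, 12 and equation (9) achieve, and collecting their outputs across the Jordan stratification yields the theorem.
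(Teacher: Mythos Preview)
Your proposal is correct and follows essentially the same route as the paper: the theorem is stated there as a summary (``En r\'esum\'e'') of the case analysis carried out in \S1.2.1--1.2.3, and you reproduce precisely that stratification by the Jordan type of $J^1\hat{X}$, invoking Propositions~6, 7, 9 and Lemmas~10--12 at the same places. One minor terminological slip: what you call ``the nilpotent 1-jet case (1.2.3.3)'' actually covers the full non-semi-simple stratum, including the $\lambda\neq 0$ Jordan-block case (which is where equation~(9) lives), but your ``either\dots or'' correctly separates the two subcases and the content is right.
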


\subsection{Dimension deux: alg\`ebres ab\'eliennes de rang deux}

La Proposition 7 montre que les alg\`ebres ab\'eliennes jouent un r\^ole sp\'ecial dans notre contexte. Toutefois cette proposition ne poss\`ede pas de g\'en\'eralisa-tion lorsque le rang ponctuel g\'en\'erique est plus grand que 1. Par exemple l'alg\`ebre engendr\'ee par les trois champs $X_{0} = x_{1} x_{2} \frac{\partial}{\partial x_{3}}$, $X_{1} = x_{2} x_{3} \frac{\partial}{\partial x_{4}}$ et $X_{2} = x_{1} x_{2}^{2} \frac{\partial}{\partial x_{4}}$, $[X_{0} , X_{1}] = X_{2}$, est une sous alg\`ebre  de rang 2 de $\hat{\mathcal{X}}_{4}$; elle est toutefois nilpotente. En fait on a la:

\begin{proposition} Soit $\hat{\mathcal{L}} \subset \hat{\mathcal{X}}_{n}$ une sous alg\`ebre de Lie de dimension finie. On suppose que $J^{1} \hat{\mathcal{L}} = \{ 0 \}$, i.e. $\hat{\mathcal{L}} \subset \mathcal{M}^{2} \hat{\mathcal{X}}_{n}$. Alors $\hat{\mathcal{L}}$ est nilpotente.
\end{proposition}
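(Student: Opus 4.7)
The plan is to apply Engel's theorem: since $\hat{\mathcal{L}}$ is finite-dimensional, it suffices to show that the adjoint operator $\mathrm{ad}_{\hat{Z}} : \hat{\mathcal{L}} \to \hat{\mathcal{L}}$ is nilpotent for every $\hat{Z} \in \hat{\mathcal{L}}$. This was already alluded to in the proof of Proposition~8 (the 1-jet-nul sub-case), where the same argument was used; the task here is to make it quantitative enough to handle all of $\mathcal{M}^2 \hat{\mathcal{X}}_n$.

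The first step is the order estimate on brackets. If $\hat{Z} = \sum \hat{z}_i \frac{\partial}{\partial x_i}$ with $\hat{z}_i \in \mathcal{M}^p$ and $\hat{Y} = \sum \hat{y}_j \frac{\partial}{\partial x_j}$ with $\hat{y}_j \in \mathcal{M}^q$, then the coefficients of $[\hat{Z},\hat{Y}]$ are sums of terms $\hat{z}_i \partial_i \hat{y}_j - \hat{y}_i \partial_i \hat{z}_j$, which lie in $\mathcal{M}^{p+q-1}$. Therefore
$$[\mathcal{M}^p\hat{\mathcal{X}}_n,\,\mathcal{M}^q\hat{\mathcal{X}}_n] \subset \mathcal{M}^{p+q-1}\hat{\mathcal{X}}_n.$$
In particular, since $\hat{\mathcal{L}} \subset \mathcal{M}^2\hat{\mathcal{X}}_n$, for any $\hat{Z} \in \hat{\mathcal{L}}$ the operator $\mathrm{ad}_{\hat{Z}}$ sends $\mathcal{M}^q\hat{\mathcal{X}}_n$ into $\mathcal{M}^{q+1}\hat{\mathcal{X}}_n$, and by induction $\mathrm{ad}_{\hat{Z}}^k(\hat{Y}) \in \mathcal{M}^{q+k}\hat{\mathcal{X}}_n$ whenever $\hat{Y} \in \mathcal{M}^q\hat{\mathcal{X}}_n$.

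The second step is to deduce that every eigenvalue of $\mathrm{ad}_{\hat{Z}}$ (viewed on the complexification $\hat{\mathcal{L}}^{\mathbb{C}}$) is zero. Suppose for contradiction that $\lambda \neq 0$ with eigenvector $\hat{Y} \neq 0$, and let $q \geq 2$ be the order of $\hat{Y}$, so $\hat{Y} \in \mathcal{M}^q \hat{\mathcal{X}}_n^{\mathbb{C}}$. Then for each $k$,
$$\hat{Y} = \lambda^{-k}\,\mathrm{ad}_{\hat{Z}}^{k}(\hat{Y}) \in \mathcal{M}^{q+k}\hat{\mathcal{X}}_n^{\mathbb{C}},$$
so $\hat{Y}$ lies in $\bigcap_{k\geq 0}\mathcal{M}^{k}\hat{\mathcal{X}}_n^{\mathbb{C}} = \{0\}$, a contradiction. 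Hence the only eigenvalue of $\mathrm{ad}_{\hat{Z}}$ is $0$, which means $\mathrm{ad}_{\hat{Z}}$ is nilpotent as an endomorphism of the finite-dimensional space $\hat{\mathcal{L}}$.

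The final step is Engel's theorem: a finite-dimensional Lie algebra all of whose adjoint operators are nilpotent is itself nilpotent. There is no serious obstacle: the heart of the proof is the elementary order estimate on brackets, and everything else is classical Lie algebra theory. The only point requiring a little care is that eigenvalues should be considered over $\mathbb{C}$, which is why we pass to the complexification; this is harmless because the order filtration $\mathcal{M}^k \hat{\mathcal{X}}_n^{\mathbb{C}}$ behaves identically.
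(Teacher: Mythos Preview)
Your proof is correct and follows essentially the same approach as the paper: both reduce to Engel's theorem by showing that every $\mathrm{ad}_{\hat{Z}}$ is nilpotent, pass to the complexification, and use the fact that bracketing with an element of $\mathcal{M}^2\hat{\mathcal{X}}_n$ strictly raises the order to rule out nonzero eigenvalues. Your version spells out the order estimate $[\mathcal{M}^p\hat{\mathcal{X}}_n,\mathcal{M}^q\hat{\mathcal{X}}_n]\subset\mathcal{M}^{p+q-1}\hat{\mathcal{X}}_n$ and iterates it, whereas the paper observes directly that $[\hat{X},\hat{Y}]=\mu\hat{Y}$ with $\mathrm{ord}\,[\hat{X},\hat{Y}]>\mathrm{ord}\,\hat{Y}$ forces $\mu=0$; the underlying idea is identical.
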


\begin{proof}
Il suffit d'\'etablir que les applications $ad_{\hat{X}} : \hat{\mathcal{L}} \rightarrow \hat{\mathcal{L}}$ d\'efinies par $ad_{\hat{X}}(\hat{Y}) = [\hat{X} , \hat{Y}]$ et $\hat{X} \in \hat{\mathcal{L}}$, sont toutes nilpotentes d'apr\`es \cite{Bourbaki}. On peut, quitte \`a complexifier, supposer que $\hat{\mathcal{L}}$ est d\'efinie sur $\mathbb{C}$, i.e. $\hat{\mathcal{L}} \subset \hat{\mathcal{X}}_{n}(\mathbb{C}^{n}_{,0})$. Soient $\hat{X}$ et $\hat{Y}$ deux \'el\'ements de $\hat{\mathcal{L}}$ tels que $\hat{Y}$ est propre pour $ad_{\hat{X}}$: $ad_{\hat{X}}(\hat{Y}) = \mu \hat{Y}$. Comme $J^{1} \hat{X} = 0$ l'ordre du premier jet non nul de $[\hat{X} , \hat{Y}]$ est strictement sup\'erieur \`a celui de $\hat{Y}$; on en d\'eduit que $\mu$ est nulle et par suite $ad_{\hat{X}}$ est nilpotente. 
\end{proof}

On appelle quotient formel tout \'el\'ement du corps des fractions $\hat{\mathcal{M}}_{n}$ de l'anneau des s\'eries formelles de $\hat{\mathcal{E}}_{n}$. Un \'el\'ement de $\hat{\mathcal{M}}_{n}$ s'\'ecrit $\frac{\hat{f}}{\hat{g}}$, o\`u $\hat{f}$ et $\hat{g}$ sont des \'el\'ements de $\hat{\mathcal{E}}_{n}$ sans facteur commun. Pour les alg\`ebres commutatives de champs formels, on obtient en dimension deux d'espace:

\begin{lemma} Soit $\hat{\mathcal{L}} \subset \hat{\mathcal{X}}_{2}$ une alg\`ebre de Lie ab\'elienne de champs formels de rang $2$. Alors $\dim \hat{\mathcal{L}} = 2$.
\end{lemma}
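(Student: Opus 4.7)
Mon plan consiste à exploiter la commutativité pour montrer que tout élément de $\hat{\mathcal{L}}$ est en fait combinaison $\mathbb{R}$-linéaire de deux champs $\hat{\mathcal{E}}_2$-indépendants fixés. Puisque $\mathcal{r}(\hat{\mathcal{L}}) = 2$, je commencerais par choisir $\hat{X}_1, \hat{X}_2 \in \hat{\mathcal{L}}$ qui soient $\hat{\mathcal{E}}_2$-indépendants; la matrice $A = (\hat{a}_{ji})$ de leurs coefficients dans la base $(\partial/\partial x_j)_{j=1,2}$ a un déterminant $\hat{\Delta} \in \hat{\mathcal{E}}_2$ non nul. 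Le $\hat{\mathcal{M}}_2$-espace vectoriel $\hat{\mathcal{X}}_2 \otimes_{\hat{\mathcal{E}}_2} \hat{\mathcal{M}}_2$ étant alors de dimension $2$ et engendré par $\hat{X}_1, \hat{X}_2$, tout $\hat{Z} \in \hat{\mathcal{L}}$ se décompose de façon unique sous la forme $\hat{Z} = \hat{a}\hat{X}_1 + \hat{b}\hat{X}_2$ avec $\hat{a}, \hat{b} \in \hat{\mathcal{M}}_2$.

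L'étape centrale consisterait à utiliser la commutativité en calculant $[\hat{X}_i, \hat{Z}] = 0$ pour $i = 1, 2$; comme $[\hat{X}_1, \hat{X}_2] = 0$, ce crochet se réduit à
$$\hat{X}_i(\hat{a})\hat{X}_1 + \hat{X}_i(\hat{b})\hat{X}_2 = 0.$$
L'indépendance sur $\hat{\mathcal{M}}_2$ des $\hat{X}_i$ donnerait $\hat{X}_i(\hat{a}) = \hat{X}_i(\hat{b}) = 0$ pour $i = 1, 2$. Ce système linéaire, dont la matrice $A^{\top}$ est inversible sur $\hat{\mathcal{M}}_2$, impose alors $\partial \hat{a}/\partial x_j = \partial \hat{b}/\partial x_j = 0$ pour $j = 1, 2$.

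Le point un peu délicat, et probablement le principal obstacle, consiste à vérifier que le corps des constantes différentielles du corps $\hat{\mathcal{M}}_2$ se réduit à $\mathbb{R}$. En écrivant $\hat{a} = \hat{f}/\hat{g}$ sous forme réduite dans l'anneau factoriel $\hat{\mathcal{E}}_2$, la relation $\partial_j \hat{a} = 0$ se traduit par $\hat{g} \mid \partial_j \hat{g}$ pour tout $j$. Une comparaison des valuations appliquée à la partie homogène de plus bas degré de $\hat{g}$, combinée avec la formule d'Euler, exclut que $\hat{g}$ soit non inversible; donc $\hat{g}$ est une unité et $\hat{a} \in \hat{\mathcal{E}}_2$ devient une série à dérivées partielles toutes nulles, c'est-à-dire un réel. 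Le même argument s'applique à $\hat{b}$, d'où $\hat{Z} \in \mathbb{R}\hat{X}_1 + \mathbb{R}\hat{X}_2$ et finalement $\dim \hat{\mathcal{L}} = 2$.
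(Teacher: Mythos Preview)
Your argument is correct and follows exactly the same route as the paper: write an arbitrary $\hat{Z}$ as a $\hat{\mathcal{M}}_2$-combination of two $\hat{\mathcal{E}}_2$-independent elements, use the vanishing brackets to see that the coefficients are annihilated by both $\hat{X}_i$, invert the coefficient matrix to get vanishing partial derivatives, and conclude that the coefficients are real constants. You actually supply more detail than the paper does on the final step (showing that the differential constants of $\hat{\mathcal{M}}_2$ reduce to $\mathbb{R}$), which the paper simply asserts.
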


{\bf Preuve} Soient $\hat{X}$ et $\hat{Y}$ deux \'el\'ements de $\hat{\mathcal{L}}$ tels que $det ( \hat{X} , \hat{Y} ) \neq 0$. Si $\hat{Z}$ appartient \`a $\hat{\mathcal{L}}$ il existe des quotients  formels $\hat{A}$ et $\hat{B}$ tels que $\hat{Z} = \hat{A} \hat{X} + \hat{B} \hat{Y}$ (alg\`ebre lin\'eaire sur le corps des s\'eries formelles). Comme $[\hat{X} , \hat{Y}] = [\hat{X} , \hat{Z}] = [\hat{Y} , \hat{Z}] = 0$  on a $\hat{X} ( \hat{A} ) = \hat{X} ( \hat{B} ) = \hat{Y} ( \hat{A} ) = \hat{Y} ( \hat{B} ) = 0$. Par suite on en d\'eduit que $\frac{\partial \hat{A}}{\partial x_{i}} = \frac{\partial \hat{B}}{\partial x_{i}} = 0$, pour $i = 1 , 2$. Ce qui implique que $\hat{A}$ et $\hat{B}$ sont constantes.

\begin{examples} Dans chacun des cas suivants l'alg\`ebre $\hat{\mathcal{L}}$ est ab\'elienne de rang $2$.
\begin{enumerate}
\item Variables s\'epar\'ees: $\hat{\mathcal{L}} = \langle \hat{f}_{1} ( x_{1} ) \frac{\partial}{\partial x_{1}} , \hat{f}_{1} ( x_{2} ) \frac{\partial}{\partial x_{2}} \rangle$, o\`u $\hat{f}_{i}$ appartient \`a $\hat{\mathcal{E}}_{1}$.
\item Lin\'eaires diagonales:  $\hat{\mathcal{L}} = \langle \lambda_{1} x_{1}  \frac{\partial}{\partial x_{1}} + \lambda_{2} x_{2}  \frac{\partial}{\partial x_{2}} , \mu_{1} x_{1}  \frac{\partial}{\partial x_{1}} + \mu_{2} x_{2}  \frac{\partial}{\partial x_{2}} \rangle$, avec $ \lambda_{1} \mu_{2} - \lambda_{2} \mu_{1} \neq 0$.
\item R\'esonnantes: $\hat{\mathcal{L}} = \langle q x_{1} \frac{\partial}{\partial x_{1}} - p x_{2} \frac{\partial}{\partial x_{2}} , \hat{a} ( x_{1}^{p} x_{2}^{q} ) ( \lambda_{1} x_{1}  \frac{\partial}{\partial x_{1}} + \lambda_{2} x_{2}  \frac{\partial}{\partial x_{2}} ) \rangle$, o\`u $\lambda_{i} \in \mathbb{R}$, $p , q \in \mathbb{N}$ et $p \lambda_{1} + q \lambda_{2} \neq 0$.
\end{enumerate}

On dispose, dans chacun de ces cas, de r\'ealisation $\mathcal{C}^{\infty}$  $\mathcal{L}$ de $\hat{\mathcal{L}}$ telle que $T_{\underline{0}} : \mathcal{L} \rightarrow \hat{\mathcal{L}}$ soit un isomorphisme.
\end{examples}
Si $\hat{X}$ et $\hat{Y}$ sont tels que $det ( \hat{X} , \hat{Y} ) \neq 0$ il existe deux 1-formes diff\'erentielles $\hat{\alpha}$ et $\hat{\beta}$, \`a coefficients dans $\hat{\mathcal{M}}_{2}$ telles que $i_{\hat{X}} \hat{\alpha} = i_{\hat{Y}} \hat{\beta} = 1$ et $i_{\hat{X}} \hat{\beta} = i_{\hat{Y}} \hat{\alpha} = 0$. La commutation de $\hat{X}$ et $\hat{Y}$ implique que les 1-formes $\hat{\alpha}$ et $\hat{\beta}$ sont ferm\'ees.

\subsubsection{Formes normales des formes ferm\'ees \`a coefficient dans $\hat{\mathcal{M}}_{n}$}
Consid\'erons un germe de 1-forme m\'eromorphe ferm\'ee $\omega$ \`a l'origine de $\mathbb{C}^{n}$; il s'\'ecrit sous la forme $\omega = \frac{\Theta}{f}$, avec $\Theta \in \Omega( \mathbb{C}^{n} )$  germes de 1-forme holomorphe et $f \in \mathcal{O} ( \mathbb{C}^{n} )$,  $f = f_{1}^{n_{1} + 1} \ldots f_{p}^{n_{p} + 1}$, les $f_{i}$ \'etant irr\'eductibles et aucun des $f_{i}$ ne divisant $\Theta$. D. Cerveau et J.-F. Mattei \cite{Cerveau2} \'etablissent la d\'ecomposition, en "\'el\'ements simples", suivante de $\omega$:

$$ \omega = \sum_{i = 1}^{p} \lambda_{i} \frac{d f_{i}}{f_{i}} + d ( \frac{ H}{f_{1}^{n_{1}} \ldots f_{p}^{n_{p} }} ) $$
o\`u $\lambda_{i} \in \mathbb{C}$ (r\'esidu de $\omega$ le long de $f_{i}$) et $H \in \mathcal{O} ( \mathbb{C}^{n} )$. Dans le cas r\'eduit o\`u $\omega$ est \`a p\^oles simples, $ \omega = \sum_{i = 1}^{p} \lambda_{i} \frac{d f_{i}}{f_{i}} + d H$, $\omega$ est dite logarithmique. La d\'ecomposition en \'el\'ements simples s'\'etend aux formes m\'eromorphes formelles ferm\'ees, c'est \`a dire \`a coefficients dans le corps des fractions $\hat{\mathcal{M}}_{n} ( \mathbb{C} )$ de $\hat{\mathcal{O}} ( \mathbb{C}^{n} )$. Pour un tel $\hat{\omega}$ on a de mani\`ere analogue:

$$ \hat{\omega} = \sum_{i = 1}^{p} \lambda_{i} \frac{d \hat{f}_{i}}{\hat{f}_{i}} + d ( \frac{ \hat{H}}{\hat{f}_{1}^{n_{1}} \ldots \hat{f}_{p}^{n_{p} }} ). $$

Consid\'erons \`a pr\'esent une 1-forme m\'eromorphe formelle ferm\'ee $\hat{\omega} = \frac{\hat{\Theta}}{\hat{f}}$ o\`u $\hat{\Theta} = \sum_{i = 1}^{n} \hat{a}_{i} d x_{i} \in \hat{\Omega}^{1}_{n}$ et les $\hat{a}_{i} , \hat{f} \in \hat{\mathcal{E}}_{n}$. Notons $\hat{f}_{\mathbb{C}}$  le complexifi\'e de $\hat{f}$. La d\'ecomposition en facteurs irr\'eductibles de $\hat{f}_{\mathbb{C}}$ est du type
$\hat{f}_{\mathbb{C}} = \hat{f}_{1}^{n_{1} + 1} \ldots \hat{f}_{q}^{n_{q} + 1} (\hat{g}_{1}\hat{h}_{1} )^{m_{1} + 1} \ldots (\hat{g}_{l}\hat{h}_{l} )^{m_{l} + 1}$, o\`u les $\hat{f}_{1}, \ldots , \hat{f}_{q}$ sont r\'eels, i.e. des complexifi\'es d'\'el\'ements de $\hat{\mathcal{E}}_{n}$, et les $\hat{g}_{j}$ et $\hat{h}_{j}$ sont complexes conjugu\'es; ce qui revient \`a dire que $\hat{g}_{j} \hat{h}_{j}$ est le complexifi\'e d'un \'el\'ement de  $\hat{\mathcal{E}}_{n}$  du type $\hat{P}_{j}^{2} + \hat{Q}_{j}^{2} = ( \hat{P}_{j} + i \hat{Q}_{j} ) ( \hat{P}_{j} - i \hat{Q}_{j} )$.

Si $\hat{\omega}^{\mathbb{C}}$ est le complexifi\'e de $\hat{\omega}$ on a:

$$ \hat{\omega}^{\mathbb{C}} = \sum_{i = 1}^{p} \lambda_{i} \frac{d \hat{f}_{i}}{\hat{f}_{i}} + \sum_{j = 1}^{l} \mu_{j} \frac{d \hat{g}_{j}}{\hat{g}_{j}} + \sum_{j = 1}^{l} \bar{\mu}_{j} \frac{d \hat{h}_{j}}{\hat{h}_{j}} + d \left( \frac{\hat{K}}{\hat{f}_{1}^{n_{1}} \ldots \hat{f}_{p}^{n_{p} } (\hat{g}_{1}\hat{h}_{1})^{m_{1}} \ldots (\hat{g}_{l}\hat{h}_{1})^{m_{l}}} \right) $$
avec $\lambda_{i} \in \mathbb{R}$, $\mu_{j} \in \mathbb{C}$, $\hat{K} \in \hat{\mathcal{O}} (\mathbb{C}^{n}_{, 0} )$ r\'eel. La forme $\hat{\omega}^{\mathbb{C}}$ est invariante sous l'action de l'automorphisme de corps $z \mapsto \bar{z}$. De sorte que $\hat{\omega}$ s'\'ecrit finalement sous la forme $(\ast \ast)$:

$$\hat{\omega} = \sum_{i = 1}^{p} \lambda_{i} \frac{d \hat{f}_{i}}{\hat{f}_{i}} + \sum_{j = 1}^{l} (a_{j} \frac{d ( \hat{P}_{j}^{2} + \hat{Q}_{j}^{2} ) }{( \hat{P}_{j}^{2} + \hat{Q}_{j}^{2} )} + b_{j} \frac{\hat{P}_{j} d \hat{Q}_{j} - \hat{Q}_{j} d \hat{P}_{j})}{( \hat{P}_{j}^{2} + \hat{Q}_{j}^{2} }) + $$ $$d \left( \frac{\hat{H}}{\hat{f}_{1}^{n_{1}} \ldots \hat{f}_{p}^{n_{p} } ( \hat{P}_{1}^{2} + \hat{Q}_{1}^{2} )^{m_{1}} \ldots ( \hat{P}_{l}^{2} + \hat{Q}_{l}^{2} )^{m_{l}}}\right) $$
avec $\lambda_{i},  a_{j} , b_{j} \in \mathbb{R}$, $\hat{f}_{i} , \hat{P}_{j} , \hat{Q}_{j}$ et $\hat{H}$ dans $\hat{\mathcal{E}}_{n}$.
On dit que $\hat{\omega}$ est logarithmique s'il est \`a p\^oles simples; c'est \`a dire les $n_{i}$ et $m_{j}$ sont tous nuls.

\subsubsection{Classification}

Revenons \`a une sous alg\`ebre commutative $\hat{\mathcal{L}} = \langle \hat{X} , \hat{Y} , [ \hat{X} , \hat{Y} ] = 0 \rangle $ de $\hat{\mathcal{X}}_{2}$. Supposons que l'un des \'el\'ements de $\hat{\mathcal{L}}$, disons $\hat{X}$, soit non singulier. Il existe un diff\'eomorphisme formel $\hat{\Phi}$ tel que
$\hat{\Phi}_{\ast}\hat{X} = \frac{\partial}{\partial x_{1}}$.  Un calcul \'el\'ementaire montre que $\hat{\Phi}_{\ast}\hat{Y}$ s'\'ecrit sous la forme $ \hat{a} ( x_{2} ) \frac{\partial}{\partial x_{1}} +  \hat{b}(x_{2} ) \frac{\partial}{\partial x_{2}}$, o\`u $\hat{a}$ et $\hat{b}$ appartiennent \`a $\hat{\mathcal{E}}_{1}$. Soient  $a$, $b \in \mathcal{E}_{1}$ et  $\Phi \in \mathrm{Diff}_{2}$ des r\'ealisations de $\hat{a}$, $\hat{b}$ et $\hat{\Phi}$ respectivement. En posant $X = \Phi^{- 1}_{\ast}  \frac{\partial}{\partial x_{1}}$ et $Y = \Phi^{- 1}_{\ast} ( a  \frac{\partial}{\partial x_{1}} +  b \frac{\partial}{\partial x_{2}} )$ on obtient une r\'ealisation $\mathcal{L} = \langle X , Y \rangle$ de $\hat{\mathcal{L}}$ telle que $T_{\underline{0}}: \mathcal{L} \rightarrow \hat{\mathcal{L}}$ est un isomorphisme d'alg\`ebre.

Dans la suite on supposera que tous les \'el\'ements de $\hat{\mathcal{L}} = \langle \hat{X} , \hat{Y} , [ \hat{X} , \hat{Y} ] = 0 \rangle $ sont singuliers, i.e. $\hat{X} ( 0 ) = \hat{Y} ( 0 ) = 0$. Comme pr\'ec\'edemment on note $\mathcal{L}^{1}$ l'alg\`ebre des 1-jets des \'el\'ements de $\hat{\mathcal{L}}$, $\hat{\alpha}$ et $\hat{\beta}$ les 1-formes ferm\'ees \`a coefficients dans $\hat{\mathcal{M}}_{2}$ d\'efinies par $i_{\hat{X}} \hat{\alpha} = i_{\hat{Y}} \hat{\beta} = 1$ et $i_{\hat{X}} \hat{\beta} = i_{\hat{Y}} \hat{\alpha} = 0$. On note $\hat{\mathcal{L}}^{\star}$ l'espace vectoriel $\hat{\mathcal{L}}^{\star} = \hat{\alpha} \mathbb{R} + \hat{\beta} \mathbb{R}$. On dira que $\hat{\mathcal{L}}$ est logarithmique si l'\'el\'ement g\'en\'erique de $\hat{\mathcal{L}}^{\star}$ est logarithmique et semi-logarithmique si $\hat{\mathcal{L}}^{\star}$ contient une 1-forme logarithmique non triviale.

Les alg\`ebres lin\'eaires diagonales $ \langle x_{1} \frac{\partial}{\partial x_{1}} , x_{2} \frac{\partial}{\partial x_{2}} \rangle$ sont logarithmiques tandis que les alg\`ebres r\'esonnantes $\langle q x_{1} \frac{\partial}{\partial x_{1}} - p x_{2} \frac{\partial}{\partial x_{2}} , \hat{q} ( x_{1}^{p} x_{2}^{q} )(\lambda_{1} x_{1} \frac{\partial}{\partial x_{1}} + \lambda_{2} x_{2} \frac{\partial}{\partial x_{2}} ) \rangle$ sont semi-logarithmiques.

\begin{proposition} Soit $\hat{\mathcal{L}} \subset \hat{\mathcal{X}}_{2}$ une alg\`ebre ab\'elienne de rang $2$ telle que $\hat{\mathcal{L}} ( 0 ) = 0$. Si $\hat{\mathcal{L}}$ est semi-logarithmique, alors $\mathcal{L}^{1}$ est non nul. De plus $\mathcal{L}^{1}$ contient un \'el\'ement non nilpotent.
\end{proposition}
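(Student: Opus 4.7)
Le plan est de raisonner par l'absurde en combinant la décomposition en éléments simples $(\ast\ast)$ d'une 1-forme logarithmique fermée avec une analyse valuative à l'origine. Soient $\hat X, \hat Y$ une base de $\hat{\mathcal{L}}$, $\hat\alpha, \hat\beta$ la base duale engendrant $\hat{\mathcal{L}}^\ast$, et $\omega = a\hat\alpha + b\hat\beta$ une 1-forme logarithmique non triviale. Par dualité, $i_{\hat X}\omega = a$ et $i_{\hat Y}\omega = b$ sont des constantes réelles avec $(a, b) \neq (0, 0)$.

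L'observation clé est la suivante : pour tout $\hat Z \in \mathcal{M}^{2}\hat{\mathcal{X}}_{2}$ (de 1-jet nul) et toute $\hat f \in \hat{\mathcal{E}}_{2}$, la règle de Leibniz donne $v_{0}(\hat Z(\hat f)) \geq v_{0}(\hat f) + 1$, où $v_{0}$ désigne la valuation à l'origine. Appliquée terme à terme à la décomposition
\[
\omega = \sum_{i} \lambda_{i}\frac{d\hat f_{i}}{\hat f_{i}} + \sum_{j}\left( a_{j}\frac{d(\hat P_{j}^{2} + \hat Q_{j}^{2})}{\hat P_{j}^{2} + \hat Q_{j}^{2}} + b_{j}\frac{\hat P_{j} d\hat Q_{j} - \hat Q_{j} d\hat P_{j}}{\hat P_{j}^{2} + \hat Q_{j}^{2}}\right) + dH,
\]
cette estimée montre que $v_{0}(i_{\hat Z}\omega) \geq 1$ ; comme $i_{\hat Z}\omega$ est une constante réelle, elle est nulle. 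La première assertion en découle : si $\mathcal{L}^{1} = \{0\}$, alors $\hat X, \hat Y \in \mathcal{M}^{2}\hat{\mathcal{X}}_{2}$, d'où $a = b = 0$, contradiction.

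Pour la seconde assertion, on suppose par l'absurde que $\mathcal{L}^{1}$ est uniquement constitué d'éléments nilpotents. Les sous-algèbres de Lie de $\mathfrak{gl}(2, \mathbb{R})$ formées de matrices nilpotentes étant de dimension au plus $1$ (théorème d'Engel), on a $\mathcal{L}^{1} = \mathbb{R}N$ avec $N$ non nulle, et à conjugaison linéaire près $N = x_{1}\partial/\partial x_{2}$. On peut alors choisir $\hat X$ avec $J^{1}\hat X = N$ et $\hat Y$ avec $J^{1}\hat Y = 0$ ; en particulier $\hat Y \in \mathcal{M}^{2}\hat{\mathcal{X}}_{2}$, et l'observation clé appliquée à $\hat Z = \hat Y$ force $b = 0$. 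On se ramène ainsi à montrer que $\hat\alpha = (Y_{2} dx_{1} - Y_{1} dx_{2})/\Delta$ (avec $\Delta = X_{1}Y_{2} - X_{2}Y_{1}$) ne peut pas être logarithmique.

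Cette dernière étape est l'obstacle principal. L'idée est d'exploiter la commutation $[\hat X, \hat Y] = 0$ degré par degré afin de contrôler la partie dominante de $\hat Y$. Au degré $2$, un calcul direct impose $Y_{1}^{(2)} = c\, x_{1}^{2}$ et $Y_{2}^{(2)} = c\, x_{1} x_{2} + \gamma\, x_{1}^{2}$ pour des constantes $c, \gamma \in \mathbb{R}$. Dans le cas générique $c \neq 0$, le terme dominant de $\Delta$ est $-c\, x_{1}^{3}$, et en divisant on voit que $\hat\alpha$ s'écrit $d(x_{2}/x_{1}) + \omega_{1}$, où $\omega_{1}$ n'a au pire que des pôles simples le long de $x_{1} = 0$. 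Or $d(x_{2}/x_{1}) = dx_{2}/x_{1} - x_{2}\,dx_{1}/x_{1}^{2}$ présente un pôle d'ordre $2$ dans le coefficient de $dx_{1}$, pôle qui ne peut être compensé par $\omega_{1}$ ; ceci contredit le caractère logarithmique de $\hat\alpha$. Le cas dégénéré $c = 0$ se traite par récurrence sur le plus petit entier $k$ tel que $Y_{1}^{(k)} \neq 0$, en utilisant les commutations d'ordre supérieur pour faire réapparaître le même type d'obstruction de pôle d'ordre $\geq 2$ ; le seul cas d'effondrement conduirait à $\hat Y$ formellement proportionnel à $\hat X$, contredisant le rang $2$ de $\hat{\mathcal{L}}$.
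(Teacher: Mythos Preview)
Your valuation argument for the first assertion ($\mathcal{L}^{1}\neq\{0\}$) is correct and is a nice variant of the paper's approach: you check term by term that contracting the logarithmic decomposition of $\omega$ against any $\hat Z\in\mathcal{M}^{2}\hat{\mathcal{X}}_{2}$ yields an element of the fraction field with $v_{0}\geq 1$, so the constant $i_{\hat Z}\omega$ must vanish.

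The second assertion, however, has a real gap. From $[\,N,Y^{(2)}\,]=0$ you correctly get $Y_{1}^{(2)}=c\,x_{1}^{2}$, $Y_{2}^{(2)}=c\,x_{1}x_{2}+\gamma\,x_{1}^{2}$, and hence that the \emph{leading homogeneous part} of $\Delta$ is $-c\,x_{1}^{3}$. But this says nothing about the irreducible factorisation of $\Delta$: nothing prevents, say, $\Delta=-c\,x_{1}^{3}+x_{2}^{4}+\cdots$, which is not divisible by $x_{1}$ at all. In that situation $x_{1}=0$ is not a pole component of $\hat\alpha$, and the claimed decomposition $\hat\alpha=d(x_{2}/x_{1})+\omega_{1}$ with $\omega_{1}$ having at most simple poles along $x_{1}=0$ is not meaningful; there is then no order-$2$ pole to contradict the logarithmic hypothesis. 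The inductive treatment of the case $c=0$ inherits the same problem.

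The paper bypasses this entirely by arguing from the logarithmic structure rather than from $\Delta$. One picks $\hat X\in\hat{\mathcal{L}}$ with $i_{\hat X}\hat\alpha=1$ and writes, after complexification, $\hat\alpha_{\mathbb{C}}=\sum\lambda_{i}\,d\hat f_{i}/\hat f_{i}+d\hat H$ with the $\hat f_{i}$ irreducible. Regularity of $i_{\hat X}\hat\alpha$ forces $\hat f_{i}\mid\hat X(\hat f_{i})$, say $\hat X(\hat f_{i})=\mu_{i}\hat f_{i}$; then $\sum\lambda_{i}\mu_{i}+\hat X(\hat H)=1$, and evaluating at $0$ (where $\hat X(\hat H)$ vanishes since $\hat X(0)=0$) gives some $\mu_{i}(0)\neq 0$. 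The first nonzero jet of that $\hat f_{i}$ is then an eigenvector of the derivation $J^{1}\hat X$ with eigenvalue $\mu_{i}(0)\neq 0$, so $J^{1}\hat X$ is non-nilpotent. This single eigenvalue argument handles both assertions at once, with no case analysis on $\hat Y$ or on $\Delta$.
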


\begin{proof}
On conserve les notations pr\'ec\'edentes; supposons que $\hat{\alpha}$ est logarithmique et $i_{\hat{X}} \hat{\alpha} = 1$. Il est plus commode de travailler avec les complexifi\'es $\hat{\alpha}_{\mathbb{C}}$ et $\hat{X}_{\mathbb{C}}$ de $\hat{\alpha}$ et $\hat{X}$ respectivement. Puisque $\hat{\alpha}$ est logarithmique on a $\hat{\alpha}_{\mathbb{C}} = \sum \lambda_{i} \frac{d \hat{f}_{i}}{f_{i}} + d \hat{H}$, $\hat{f}_{i} , \hat{H} \in \hat{\mathcal{O}} ( \mathbb{C}^{2} )$. Comme les $\hat{f}_{i}$ sont irr\'eductibles la condition $i_{\hat{X}} \hat{\alpha} = 1$ implique que $\hat{X}_{\mathbb{C}} ( \hat{f}_{i} ) = \mu_{i} \hat{f}_{i}$, $\mu_{i} \in \hat{\mathcal{O}} ( \mathbb{C}^{2} )$. On a alors $\sum \mu_{i} \hat{f}_{i} + \hat{X}_{\mathbb{C}} ( \hat{H}) =1$ et comme $\hat{X}_{\mathbb{C}} ( 0 ) = 0$ l'un des $\mu_{i} ( 0 )$, disons $\mu_{1} ( 0 )$, est non nul. La condition $\hat{X}_{\mathbb{C}} ( \hat{f}_{1} ) = \mu_{1} \hat{f}_{1}$ implique la non nullit\'e de $J^{1} \hat{X}_{\mathbb{C}}$ et donc de $J^{1} \hat{X}$. Remarquons que le premier jet non nul de $\hat{f}_{1}$ est propre pour la d\'erivation $J^{1} \hat{X}$ avec pour valeur propre $\mu_{1} ( 0 )$ diff\'erente de $0$. Par suite $J^{1} \hat{X}$ est non nilpotent. 
\end{proof}

A conjugaison lin\'eaire et constante multiplicative pr\`es les 1-jets des champs non nilpotents qui vont entrer en jeu sont les suivants:

\begin{enumerate}
\item $\lambda_{1} x_{1} \frac{\partial}{\partial x_{1}} + \lambda_{2} x_{2} \frac{\partial}{\partial x_{2}}$ sans r\'esonnance, i.e. $i_{1} \lambda_{1} + i_{2} \lambda_{2} \neq\lambda_{j}$, $j = 1 , 2$, si $i_{1}$ et  $i_{2}$ sont des entiers tels que $i_{1} + i_{j} \geq 2$.
\item $ x_{1} \frac{\partial}{\partial x_{1}} + n x_{2} \frac{\partial}{\partial x_{2}}$, $n \geq 2$ (Poincar\'e-Dulac).
\item $q x_{1} \frac{\partial}{\partial x_{1}} - p x_{2} \frac{\partial}{\partial x_{2}}$, $p , q \in \mathbb{N}^{\ast}$, $\langle p , q \rangle = 1$ (hyperbolique r\'esonnante de type $( p , q)$.
\item $ x_{1} \frac{\partial}{\partial x_{1}} + (x_{1} + x_{2}) \frac{\partial}{\partial x_{2}}$.
\item $(\alpha x_{1} + \beta x_{2})\frac{\partial}{\partial x_{1}} + (- \beta x_{1} + \alpha x_{2} ) \frac{\partial}{\partial x_{2}}$, $\alpha \neq 0$.
\item $ x_{2} \frac{\partial}{\partial x_{1}} -  x_{1} \frac{\partial}{\partial x_{2}}$ (elliptique).
\item $ x_{1} \frac{\partial}{\partial x_{1}}$ (noeud-col).
\end{enumerate}

Les 1-jets de type 1, 4, et 5 sont 1-d\'eterminants, i.e. si $\hat{X}$ a son 1-jet   de type 1 , 4 ou 5 alors $\hat{X}$ est formellement conjugu\'e \`a sa partie lin\'eaire $X_{1} = J^{1} \hat{X}$. De plus un champ $\hat{Y}$ commutant avec  $X_{1} $ est lin\'eaire. En utilisant la liste pr\'ec\'edente et la d\'ecomposition de Jordan des champs de vecteurs formels en parties semi-simple et nilpotente  \cite{Bourbaki}, \cite{Cerveau2}  on \'etablit sans peine la:

\begin{proposition} Soit $\hat{\mathcal{L}} \subset \hat{\mathcal{X}}_{2}$ une sous alg\`ebre commutative de rang 2, $\hat{\mathcal{L}} (0) = \{ 0 \}$. Si $\hat{\mathcal{L}}$ est semi-logarithmique alors, \`a conjugaison par un \'el\'ement de $\widehat{\mathrm{Diff}}(\mathbb{R}^{2}_{0})$ pr\`es, $\hat{\mathcal{L}}$ appartient \`a la liste suivante:
\begin{enumerate}
\item $\langle  x_{1} \frac{\partial}{\partial x_{1}} ,  x_{2} \frac{\partial}{\partial x_{2}} \rangle$
\item $\langle x_{1} \frac{\partial}{\partial x_{1}}  + n x_{2} \frac{\partial}{\partial x_{2}} , x_{1}^{n} \frac{\partial}{\partial x_{2}} \rangle$, o\`u $n \geq 2$.
\item $\langle q x_{1} \frac{\partial}{\partial x_{1}} - p x_{2} \frac{\partial}{\partial x_{2}} , \hat{a} (x_{1}^{p}x_{2}^{q}) (\lambda_{1} x_{1} \frac{\partial}{\partial x_{1}} + \lambda_{2} x_{2} \frac{\partial}{\partial x_{2}}) \rangle$, $\hat{a} \in \mathcal{M} .\hat{\mathcal{E}}_{1}$, $\lambda_{1} , \lambda_{2} \in \mathbb{R}$.
\item $\langle x_{1} \frac{\partial}{\partial x_{1}} +  x_{2} \frac{\partial}{\partial x_{2}} , x_{1}\frac{\partial}{\partial x_{2}} \rangle$.
\item $\langle  x_{1} \frac{\partial}{\partial x_{1}} +  x_{2} \frac{\partial}{\partial x_{2}} , x_{2} \frac{\partial}{\partial x_{1}} -  x_{1} \frac{\partial}{\partial x_{2}} \rangle$
\item $ x_{2} \frac{\partial}{\partial x_{1}} -  x_{1} \frac{\partial}{\partial x_{2}} , \alpha \hat{a} ( x_{1} \frac{\partial}{\partial x_{1}} +  x_{2} \frac{\partial}{\partial x_{2}}) + \beta \hat{a} (x_{2} \frac{\partial}{\partial x_{1}} -  x_{1} \frac{\partial}{\partial x_{2}}) \rangle$, o\`u $\hat{a}  \in \mathcal{M} .\hat{\mathcal{E}}_{1}$ et  $\alpha , \beta \in \mathbb{R}$.
\item $\langle x_{1} \frac{\partial}{\partial x_{1}} , \hat{a}(x_{2}) \frac{\partial}{\partial x_{2}} \rangle$, o\`u $\hat{a} \in \mathcal{M}^{2} .\hat{\mathcal{E}}_{1}$
\end{enumerate}
\end{proposition}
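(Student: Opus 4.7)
The plan is to run a case analysis on the Jordan type of $J^1\hat{X}$ for a well-chosen $\hat{X}\in\hat{\mathcal{L}}$, combining the normal form theorems for formal vector fields with an explicit description of the commutant of $\hat{X}$. First, since $\hat{\mathcal{L}}$ is semi-logarithmic and $\hat{\mathcal{L}}(0)=\{0\}$, Proposition~15 guarantees the existence of $\hat{X}\in\hat{\mathcal{L}}$ with $J^1\hat{X}$ non-nilpotent. Up to linear conjugation, $J^1\hat{X}$ is of one of the seven types listed just before the statement. Fix such an $\hat{X}$ and let $\hat{Y}\in\hat{\mathcal{L}}$ complete it into a rank-two commuting pair.

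In the 1-determining cases (types 1, 4, 5) the cited results ensure that $\hat{X}$ is formally conjugate to $X_1=J^1\hat{X}$, and that any formal field commuting with $X_1$ is itself linear. Solving the linear system $[X_1,\hat{Y}]=0$ on the space of linear vector fields and then applying an additional linear change of variables to diagonalise or normalise the pair $(\hat{X},\hat{Y})$ yields exactly families (1), (4) and (5). In the Poincar\'e--Dulac case (type 2) the normal form reduces $\hat{X}$ to $x_1\partial_1+(nx_2+\mu x_1^n)\partial_2$. If $\mu=0$ we are back in case (1); if $\mu\neq 0$, a direct computation shows the commutant of $\hat{X}$ inside $\mathcal{M}\hat{\mathcal{X}}_2$ is spanned by $\hat{X}$ and $x_1^n\partial_2$, giving family (2). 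In the noeud-col case (type 7) one linearises $\hat{X}$ to $x_1\partial_1$ (Poincar\'e), computes its commutant as $\hat{a}(x_2)\partial_2$-fields, and observes that if $\hat{a}'(0)\neq 0$ one falls back on case (1), whence the restriction $\hat{a}\in\mathcal{M}^2\hat{\mathcal{E}}_1$.

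The remaining hyperbolic resonant case (type 3) and elliptic case (type 6) require the Jordan-type splitting $\hat{X}=\hat{S}+\hat{N}$ with $[\hat{S},\hat{N}]=0$ recalled in \cite{Martinet}. After conjugating so that $\hat{S}$ is linear, the commutant of $\hat{X}$ consists of fields of the form $\hat{a}(x_1^px_2^q)(\lambda_1 x_1\partial_1+\lambda_2 x_2\partial_2)$ (respectively $\alpha\hat{a}(x_1^2+x_2^2)(x_1\partial_1+x_2\partial_2)+\beta\hat{a}(x_1^2+x_2^2)(x_2\partial_1-x_1\partial_2)$), exactly as already used in Section~1.2.3.2; a rank-two commuting $\hat{Y}$ therefore has this shape, producing cases (3) and (6). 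The semi-logarithmic assumption then enters to exclude parameters for which $\hat{\mathcal{L}}^\star$ would fail to contain a logarithmic form, fixing the admissible coefficients.

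The main obstacle I anticipate is the bookkeeping in the hyperbolic and elliptic resonant cases: one must verify that the shape of the commutant computed for $\hat{S}$ is still preserved when one commutes with the full non-linear $\hat{X}=\hat{S}+\hat{N}$, and that semi-logarithmicity of the associated dual 1-forms $\hat{\alpha},\hat{\beta}$ genuinely translates into the explicit constraints on $\hat{a}$ and the exponents $(p,q)$ appearing in (3) and (6). Once this compatibility is confirmed, the list (1)--(7) is obtained by collating the seven sub-cases, and the conjugating diffeomorphism provided by the normal form theorems belongs to $\widehat{\mathrm{Diff}}(\mathbb{R}^2_0)$ as required.
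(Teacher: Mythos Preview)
Your plan is correct and follows exactly the route the paper sketches: the authors' entire argument is the single sentence ``En utilisant la liste pr\'ec\'edente et la d\'ecomposition de Jordan des champs de vecteurs formels en parties semi-simple et nilpotente \ldots\ on \'etablit sans peine la Proposition'', and your case-by-case expansion (1-determining types $1,4,5$; Poincar\'e--Dulac for type $2$; commutant of the linearised semi-simple part for types $3,6,7$) is precisely what that sentence is asking the reader to carry out. Two minor adjustments: the proposition guaranteeing a non-nilpotent $J^1\hat{X}$ is numbered~17 in the paper, not~15; and the semi-logarithmic hypothesis is used only at that initial step---the constraints $\hat{a}\in\mathcal{M}\hat{\mathcal{E}}_1$ in (3), (6) and $\hat{a}\in\mathcal{M}^2\hat{\mathcal{E}}_1$ in (7) arise simply to avoid overlap with the already-treated linearisable cases, not from a further appeal to~$\hat{\mathcal{L}}^\star$.
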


Remarquons que les alg\`ebres de type 1 et 5 sont logarithmiques et les autres sont semi-logarithmiques. On constate ainsi que logarithmique implique lin\'earisable. Les alg\`ebres de type 4 sont de type 2 lorsque $n = 1$, mais sont lin\'earisables. On peut en fait donner des formes normales plus pr\'ecises pour les $\hat{a}$, $\hat{l}_{1}$ et $\hat{l}_{2}$, mais ceci ne sera pas utile ($\hat{a} (t) = \frac{t^{s + 1}}{1 - \lambda t^{s}}$, $s \in \mathbb{N}^{\ast}$ et $\lambda \in \mathbb{R}$). On d\'eduit de tout cela le:

\begin{theorem} Soit $\hat{\mathcal{L}} \subset \hat{\mathcal{X}}_{2}$ une sous alg\`ebre ab\'elienne de rang $2$. Il existe une r\'ealisation $\mathcal{C}^{\infty}$ $\mathcal{L}$ de $\hat{\mathcal{L}}$ telle que $T_{\underline{0}}: \mathcal{L} \rightarrow \hat{\mathcal{L}}$  soit un isomorphisme dans les deux cas suivants:
\begin{enumerate}
\item $\hat{\mathcal{L}}(0) \neq 0$.
\item $\hat{\mathcal{L}}(0) = 0$ et $\hat{\mathcal{L}}$ est semi-logarithmique.
\end{enumerate}
\end{theorem}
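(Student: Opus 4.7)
Le plan est de décomposer l'énoncé selon les deux cas et d'utiliser dans chacun un résultat déjà disponible en amont.

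Pour le premier cas ($\hat{\mathcal{L}}(0) \neq 0$), je choisirais un élément $\hat{X} \in \hat{\mathcal{L}}$ non singulier et appliquerais le théorème de redressement formel pour construire $\hat{\Phi} \in \widehat{\mathrm{Diff}}(\mathbb{R}^{2}_{0})$ vérifiant $\hat{\Phi}_{\ast}\hat{X} = \partial/\partial x_{1}$. La commutation avec $\partial/\partial x_{1}$ impose à tout autre élément de $\hat{\Phi}_{\ast}\hat{\mathcal{L}}$ d'être de la forme $\hat{a}(x_{2})\partial/\partial x_{1} + \hat{b}(x_{2})\partial/\partial x_{2}$, avec $\hat{a}, \hat{b} \in \hat{\mathcal{E}}_{1}$. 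Je choisirais alors des réalisations de Borel $a$, $b$ et $\Phi$ de ces données, puis poserais $\mathcal{L} = \Phi^{-1}_{\ast} \langle \partial/\partial x_{1},\, a(x_{2})\partial/\partial x_{1} + b(x_{2})\partial/\partial x_{2} \rangle$. La commutativité est exacte car le crochet $[\partial/\partial x_{1},\, a(x_{2})\partial/\partial x_{1} + b(x_{2})\partial/\partial x_{2}]$ s'annule identiquement, les coefficients ne dépendant que de $x_{2}$.

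Pour le second cas ($\hat{\mathcal{L}}(0) = 0$ et $\hat{\mathcal{L}}$ semi-logarithmique), j'invoquerais la classification de la Proposition 15 pour ramener $\hat{\mathcal{L}}$, modulo l'action d'un $\hat{\Phi} \in \widehat{\mathrm{Diff}}(\mathbb{R}^{2}_{0})$, à l'une des sept familles listées. Les types 1, 2, 4 et 5 ne font intervenir que des champs polynomiaux et se réalisent trivialement. Pour les types 3, 6 et 7, qui font apparaître un paramètre formel $\hat{a} \in \hat{\mathcal{E}}_{1}$, je fixerais une réalisation de Borel $a$ de $\hat{a}$, construirais l'algèbre $\mathcal{L}'$ correspondante dans la forme normale, puis obtiendrais $\mathcal{L} = \Phi^{-1}_{\ast}\mathcal{L}'$ en choisissant une réalisation $\mathcal{C}^{\infty}$ $\Phi$ de $\hat{\Phi}$.

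Le point à soigner, sans qu'il y ait de réelle difficulté, est que les algèbres ainsi construites soient exactement abéliennes et pas seulement formellement. L'observation clé est que dans chacun des types 3, 6 et 7 la série $\hat{a}$ est évaluée en un invariant exact du premier générateur: $x_{1}^{p}x_{2}^{q}$ dans le type 3, annulé par $qx_{1}\partial/\partial x_{1} - px_{2}\partial/\partial x_{2}$; $x_{1}^{2} + x_{2}^{2}$ dans le type 6, annulé par la rotation $x_{2}\partial/\partial x_{1} - x_{1}\partial/\partial x_{2}$; et la seule variable $x_{2}$ dans le type 7, où les deux générateurs ont des supports disjoints. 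Dans chaque cas la fonction $\mathcal{C}^{\infty}$ obtenue en composant $a$ avec cet invariant reste un invariant exact, de sorte que les crochets correspondants s'annulent identiquement. La principale difficulté du théorème a en réalité été absorbée dans la preuve de la Proposition 15; une fois celle-ci acquise, la réalisation de Borel tombe presque mécaniquement.
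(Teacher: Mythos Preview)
Your proposal is correct and follows essentially the same route as the paper: case~1 is handled exactly as in the discussion preceding the theorem (redressement of a non-singular $\hat{X}$, then Borel realization of the one-variable coefficients), and case~2 is reduced via the classification to realizing the parameter~$\hat{a}$ and the conjugating diffeomorphism~$\hat{\Phi}$. Your added verification that the realized algebras are \emph{exactly} abelian (because $x_{1}^{p}x_{2}^{q}$, $x_{1}^{2}+x_{2}^{2}$, and $x_{2}$ are genuine first integrals of the respective first generators) is a welcome point the paper leaves implicit; the only slip is the reference number---the classification you invoke is Proposition~18, not Proposition~15.
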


\begin{proof}
Soit $\hat{\Phi}$ un \'el\'ement de $\widehat{\mathrm{Diff}}(\mathbb{R}^{2}_{0})$ tel que $\hat{\Phi}_{\ast}\hat{\mathcal{L}} = \hat{\mathcal{L}}_{0}$ soit comme dans la Proposition 18. Si $\hat{\mathcal{L}}_{0}$ est de type 3, 5, 6 ou 7 on choisit une r\'ealisation $\mathcal{C}^{\infty}$, $a \in \mathcal{M}\mathcal{E}_{1}$, de $\hat{a}$. On construit ainsi des alg\`ebres $\mathcal{L}_{0}$, r\'ealisations $\mathcal{C}^{\infty}$ de $\hat{\mathcal{L}}_{0}$. La sous alg\`ebre $\mathcal{L} = \Phi_{\ast} \mathcal{L}_{0}$, o\`u $\Phi$ est une r\'ealisation de $\hat{\Phi}$, satisfait le th\'eor\`eme. 
\end{proof}

\begin{remark} Nous n'avons pas abord\'e ici le cas g\'en\'eral non semi-loga-rithmique pour lequel il n'y a pas \`a notre connaissance de mod\`eles comme dans la Proposition 18.
\end{remark}

\section{Equations implicites}
Rappelons les d\'efinitions et propri\'et\'es des id\'eaux  ferm\'es de $\mathcal{E}_{n}$, afin d'\'enoncer le principal r\'esultat de J. C. Tougeron \cite{Tougeron1} et \cite{Tougeron2}, que nous utiliserons. Nous conservons ses notations. Soient $\Omega$ un ouvert de $\mathbb{R}^{n}$ et $\mathcal{E}(\Omega)$ l'alg\`ebre des fonctions num\'eriques ind\'efiniment d\'erivables sur $\Omega$ muni de sa structure classique d'espace de Frechet.  Etant donn\'es $\underline{a} \ \in \ \Omega$ et $ \varphi \ \in \ \mathcal{E} (\Omega)$ on note $T_{\underline{a}} \varphi$ le d\'eveloppement en s\'erie formelle de $\varphi$ en $a$. Si $I$ est un id\'eal, on note $T_{\underline{a}} I = \{ T_{\underline{a}} \varphi \  / \ \varphi \ \in \ I \} \subset \hat{\mathcal{E}}_{n}$. On dira que $I$ est ferm\'e si c'est un ferm\'e de $\mathcal{E} (\Omega)$ muni de sa structure d'espace de Frechet. Nous avons le r\'esultat suivant, d\^u \`a Whitney, qui carat\'erise les id\'eaux ferm\'es de $\mathcal{E} ( \Omega )$:

\begin{theorem} \cite{Tougeron1}, \cite{Malgrange} Un id\'eal $I$ de $\mathcal{E} (\Omega)$ est ferm\'e si et seulement si pour tout $\varphi$ appartenant \`a $\mathcal{E} (\Omega)$ tel que $T_{\underline{a}} \varphi  \ \in \ T_{\underline{a}} I$ pour tout $\underline{a}$ \'el\'ement de $\Omega$, alors $\varphi$ appartient \`a $I$.
\end{theorem}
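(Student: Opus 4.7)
The plan is to separate the two implications, as they rest on quite different ingredients. The direction ``Taylor criterion $\Rightarrow$ $I$ ferm\'e'' is a formal consequence of the Noetherian structure of $\hat{\mathcal{E}}_n$: if $(\varphi_\nu)$ is a sequence in $I$ converging to $\varphi$ in the Fr\'echet topology of $\mathcal{E}(\Omega)$, then every partial derivative $\partial^\alpha \varphi_\nu$ converges uniformly on compacts, so $T_{\underline{a}}\varphi_\nu \to T_{\underline{a}}\varphi$ coefficient by coefficient, i.e.\ in the $\mathfrak{m}$-adic topology of $\hat{\mathcal{E}}_n$. Since $\hat{\mathcal{E}}_n = \mathbb{R}[[x_1,\ldots,x_n]]$ is Noetherian local, Krull's theorem ensures that every ideal of $\hat{\mathcal{E}}_n$ is $\mathfrak{m}$-adically closed; in particular $T_{\underline{a}}I$ is closed and contains $T_{\underline{a}}\varphi$. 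The Taylor hypothesis then gives $\varphi \in I$, so $I$ is closed.

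For the converse, assume $I$ is closed and that $\varphi \in \mathcal{E}(\Omega)$ satisfies $T_{\underline{a}}\varphi \in T_{\underline{a}}I$ for every $\underline{a} \in \Omega$. I plan to build a sequence of elements of $I$ converging to $\varphi$ in the Fr\'echet topology, from which $\varphi \in \overline{I} = I$ follows. At each point $\underline{a}$ the hypothesis furnishes some $\psi_{\underline{a}} \in I$ with $T_{\underline{a}}\psi_{\underline{a}} = T_{\underline{a}}\varphi$, so that $\varphi - \psi_{\underline{a}}$ is $\mathcal{C}^{\infty}$-flat at $\underline{a}$. By Taylor's formula with remainder, given $k \in \mathbb{N}$ and $\delta > 0$, there exists an open ball $U_{\underline{a}}$ on which $|\partial^\alpha(\varphi - \psi_{\underline{a}})| < \delta$ for all $|\alpha| \le k$. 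Choosing a locally finite $\mathcal{C}^{\infty}$ partition of unity $(\chi_i)$ subordinate to a countable subcover $(U_{\underline{a}_i})$ and setting $\psi := \sum_i \chi_i \psi_{\underline{a}_i}$, each term lies in the ideal $I$, the sum is locally finite, so $\psi \in I$; and $\varphi - \psi = \sum_i \chi_i(\varphi - \psi_{\underline{a}_i})$, which by the Leibniz rule is controlled on any fixed compact by $C\,\delta$ in the $\mathcal{C}^{k}$ seminorm.

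The delicate point and main obstacle is the circularity between the partition of unity and the choice of neighborhoods: the Leibniz constant $C$ depends on the derivatives of the $\chi_i$, which in turn are affected by how small the $U_{\underline{a}_i}$ are chosen. The way out is a diagonal procedure. Fix an exhaustion $\Omega = \bigcup_m K_m$; at stage $m$, first fix the partition of unity on a neighborhood of $K_m$, freezing a constant $C_m$, and only afterwards shrink each $U_{\underline{a}}$ so that $|\partial^\alpha(\varphi - \psi_{\underline{a}})| < 1/(mC_m)$ on $U_{\underline{a}}$ for all $|\alpha| \le m$. The resulting sequence $(\psi_m) \subset I$ converges to $\varphi$ in every $\mathcal{C}^{k}(K_\ell)$ seminorm, hence in the full Fr\'echet topology; closedness of $I$ then concludes the argument.
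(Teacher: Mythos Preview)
The paper does not supply its own proof of this statement: it is quoted as a classical theorem of Whitney (with references to Tougeron and Malgrange) and then used as a tool. So there is no proof in the paper to compare against; the comments below concern only the internal soundness of your sketch.

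In the easy direction your idea is right but the topology is misidentified. Coefficient-by-coefficient convergence of Taylor series is convergence in the product topology on $\hat{\mathcal{E}}_n \cong \prod_\alpha \mathbb{R}$, which is strictly coarser than the $\mathfrak{m}$-adic topology (e.g.\ the constant series $1/\nu$ tend to $0$ for the former but not the latter). What saves the argument is that Krull's identity $J = \bigcap_k (J + \mathfrak{m}^k)$ exhibits any ideal $J\subset\hat{\mathcal{E}}_n$ as an intersection of sets each constraining only finitely many coefficients, hence product-closed; so $T_{\underline a}I$ is closed even for the coarser topology and your conclusion survives once the wording is fixed.

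The hard direction has a genuine gap. Your diagonal procedure proposes to ``first fix the partition of unity on a neighbourhood of $K_m$, freezing a constant $C_m$, and only afterwards shrink each $U_{\underline a}$''. But the partition of unity is subordinate to the cover $\{U_{\underline a}\}$: once the $\chi_i$ are fixed their supports are fixed, and there is nothing left to shrink; if instead you shrink the $U_{\underline a}$ first, the $\chi_i$ must change with them and $C_m$ is no longer frozen. Flatness of $\varphi-\psi_{\underline a}$ is a pointwise condition at $\underline a$; it yields smallness only on a neighbourhood whose radius you cannot prescribe in advance, and there is no uniform bound on the higher derivatives of $\psi_{\underline a}$ as $\underline a$ ranges over $K_m$. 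This is exactly why Whitney's spectral theorem is not a routine partition-of-unity exercise: the proofs in the references the paper cites go through either a syzygy/flatness argument over a polynomial subring combined with the open mapping theorem, or through the Whitney extension theorem, precisely to manufacture the missing uniformity.
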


H\"ormander a montr\'e que l'id\'eal $\varphi \mathcal{E} (\Omega)$ est ferm\'e lorsque $\varphi$ est un polyn\^ome. Lojasiewicz a montr\'e le m\^eme r\'esultat sous l'hypoth\`ese que $\varphi$ est analytique sur $\Omega$. Plus g\'en\'eralement Malgrange a prouv\'e que:

\begin{theorem} \cite{Malgrange}, \cite{Tougeron1} Soient $\varphi_{1}$, $\ldots$ , $\varphi_{p}$ des fonctions analytiques sur $\Omega$ et $<\varphi_{1} , \ldots , \varphi_{n}>$ l'id\'eal engendr\'e par $\varphi_{1}$, $\ldots$ , $\varphi_{p}$, alors $<\varphi_{1} , \ldots , \varphi_{n}>$ est ferm\'e.
\end{theorem}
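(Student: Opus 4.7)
Mon plan est d'appliquer le crit\`ere de Whitney (Th\'eor\`eme 20 ci-dessus): il suffit de prouver que si $\varphi \in \mathcal{E}(\Omega)$ satisfait $T_{\underline{a}}\varphi \in T_{\underline{a}} I$ pour tout $\underline{a} \in \Omega$, o\`u $I = \langle \varphi_{1}, \ldots, \varphi_{p}\rangle$, alors $\varphi \in I$. L'id\'ee est de construire la d\'ecomposition $\varphi = \sum_{i=1}^{p} \psi_{i} \varphi_{i}$ d'abord localement au voisinage de chaque point de $\Omega$, puis de la globaliser par partition de l'unit\'e.

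Pour l'\'etape locale, on fixe $\underline{a} \in \Omega$. On observe d'abord que, gr\^ace au th\'eor\`eme de Borel classique, l'image $T_{\underline{a}} I$ co\"incide avec l'id\'eal $\langle T_{\underline{a}}\varphi_{1}, \ldots, T_{\underline{a}}\varphi_{p}\rangle$ de $\hat{\mathcal{E}}_{n}$ (tout \'el\'ement formel $\sum \hat{\psi}_{i}\, T_{\underline{a}}\varphi_{i}$ se rel\`eve par Borel en une somme de la forme $\sum \psi_{i} \varphi_{i} \in I$, quitte \`a multiplier les $\psi_{i}$ par une fonction plateau). L'hypoth\`ese se traduit donc en une relation formelle $T_{\underline{a}} \varphi = \sum_{i} \hat{\psi}_{i}\, T_{\underline{a}}\varphi_{i}$ avec $\hat{\psi}_{i} \in \hat{\mathcal{E}}_{n}$. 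Le point central est d'en d\'eduire l'existence d'un voisinage ouvert $U_{\underline{a}}$ de $\underline{a}$ et de fonctions $\psi_{i}^{\underline{a}} \in \mathcal{E}(U_{\underline{a}})$ v\'erifiant $\varphi_{|U_{\underline{a}}} = \sum_{i} \psi_{i}^{\underline{a}} \, \varphi_{i|U_{\underline{a}}}$. C'est ici qu'intervient de fa\c{c}on essentielle l'analyticit\'e des $\varphi_{i}$: les th\'eor\`emes de pr\'eparation et de division de Malgrange (\cite{Malgrange}, \cite{Tougeron1}) assurent que l'id\'eal $I_{\underline{a}} := \langle \varphi_{1}, \ldots, \varphi_{p}\rangle \mathcal{E}_{n, \underline{a}}$ du germe co\"incide avec l'image r\'eciproque de $\langle T_{\underline{a}}\varphi_{1}, \ldots, T_{\underline{a}}\varphi_{p}\rangle\, \hat{\mathcal{E}}_{n}$ sous $T_{\underline{a}}$, ce qui donne exactement la d\'ecomposition locale voulue.

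Pour la globalisation, on choisit un recouvrement localement fini $(U_{\alpha})_{\alpha}$ de $\Omega$ par de tels voisinages, puis une partition de l'unit\'e $\mathcal{C}^{\infty}$ $(\rho_{\alpha})_{\alpha}$ qui lui est subordonn\'ee. En posant $\psi_{i} := \sum_{\alpha} \rho_{\alpha}\, \psi_{i}^{\alpha} \in \mathcal{E}(\Omega)$, un calcul direct donne
$$\sum_{i} \psi_{i} \varphi_{i} = \sum_{\alpha} \rho_{\alpha} \Bigl(\sum_{i} \psi_{i}^{\alpha} \varphi_{i}\Bigr) = \sum_{\alpha} \rho_{\alpha}\, \varphi = \varphi,$$
donc $\varphi \in I$; on conclut par le Th\'eor\`eme 20.

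L'obstacle principal est clairement l'\'etape locale: passer d'une \'egalit\'e purement formelle des jets \`a une \'egalit\'e $\mathcal{C}^{\infty}$ sur un vrai voisinage. Cette \'etape n'a aucune raison a priori d'\^etre valable et elle tombe en d\'efaut d\`es que l'on rel\^ache l'hypoth\`ese d'analyticit\'e sur les $\varphi_{i}$. Le c{\oe}ur du travail r\'eside donc dans la technologie de pr\'eparation de Malgrange, qui repose sur les in\'egalit\'es de Lojasiewicz; le cas polyn\^omial trait\'e par H\"ormander en est un pr\'ecurseur et le cas d'un seul g\'en\'erateur analytique (Lojasiewicz) une \'etape interm\'ediaire, \'etendue par Malgrange au cas de $p$ g\'en\'erateurs analytiques.
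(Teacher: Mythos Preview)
Le papier ne donne aucune preuve de cet \'enonc\'e: il s'agit d'un r\'esultat cit\'e, attribu\'e \`a Malgrange (avec les r\'ef\'erences \cite{Malgrange}, \cite{Tougeron1}), et le texte passe imm\'ediatement \`a la d\'efinition suivante sans d\'emonstration. Il n'y a donc rien \`a comparer c\^ot\'e papier.

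Ton plan est le sch\'ema standard de la preuve: crit\`ere de Whitney pour se ramener au probl\`eme ponctuel, division de Malgrange au niveau des germes pour passer de l'appartenance formelle \`a l'appartenance $\mathcal{C}^{\infty}$ locale, puis partition de l'unit\'e pour globaliser. L'architecture est correcte et c'est bien dans l'\'etape locale que se concentre toute la difficult\'e. Un point \`a garder en t\^ete: l'\'enonc\'e que tu invoques (\og l'id\'eal des germes $I_{\underline{a}}$ est l'image r\'eciproque par $T_{\underline{a}}$ de l'id\'eal formel\fg) est pr\'ecis\'ement la version locale (pour les germes) du th\'eor\`eme que tu veux d\'emontrer; il ne faut donc pas le pr\'esenter comme une cons\'equence imm\'ediate de la pr\'eparation, mais bien comme le c{\oe}ur technique, qui requiert la platitude de $\hat{\mathcal{E}}_{n}$ sur l'anneau des germes analytiques et le th\'eor\`eme de division $\mathcal{C}^{\infty}$ de Malgrange. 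Une fois ce point admis ou d\'emontr\'e, ta globalisation par partition de l'unit\'e est sans surprise et fonctionne.
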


La notion d'id\'eaux ferm\'es se g\'en\'eralise \`a ceux de $\mathcal{E}_{n}$ de type fini.

\begin{definition} Soit $I = <\varphi_{1} , \ldots , \varphi_{p}>$ un id\'eal de $\mathcal{E}_{n}$. On dit que $I$ est ferm\'e s'il existe un voisinage $\Omega$ de $\underline{0}$ et des repr\'esentants $\tilde{\varphi}_{1}$, $\ldots$ , $\tilde{\varphi}_{p}$ de $\varphi_{1}, \ldots , \varphi_{p}$ respectivement tels que $< \tilde{\varphi}_{1} , \ldots , \tilde{\varphi}_{p} >$ soit ferm\'e dans $\mathcal{E} (\Omega)$.
\end{definition}

\begin{example} Soit $\varphi \ \in \ \mathcal{E}_{2}$ non plat, i.e. $T_{\underline{0}} \varphi \neq 0$. Alors $\varphi \mathcal{E}_{2}$ est ferm\'e. En effet, d'apr\`es D. Cerveau et R. Mattei \cite{Cerveau2}, il existe un diff\'eomorphisme formel $\hat{\Phi}$ de $\widehat{\mathrm{Diff}} (\mathbb{R}^{2}_{0})$ tel que $(T_{\underline{0}} \varphi) \circ \hat{\Phi} = P$, $P$ \'etant un polyn\^ome. Soit $\Phi$ une r\'ealisation de Borel de $\hat{\Phi}$, $P \circ \Phi^{- 1^{}}$ est une r\'ealisation de Borel de $T_{\underline{0}} \varphi$ et est $\mathcal{C}^{\infty}$ conjugu\'e \`a un polyn\^ome. On en d\'eduit que $(P \circ \Phi^{- 1}) \mathcal{E}_{2}$ est ferm\'e, et par suite $\varphi \mathcal{E}_{2}$.
\end{example}

Voici un exemple d'id\'eal non ferm\'e donn\'e par Tougeron \cite{Tougeron1}:

\begin{example} La fonction $ f^{+} = y^{2} + \exp{( - \frac{1}{x^{2}})}$ n'engendre pas un id\'eal ferm\'e. En effet $\forall \ \underline{a} \ \in \mathbb{R}^{2}$, $T_{\underline{a}} \exp{( - \frac{1}{x^{2}})} \ \in \ T_{\underline{a}} ( f^{+} \mathcal{E} (\mathbb{R}^{2})$, cependant $\exp{( - \frac{1}{x^{2}})}$ n'appartient pas \`a $ f^{+} \mathcal{E} (\mathbb{R}^{2})$.
\end{example}

\begin{definition} Soit $I = <\varphi_{1} , \ldots , \varphi_{p}>$ l'id\'eal de $\mathcal{E} (\Omega)$ engendr\'e par $\varphi_{1}$, $\ldots$ , $\varphi_{p}$. On appelle module des relations de $I$ le $\mathcal{E} (\Omega)$-module $\{ (h_{1} , \ldots , h_{p} ) \ / h_{i} \ \in \ \mathcal{E}(\Omega) , \sum_{i = 1}^{p} h_{i} \varphi_{i} = 0 \}$.
\end{definition}

On a le r\'esultat remarquable suivant, d\^u \`a Malgrange:

\begin{theorem} Soit $I = <\varphi_{1} , \ldots , \varphi_{p}>$ un id\'eal de $\mathcal{E}_{n}$, $\hat{h}_{1}$, $\ldots$ , $\hat{h}_{p}$ des \'el\'ements de $\hat{\mathcal{E}}_{n}$ tels que $ \sum_{i = 1}^{p} \hat{h}_{i} T_{\underline{0}} \varphi_{i} = 0$. Si $I$ est ferm\'e, alors il existe des \'el\'ements $h_{1}, \ldots , h_{p}$ de $\mathcal{E}_{n}$ tels que $\sum_{i = 1}^{p} h_{i}  \varphi_{i} = 0$.
\end{theorem}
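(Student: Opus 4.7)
The plan is to combine Borel's realization theorem with the closedness hypothesis on $I$, reading the conclusion in its natural nontrivial sense: the resulting $h_i$ should be Borel realizations of the $\hat h_i$, i.e.\ $T_{\underline 0} h_i = \hat h_i$ (otherwise $h_i = 0$ works trivially).

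First, I would apply Borel's theorem component by component to produce smooth germs $\tilde h_1, \ldots, \tilde h_p \in \mathcal E_n$ with $T_{\underline 0}\tilde h_i = \hat h_i$, and then form the defect
$$\psi := \sum_{i=1}^p \tilde h_i \varphi_i \in I.$$
By the hypothesis $\sum \hat h_i T_{\underline 0}\varphi_i = 0$, one has $T_{\underline 0}\psi = \sum \hat h_i T_{\underline 0}\varphi_i = 0$, so $\psi$ belongs to the flat ideal $\mathcal M_n^\infty$. The entire theorem then reduces to the following flat-division claim: any $\psi \in I \cap \mathcal M_n^\infty$ admits a decomposition $\psi = \sum_i g_i \varphi_i$ whose coefficients $g_i \in \mathcal E_n$ are themselves flat at $\underline 0$. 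Indeed, granted this, the germs $h_i := \tilde h_i - g_i$ satisfy both $T_{\underline 0} h_i = \hat h_i$ and $\sum h_i\varphi_i = 0$, which is the desired realization.

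To establish the flat-division claim I would invoke the closedness criterion (Theorem 22) applied to the module of relations. At every point $\underline a$ near but distinct from $\underline 0$, the trivial decomposition $\psi = \sum \tilde h_i \varphi_i$ already places $T_{\underline a}\psi$ in $T_{\underline a} I$; at $\underline 0$ itself the vanishing $T_{\underline 0}\psi = 0$ yields the formal decomposition with all coefficients equal to zero (hence trivially flat). A patching argument using a partition of unity, combined with the Whitney--Malgrange characterization of closed ideals and the closedness of the analytic ideals generated by representatives of the $\varphi_i$ (Theorem 23 being the prototype), would then assemble these pointwise formal decompositions into a global smooth decomposition whose coefficients lie in $\mathcal M_n^\infty$.

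The principal obstacle is precisely this gluing step: turning pointwise formal solvability with coefficients flat at $\underline 0$ into a single smooth global decomposition whose coefficients are simultaneously flat at $\underline 0$ is not automatic from Theorem 22 alone, and it is here that the closedness hypothesis on $I$ is genuinely exploited rather than merely invoked. That the hypothesis cannot be dropped is visible already in Example 25, where the non-closed ideal generated by $f^+ = y^2 + \exp(-1/x^2)$ fails the analogous flat-membership property for $\psi = \exp(-1/x^2)$.
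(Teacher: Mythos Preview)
The paper does not prove this statement at all: it is introduced as ``le r\'esultat remarquable suivant, d\^u \`a Malgrange'' and stated without proof, with the references \cite{Malgrange}, \cite{Tougeron1}, \cite{Tougeron2} serving as the justification. So there is no in-paper argument to compare your proposal against.

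On your sketch itself: the reduction via Borel lifts $\tilde h_i$ to the flat-division claim (any $\psi\in I\cap\mathcal M_n^\infty$ admits a decomposition $\psi=\sum g_i\varphi_i$ with all $g_i$ flat) is the standard route, and you correctly identify this claim as the entire content of the theorem. However, what you write after that point is not a proof but an accurate description of where the work lies. The Whitney criterion (Theorem~22 here) concerns membership in a closed ideal, not the existence of a decomposition with prescribed Taylor coefficients; passing from pointwise formal relations to a global smooth relation with flat coefficients requires Malgrange's theory of closed submodules (flatness of $\hat{\mathcal E}_n$ over $\mathcal E_n$ for the relation module, or the Whitney spectral theorem for modules), which you invoke only by name. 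So your proposal locates the difficulty precisely but does not surmount it. Your reading of the statement --- that one must have $T_{\underline 0}h_i=\hat h_i$, since otherwise $h_i=0$ trivially works --- is correct and matches how the result is used later in the paper (e.g.\ in the proof of Proposition~34).
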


Rappelons dans le m\^eme ordre d'id\'ee le c\'el\`ebre r\'esultat d'approximation d'Artin \cite{Artin}:

\begin{theorem} Soit $F$ un germe d'application holomorphe \`a l'origine de $\mathbb{C}^{n} \times \mathbb{C}^{p}$. On consid\`ere l'\'equation implicite:
\begin{equation} F( x , y ) = 0
\end{equation}
o\`u $ x$ appartient \`a  $\mathbb{C}^{n}$ et  $y$ \`a $\mathbb{C}^{p}$. Si $(10)$ poss\`ede une solution formelle $\hat{y} \in \hat{\mathcal{O}}(\mathbb{C}^{n}_{, 0})^{p}$ et $k \in \mathbb{N}$ est fix\'e, alors il existe une solution convergente $y_{k} \in\mathcal{O}(\mathbb{C}^{n}_{, 0})^{p}$ de $(10)$ telle que le jet d'ordre $k$ de $y_{k}$ v\'erifie $J^{k} y_{k}  = J^{k} \hat{y}$.
\end{theorem}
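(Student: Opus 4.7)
Le plan consiste \`a ramener le probl\`eme \`a la version sans contrainte de jet du th\'eor\`eme (existence d'une solution convergente d\`es qu'il existe une solution formelle), puis \`a invoquer cette version due \`a Artin. Pour cela on utilise le truc classique suivant. Soit $P_{k}$ le polyn\^ome de Taylor de $\hat{y}$ de degr\'e $\leq k$. Chaque composante $\hat{y}_{j} - P_{k,j}$ \'etant d'ordre au moins $k+1$, on peut l'\'ecrire sous la forme $\sum_{i=1}^{N} M_{i}(x) \hat{v}_{j,i}(x)$ o\`u $M_{1}, \ldots, M_{N}$ sont les mon\^omes de degr\'e $k+1$ en $x = (x_{1}, \ldots, x_{n})$ et $\hat{v}_{j,i} \in \hat{\mathcal{O}}(\mathbb{C}^{n}_{,0})$ (l'\'ecriture n'est pas unique, mais seule l'existence nous importe). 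On consid\`ere alors le syst\`eme auxiliaire
\[
G(x, v) := F\Bigl(x, \bigl(P_{k,j}(x) + \textstyle\sum_{i} M_{i}(x) v_{j,i}\bigr)_{j=1,\ldots,p}\Bigr) = 0
\]
aux inconnues $v = (v_{j,i})$. Ce syst\`eme est holomorphe \`a l'origine de $\mathbb{C}^{n} \times \mathbb{C}^{pN}$, et $\hat{v} = (\hat{v}_{j,i})$ en est une solution formelle.

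La version sans contrainte de jet du th\'eor\`eme d'Artin appliqu\'ee \`a $G$ fournit alors une solution convergente $v \in \mathcal{O}(\mathbb{C}^{n}_{,0})^{pN}$. Le germe $y_{k}$ d\'efini par $y_{k,j}(x) := P_{k,j}(x) + \sum_{i} M_{i}(x) v_{j,i}(x)$ est analytique, v\'erifie $F(x, y_{k}) = G(x, v) = 0$, et comme $y_{k} - P_{k}$ est d'ordre au moins $k+1$, on a bien $J^{k} y_{k} = P_{k} = J^{k} \hat{y}$.

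La difficult\'e essentielle r\'eside dans la preuve de la version sans contrainte de jet, c'est-\`a-dire le th\'eor\`eme d'Artin proprement dit. La strat\'egie est une double r\'ecurrence : sur le nombre $n$ de variables $x$, et sur une mesure de complexit\'e du syst\`eme (hauteur de l'id\'eal, degr\'es apr\`es pr\'eparation). Apr\`es un changement lin\'eaire g\'en\'erique dans les variables $x$, la pr\'eparation de Weierstrass permet de supposer que chaque composante de $F$ est un polyn\^ome distingu\'e en $y_{1}$ ; la division de Weierstrass r\'eduit alors le syst\`eme \`a un syst\`eme impliquant moins d'inconnues, auquel l'hypoth\`ese de r\'ecurrence s'applique. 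Le cas de base $n=1$ se traite directement, toute solution formelle d'un syst\`eme analytique en une variable \'etant convergente par des estimations de type Cauchy. L'obstacle principal est le contr\^ole des coefficients auxiliaires introduits par Weierstrass, dont il faut s'assurer qu'ils h\'eritent bien d'une solution formelle du syst\`eme r\'eduit pour que la machine inductive se mette en marche.
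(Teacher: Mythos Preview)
Le papier ne d\'emontre pas ce th\'eor\`eme : il est simplement rappel\'e comme r\'esultat classique d'Artin (avec r\'ef\'erence \`a \cite{Artin}), sans preuve, pour motiver et encadrer l'\'enonc\'e de Tougeron qui suit. Il n'y a donc pas de ``preuve du papier'' \`a laquelle comparer ta proposition.

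Sur le fond de ta proposition : la premi\`ere \'etape, qui ram\`ene la version avec contrainte de $k$-jet \`a la version sans contrainte en \'ecrivant $\hat{y} = P_k + \sum_i M_i \hat{v}_i$ et en appliquant Artin au syst\`eme auxiliaire $G(x,v)=0$, est correcte et classique ; c'est effectivement la fa\c{c}on standard de d\'eduire l'approximation \`a l'ordre $k$ de la simple existence d'une solution convergente.

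En revanche l'esquisse de preuve du th\'eor\`eme d'Artin lui-m\^eme contient des affirmations probl\'ematiques. D'abord, un changement lin\'eaire dans les variables $x$ ne suffit pas \`a rendre les composantes de $F$ distingu\'ees en $y_1$ : la pr\'eparation de Weierstrass en $y_1$ requiert un contr\^ole sur la d\'ependance en $y$, pas en $x$, et dans la preuve d'Artin ce sont plut\^ot des r\'eductions via le crit\`ere jacobien et une induction sur la hauteur de l'id\'eal qui interviennent. Ensuite, le cas de base $n=1$ n'est pas trivial comme tu le sugg\`eres : une solution formelle d'un syst\`eme analytique en une variable n'est pas automatiquement convergente ``par des estimations de type Cauchy'' (penser \`a des syst\`emes mal pos\'es ou sous-d\'etermin\'es). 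La preuve originale d'Artin est sensiblement plus d\'elicate que ce sch\'ema ne le laisse entendre. Pour l'usage qu'en fait le papier, il suffit toutefois de citer le r\'esultat.
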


Cet \'enonc\'e poss\`ede une version analytique r\'eelle qui a \'et\'e compl\'et\'ee comme suit par J. C. Tougeron \cite{Tougeron3}:

\begin{theorem} Soient $F$ un germe d'application analytique \`a l'origine de $\mathbb{R}^{n} \times \mathbb{R}^{p}$ et $\hat{y} \in \hat{\mathcal{E}}_{n}^{p}$ une solution formelle de l'\'equation implicite $(10)$. Il existe un germe d'application $\mathcal{C}^{\infty}$, $y \in \mathcal{E}_{n}^{p}$ v\'erifiant:
$ F( x , y ) = 0 $ et $T_{\underline{0}} y = \hat{y}$.
\end{theorem}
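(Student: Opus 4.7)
The plan is to combine the real analytic Artin approximation (a real-analytic variant of Theorem 26) with Borel's theorem and the results on closed ideals and modules of relations (Theorems 22 and 25) recalled above.

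First I would perform an \emph{Artin reduction}. For a large integer $k$ (to be chosen), the real analytic version of Theorem 26 provides an analytic germ $y_{k} \in \mathcal{E}_{n}^{p}$ with $F(x, y_{k}(x)) = 0$ and $J^{k} y_{k} = J^{k} \hat{y}$. Writing $\hat{z} = \hat{y} - y_{k} \in (\mathcal{M}_{n}^{k+1})^{p}$ and $G(x, z) := F(x, y_{k}(x) + z)$, the germ $G$ is analytic in $(x,z)$, vanishes identically on $\{z = 0\}$, and admits the flat formal series $\hat{z}$ as a solution. It then suffices to realize $\hat{z}$ as a $\mathcal{C}^{\infty}$ solution of $G(x,z) = 0$, since $y = y_{k} + z$ will then satisfy both $F(x,y) = 0$ and $T_{\underline{0}} y = \hat{y}$.

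Next I would apply the classical Borel theorem to pick $z_{0} \in \mathcal{E}_{n}^{p}$ with $T_{\underline{0}} z_{0} = \hat{z}$. Because the formal expansion of $G(x, z_{0}(x))$ equals $G(x, \hat{z}) = 0$, the germ $G(x, z_{0})$ is $\mathcal{C}^{\infty}$ and flat at $\underline{0}$. The problem is then reduced to finding a flat perturbation $\psi \in \mathcal{M}_{n}^{\infty} \mathcal{E}_{n}^{p}$ such that $G(x, z_{0} + \psi) = 0$.

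For this correction step, expand $G(x, z_{0} + \psi) = G(x, z_{0}) + A(x)\psi + Q(x, \psi)(\psi \otimes \psi)$, where $A(x) = \partial_{z} G(x, 0)$ is an analytic matrix. The equation becomes $A(x) \psi = -G(x, z_{0}) - Q(x, \psi)(\psi \otimes \psi)$, whose right-hand side is flat when $\psi$ is flat. Since the entries of $A$ are analytic, the ideal they generate in $\mathcal{E}_{n}$ is closed by Theorem 22, and Theorem 25 gives a $\mathcal{C}^{\infty}$ lift of any formal syzygy between them. Using these two facts, I would set up a Newton--Picard iteration $\psi_{0} = 0$, $A \psi_{j+1} = -G(x, z_{0}) - Q(x, \psi_{j})(\psi_{j} \otimes \psi_{j})$, solving at each stage a linear equation whose right-hand side is flat and formally lies in the (formal) ideal generated by the columns of $A$; Theorem 25 then provides a $\mathcal{C}^{\infty}$ solution, and Theorem 22 ensures it can be chosen flat.

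The main obstacle is ensuring that this iteration converges in the $\mathcal{C}^{\infty}$ topology while keeping the limit flat, since in general $A$ is not invertible and solutions at each step are not unique. I would handle this with a Borel-type glueing: each correction $\psi_{j+1} - \psi_{j}$ is multiplied by a cutoff supported on a shrinking neighborhood of $\underline{0}$, chosen so that $\psi_{j+1} - \psi_{j}$ has vanishing jet of order tending to infinity. This guarantees both the $\mathcal{C}^{\infty}$-convergence of $\sum (\psi_{j+1} - \psi_{j})$ to a flat limit $\psi$ and the validity of the equation $G(x, z_{0} + \psi) = 0$ after passing to the limit. Setting $y = y_{k} + z_{0} + \psi$ then yields the desired $\mathcal{C}^{\infty}$ solution with $T_{\underline{0}} y = \hat{y}$.
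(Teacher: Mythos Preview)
The paper does not give a proof of this statement: it is quoted, with a reference, as a theorem of J.-C.~Tougeron \cite{Tougeron3} and then used as a black box (for instance in the proof of the next theorem). So there is no ``paper's proof'' to compare against; the question is only whether your sketch stands on its own.

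It does not, and the gap is in the correction step. At each stage you want to solve $A(x)\psi = b(x)$ with $b$ flat at $\underline 0$, and you appeal to Theorems~22 and~25. But Theorem~25 (Malgrange) realizes formal \emph{syzygies} $\sum \hat h_i\,T_{\underline 0}\varphi_i=0$ by $\mathcal C^\infty$ ones; it does not solve inhomogeneous linear systems. What you actually need is that $b$ belong to the $\mathcal E_n$-submodule generated by the (analytic) columns of $A$. That submodule is closed by the analogue of Theorem~22, and Whitney's criterion (Theorem~21) then says $b$ lies in it iff $T_{\underline a}b\in T_{\underline a}(\mathrm{Im}\,A)$ for \emph{every} nearby point~$\underline a$. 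Flatness at the origin gives the condition at $\underline a=\underline 0$ for free, but says nothing at $\underline a\neq\underline 0$: if $\partial_y F$ drops rank along the analytic solution $y_k$ --- which nothing in your reduction excludes --- the very first equation $A\psi_1=-G(x,z_0)$ may have no $\mathcal C^\infty$ solution at all. The phrase ``formally lies in the ideal generated by the columns of $A$'' is true only because the formal right-hand side is $0$; it carries no information.

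There is a second problem in your convergence argument. Multiplying the increments $\psi_{j+1}-\psi_j$ by cutoffs breaks the recursion $A\psi_{j+1}=-G(x,z_0)-Q(x,\psi_j)(\psi_j\otimes\psi_j)$, so the limit $\psi$ has no reason to satisfy $G(x,z_0+\psi)=0$ on a full neighbourhood of $\underline 0$; it will at best satisfy it to infinite order at $\underline 0$, which is exactly what you started with. Tougeron's actual proof in \cite{Tougeron3} does not proceed via a naked Newton scheme of this kind and requires finer control of the analytic locus $\{F=0\}$; the closed-ideal and syzygy theorems recalled here are not, by themselves, enough.
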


On consid\`ere maintenant la donn\'ee d'une \'equation implicite "compl\`etement formelle"
\begin{equation}
\hat{F}( x , y ) = 0
\end{equation}
avec $\hat{F} \in \hat{\mathcal{E}}_{n + p}^{l}$. On suppose qu'il existe une solution formelle $\hat{y} \in \hat{\mathcal{E}}_{n}^{p}$ de ${(11)}$ : $\hat{F}(x, \hat{y}) = 0$. On se demande s'il existe $F \in \mathcal{E}_{n + p}^{l}$ et $y \in \mathcal{E}_{n}^{p}$ tels que $F(x , y(x)) = 0 $, $T_{\underline{0}} F =
\hat{F}$ et $T_{\underline{0}} y = \hat{y}$. Si $\hat{F}$ d\'epend effectivement de $x$, i.e. $\frac{\partial \hat{F}}{x_{i}} \neq 0$  pour un certain indice  $i$, la solution est simple. On se donne une r\'ealisation $\tilde{F}$ de $\hat{F}$ et une r\'ealisation $x \mapsto y ( x )$  de $\hat{y}$. L'application $x \mapsto \tilde{F} ( x , y ( x )) = a ( x )$  est plate \`a l'origine, au sens o\`u chaque composante $a_{i}$ de $a$ est plate. On pose alors $F ( x , y ) = \tilde{F} (x , y ) - a ( x)$; visiblement $T_{\underline{0}} F = \hat{F}$ et $F (x , y (x ) ) = 0 $. Lorsque $\hat{F} = \hat{F} ( y )$ ne d\'epend pas de $x$ et que l'on dispose d'une solution formelle $\hat{y} ( x )$: $\hat{F} ( \hat{y} ( x ) ) = 0$, le probl\`eme semble d\'elicat. En petite dimension ( $n = p = l = 1$) on obtient le r\'esultat suivant:

\begin{theorem} Soient $\hat{F} \in \hat{\mathcal{E}}_{2}$ et $( \hat{y}_{1} , \hat{y}_{2}) \in \hat{\mathcal{E}}_{1}^{2}$ une solution param\'etrique de $\hat{F} = 0$, i.e. $\hat{F}( \hat{y}_{1} , \hat{y}_{2}) = 0$. Il existe $F \in \mathcal{E}_{2}$ et $(y_{1}, y_{2}) \in \mathcal{E}_{1}^{2}$ satisfaisant $F(y_{1}, y_{2}) = 0$, $T_{\underline{0}} F = \hat{F}$ et $T_{\underline{0}}( y_{1} , y_{2} ) = ( \hat{y}_{1} , \hat{y}_{2})$.
\end{theorem}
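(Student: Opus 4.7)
Mon plan est de ramener l'énoncé à une équation implicite analytique, afin de pouvoir appliquer le Théorème 26 de Tougeron. L'ingrédient clé sera le résultat de normalisation polynomiale déjà invoqué dans l'Exemple 24 (dû à Cerveau et Mattei): toute série formelle non nulle en deux variables devient, à composition près par un difféomorphisme formel, un polynôme. En couplant ces deux résultats, la preuve se ramène essentiellement à un transport par un difféomorphisme formel, que l'on réalise en $\mathcal{C}^{\infty}$ par Borel.

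Premièrement, si $\hat{F} \equiv 0$, le problème est trivial: on prend $F \equiv 0$ et n'importe quelles réalisations de Borel des $\hat{y}_{i}$. On suppose donc $\hat{F}$ non identiquement nulle. Quitte à translater le but par $x \mapsto x - \hat{y}(\underline{0})$, on se ramène au cas où $\hat{y}(\underline{0}) = \underline{0}$. On applique alors le résultat de Cerveau--Mattei à $\hat{F}$: il existe $\hat{\Phi} \in \widehat{\mathrm{Diff}}(\mathbb{R}^{2}_{0})$ et un polynôme $P \in \mathbb{R}[x_{1}, x_{2}]$ tels que $\hat{F} \circ \hat{\Phi} = P$. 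On pose $\hat{y}' := \hat{\Phi}^{-1} \circ \hat{y} \in \hat{\mathcal{E}}_{1}^{2}$, composition bien définie puisque $\hat{y}(\underline{0}) = \underline{0}$; un calcul immédiat donne $P(\hat{y}'_{1}, \hat{y}'_{2}) = \hat{F}(\hat{\Phi} \circ \hat{y}') = \hat{F}(\hat{y}) = 0$, de sorte que $\hat{y}'$ est une solution formelle de l'équation implicite analytique $P(y'_{1}, y'_{2}) = 0$.

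Le Théorème 26 (Tougeron), appliqué au polynôme $P$ (qui est analytique) et à la solution formelle $\hat{y}'$, produit alors $y' \in \mathcal{E}_{1}^{2}$ vérifiant $P(y'(t)) = 0$ et $T_{\underline{0}} y' = \hat{y}'$. Je choisis ensuite une réalisation $\Phi \in \mathrm{Diff}(\mathbb{R}^{2}_{0})$ de $\hat{\Phi}$ (c'est un difféomorphisme local car la partie linéaire de $\hat{\Phi}$ est inversible), et je pose $y := \Phi \circ y'$ ainsi que $F := P \circ \Phi^{-1}$. Une vérification directe donne alors $T_{\underline{0}} F = P \circ \hat{\Phi}^{-1} = \hat{F}$, $T_{\underline{0}} y = \hat{\Phi} \circ \hat{y}' = \hat{y}$, et enfin $F(y(t)) = P(\Phi^{-1}(\Phi(y'(t)))) = P(y'(t)) = 0$.

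Toute la difficulté est ainsi encapsulée dans les deux résultats externes invoqués: la normalisation polynomiale de Cerveau--Mattei, propre à la dimension $2$, et le théorème de réalisation $\mathcal{C}^{\infty}$ de Tougeron pour les équations implicites analytiques. Le reste n'est qu'un transport compatible avec le foncteur $T_{\underline{0}}$, qui commute à la composition, à la somme et à la translation. L'étape la plus délicate me semble être la vérification que tout reste cohérent après passage du formel au $\mathcal{C}^{\infty}$ via Borel, mais cela ne présente pas d'obstruction véritable, l'invariance de la partie linéaire sous Borel garantissant en particulier que $\Phi$ reste un difféomorphisme local.
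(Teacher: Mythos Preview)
Your proof is correct and follows essentially the same route as the paper: reduce $\hat{F}$ to a polynomial $P$ via the Cerveau--Mattei normalization, apply Tougeron's $\mathcal{C}^{\infty}$ realization theorem to the analytic equation $P=0$, and transport back through a Borel realization $\Phi$ of the conjugating diffeomorphism. The only cosmetic differences are that you spell out the trivial case $\hat{F}\equiv 0$ and the translation ensuring $\hat{y}(\underline{0})=\underline{0}$ (the latter being implicit anyway for the formal composition $\hat{F}\circ\hat{y}$ to make sense); note also that the Tougeron theorem you invoke is numbered~29 in the paper, not~26.
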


\begin{proof}
On suppose que $\hat{F}$ et $(\hat{y}_{1}, \hat{y}_{2})$ sont non constants. Le Th\'eor\`eme 4.4 de D. Cerveau et J.F. Mattei \cite{Cerveau2} s'adapte facilement: il existe un diff\'eomorphisme formel $\hat{\Phi} \in \widehat{\mathrm{Diff}}(\mathbb{R}^{2}_{0})$ tel que $\hat{F} \circ \hat{\Phi} = P$ soit un polyn\^ome. Si $(\hat{y}_{1} , \hat{y}_{2})$ est une solution de $\hat{F} = 0$, alors $(\hat{Y}_{1}, \hat{Y}_{2}) = \hat{\Phi}^{- 1} (\hat{y}_{1} , \hat{y}_{2}) \in \hat{\mathcal{E}}_{1}^{2}$ est solution de $P (\hat{Y}_{1} , \hat{Y}_{2}) = 0$. Le th\'eor\`eme de Tougeron assure qu'il existe une solution $( Y_{1} , Y_{2} )  \in \mathcal{E}_{1}^{2}$  \`a $P ( Y_{1} , Y_{2} ) = 0$ avec $T_{\underline{0}} ( Y_{1} , Y_{2} ) = ( \hat{Y}_{1} , \hat{Y}_{2} )$. Soit $\Phi$ une r\'ealisation $\mathcal{C}^{\infty}$ de $\hat{\Phi}$. On v\'erifie que $F = P \circ \Phi^{- 1}$ et $( y_{1} , y_{2} ) = \Phi ( Y_{1} , Y_{2} )$ satisfont l'\'enonc\'e du th\'eor\`eme. 
\end{proof}

\begin{conjecture}
Le probl\`eme pr\'ec\'edent a une r\'eponse positive en toute g\'en\'eralit\'e.
\end{conjecture}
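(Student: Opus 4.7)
On peut supposer $\hat{y}(\underline{0}) = \underline{0}$ (quitte à translater). Si $\hat{F}$ dépend effectivement de $x$, la construction par soustraction d'une fonction plate rappelée avant le Théorème 30, $F(x,y) = \tilde{F}(x,y) - a(x)$ avec $a(x) = \tilde{F}(x, y(x))$, fournit déjà une réponse positive ; observons qu'en fait cette même astuce marche littéralement sans hypothèse sur la dépendance en $x$, mais elle produit alors un $F$ artificiellement dépendant de $x$. Le cas véritablement délicat, dans l'esprit du Théorème 30, est celui où $\hat{F} = \hat{F}(y) \in \hat{\mathcal{E}}^{l}_{p}$ ne dépend pas de $x$ et où l'on exige que la réalisation $F$ dépende elle aussi seulement de $y$, c'est-à-dire $F \in \mathcal{E}^{l}_{p}$.

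Le plan est de généraliser pas à pas la preuve du Théorème 30. Premièrement, on cherche un difféomorphisme formel $\hat{\Phi} \in \widehat{\mathrm{Diff}}(\mathbb{R}^{p}_{0})$ tel que $P := \hat{F} \circ \hat{\Phi}$ soit un système de germes analytiques réels, idéalement polynomiaux, sur $\mathbb{R}^{p}$. Deuxièmement, on pose $\hat{Y} = \hat{\Phi}^{-1} \circ \hat{y}$ : c'est une solution formelle paramétrique de l'équation implicite analytique $P(Y) = 0$. Le Théorème 29 de Tougeron fournit alors une solution $\mathcal{C}^{\infty}$, $Y \in \mathcal{E}^{p}_{n}$, de $P(Y(x)) = 0$ avec $T_{\underline{0}} Y = \hat{Y}$. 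Enfin, si $\Phi \in \mathrm{Diff}(\mathbb{R}^{p}_{0})$ est une réalisation $\mathcal{C}^{\infty}$ de $\hat{\Phi}$, les fonctions $F = P \circ \Phi^{-1}$ et $y = \Phi \circ Y$ satisfont $F(y(x)) = P(Y(x)) = 0$, $T_{\underline{0}} F = P \circ \hat{\Phi}^{-1} = \hat{F}$ et $T_{\underline{0}} y = \hat{y}$, ce qui conclurait.

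L'étape difficile est la première : construire la normalisation $\hat{\Phi}$. En dimension $p=2$ pour $l = 1$, le résultat de D. Cerveau et J.-F. Mattei \cite{Cerveau2}, utilisé dans la preuve du Théorème 30, garantit que toute série formelle non constante de $\hat{\mathcal{E}}_{2}$ est formellement conjuguée à un polynôme ; aucun analogue global n'est disponible dès que $p \geq 3$, ni pour des systèmes $l \geq 2$. Plusieurs pistes pourraient être envisagées pour contourner cet obstacle : affaiblir l'objectif de "polynomial" à "analytique" en exploitant que l'existence de la solution formelle $\hat{y}$ impose une contrainte géométrique sur le lieu $\{\hat{F} = 0\}$ ; invoquer la désingularisation de l'idéal $\langle \hat{F}_{1}, \ldots, \hat{F}_{l} \rangle$ pour se ramener à un modèle à croisements normaux sur lequel on dispose de formes explicites ; ou procéder itérativement en approximant $\hat{F}$ modulo $\hat{\mathcal{M}}_{p}^{k}$ par un système analytique $P^{(k)}$, en fabriquant une solution $\mathcal{C}^{\infty}$ $y^{(k)}$ via le Théorème 29 et en contrôlant le passage à la limite grâce aux Théorèmes 22 et 27 de Malgrange sur les idéaux fermés et le module des relations. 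L'absence d'un substitut multidimensionnel au résultat de Cerveau-Mattei semble être le cœur de la difficulté, et la raison probable pour laquelle l'énoncé reste conjectural.
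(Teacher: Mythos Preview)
The statement you are addressing is a \emph{conjecture} in the paper: the authors do not provide a proof, and indeed present it as an open problem immediately after the special case treated in Th\'eor\`eme~30. So there is no ``paper's own proof'' to compare against.

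Your text is not a proof either, and you say so yourself in the last paragraph. What you have written is a lucid discussion of strategy: you correctly recall that the flat-correction trick $F(x,y)=\tilde F(x,y)-a(x)$ settles the case where $\hat F$ is allowed to depend on $x$ (and you rightly observe that this trick formally works even when $\hat F=\hat F(y)$, at the price of producing a realization $F$ that depends on $x$); you correctly isolate the genuine question as the case $\hat F\in\hat{\mathcal E}_p^{\,l}$ with the requirement $F\in\mathcal E_p^{\,l}$; and you correctly identify the obstruction to extending the proof of Th\'eor\`eme~30, namely the absence, for $p\ge 3$ or $l\ge 2$, of an analogue of the Cerveau--Mattei result that every nonconstant $\hat F\in\hat{\mathcal E}_2$ is formally conjugate to a polynomial. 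The alternative routes you sketch (weakening ``polynomial'' to ``analytic'', desingularization to normal crossings, approximation modulo $\hat{\mathcal M}_p^k$) are reasonable heuristics but none is carried through, so the core gap remains exactly where you locate it.

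In short: your assessment of the difficulty matches the paper's, but neither you nor the paper proves the statement; it stands as a conjecture.
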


Un r\'esultat classique de Malgrange \cite{Malgrange} assure qu'un champ de vecteurs analytique $X \in \mathcal{X}_{2}$, poss\'edant une int\'egrale premi\`ere formelle $\hat{X}$ non constante, en poss\`ede une analytique. Ce r\'esultat ne persiste pas en classe $\mathcal{C}^{\infty}$. En effet soit $X = x_{2} \frac{\partial}{\partial x_{1}} - x_{1}\frac{\partial}{\partial x_{2}} - \exp{\frac{- 1}{x_{1}^{2} + x_{2}^{2}}} (x_{1} \frac{\partial}{\partial x_{1}} + x_{2}\frac{\partial}{\partial x_{2}})$. Il poss\`ede l'int\'egrale premi\`ere formelle $x_{1}^{2} + x_{2}^{2}$ (i.e. $T_{\underline{0}} X ( x_{1}^{2} + x_{2}^{2} ) = 0$) mais il ne poss\`ede aucune int\'egrale premi\`ere $\mathcal{C}^{\infty}$ non constante. La raison est que toutes les trajectoires de $X$ adh\`erent au point singulier $\underline{0}$ (spirales). Par contre on a l'\'enonc\'e suivant, toujours propre \`a la dimension deux:

\begin{theorem} Soient $\hat{X}$ et $\hat{F}$ deux \'el\'ements de $\hat{\mathcal{X}}_{2}$ et $\hat{\mathcal{E}}_{2}$ respectivement. On suppose que $\hat{F}$ est une int\'egrale premi\`ere non constante de $\hat{X}$ ( $\hat{X} .\hat{F} = 0$). Il existe $X \in \mathcal{X}_{2}$ et $F \in \mathcal{E}_{2}$ tels que $X . F = 0$, $T_{\underline{0}} X = \hat{X}$ et $T_{\underline{0}} F = \hat{F}$.
\end{theorem}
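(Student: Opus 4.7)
Le plan est de copier la strat\'egie de la Proposition 9, la cl\'e \'etant ici encore le th\'eor\`eme de Mattei-Moussu. Le cas $\hat{X} = 0$ est trivial: on prend alors $X = 0$ et pour $F$ une r\'ealisation de Borel quelconque de $\hat{F}$. On suppose d\'esormais $\hat{X}$ non nul. D'apr\`es Mattei-Moussu, l'anneau $\hat{\mathcal{A}}(\hat{X})$ des int\'egrales premi\`eres formelles de $\hat{X}$ est engendr\'e par une int\'egrale premi\`ere minimale $\hat{f}_{0} \in \hat{\mathcal{E}}_{2}$, et l'int\'egrale premi\`ere donn\'ee s'\'ecrit $\hat{F} = \hat{l}(\hat{f}_{0})$ avec $\hat{l} \in \hat{\mathcal{E}}_{1}$.

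Je reprendrais ensuite telle quelle la construction de la preuve de la Proposition 9. On d\'ecompose $\hat{f}_{0}$ en facteurs complexes irr\'eductibles
$$\hat{f}_{0} = \hat{f}_{1}^{n_{1}} \ldots \hat{f}_{k}^{n_{k}} (\hat{f}_{k+1}\overline{\hat{f}}_{k+1})^{n_{k+1}} \ldots (\hat{f}_{p}\overline{\hat{f}}_{p})^{n_{p}},$$
on en d\'eduit la 1-forme r\'eelle $\hat{\omega}_{0}$ \`a singularit\'e alg\'ebriquement isol\'ee construite comme dans la Proposition 9, puis le champ dual $\hat{X}_{0}$ d\'efini par $\hat{\omega}_{0} = i_{\hat{X}_{0}} dx_{1} \wedge dx_{2}$, qui admet lui aussi des z\'eros isol\'es et poss\`ede $\hat{f}_{0}$ pour int\'egrale premi\`ere. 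Comme $d\hat{f}_{0}$ est multiple de $\hat{\omega}_{0}$ par un \'el\'ement non nul de $\hat{\mathcal{E}}_{2}$, la condition $\hat{X}\cdot\hat{f}_{0} = 0$ entra\^\i ne $i_{\hat{X}}\hat{\omega}_{0} = 0$, et donc l'existence de $\hat{h} \in \hat{\mathcal{E}}_{2}$ tel que $\hat{X} = \hat{h}\hat{X}_{0}$.

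Il ne reste plus qu'\`a Borel-r\'ealiser les briques. Je choisirais des r\'ealisations $\mathcal{C}^{\infty}$ $f_{1}, \ldots, f_{k}$ et $g_{j} = f_{j}\overline{f}_{j}$ ($j \geq k+1$) des $\hat{f}_{i}$ et des $\hat{f}_{j}\overline{\hat{f}}_{j}$ respectivement; elles induisent une r\'ealisation $X_{0}$ de $\hat{X}_{0}$ admettant $f_{0} = f_{1}^{n_{1}} \ldots g_{p}^{n_{p}}$ comme int\'egrale premi\`ere (puisque $i_{X_{0}}\omega_{0} = 0$). En prenant une r\'ealisation $h$ de $\hat{h}$ et une r\'ealisation $l$ de $\hat{l}$, on pose $X = h X_{0}$ et $F = l(f_{0})$: on v\'erifie imm\'ediatement $T_{\underline{0}} X = \hat{X}$, $T_{\underline{0}} F = \hat{F}$ et $X\cdot F = h \, l'(f_{0}) \, X_{0}(f_{0}) = 0$.

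La d\'ependance essentielle de cette approche est le th\'eor\`eme de Mattei-Moussu: la monog\'en\'eit\'e de $\hat{\mathcal{A}}(\hat{X})$, sp\'ecifique \`a la dimension deux, est ce qui permet de substituer \`a $\hat{X}$ un mod\`ele canonique $\hat{X}_{0}$ Borel-r\'ealisable facteur par facteur. Il est ici essentiel de construire $X$ simultan\'ement avec $F$, puisque, comme le montre l'exemple spiral\'e pr\'ec\'edant l'\'enonc\'e, une r\'ealisation $\mathcal{C}^{\infty}$ arbitrairement choisie de $\hat{X}$ n'admet en g\'en\'eral aucune int\'egrale premi\`ere $\mathcal{C}^{\infty}$ non constante.
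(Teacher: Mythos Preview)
Votre preuve est correcte, mais elle emprunte une voie r\'eellement diff\'erente de celle du papier. Le papier ne passe pas par Mattei--Moussu ni par la factorisation de l'int\'egrale premi\`ere minimale~: il applique directement le th\'eor\`eme de Cerveau--Mattei pour trouver un $\hat{\Phi} \in \widehat{\mathrm{Diff}}(\mathbb{R}^{2}_{0})$ tel que $\hat{F}\circ\hat{\Phi}=P$ soit un polyn\^ome, puis observe que la condition $\hat{Y}\cdot P = 0$ (o\`u $\hat{Y}=\hat{\Phi}^{-1}_{\ast}\hat{X}$) est une relation lin\'eaire formelle entre les polyn\^omes $\partial P/\partial x_{j}$. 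L'id\'eal engendr\'e par ces polyn\^omes \'etant ferm\'e, le th\'eor\`eme de Malgrange sur le module des relations (Th\'eor\`eme~27) fournit un champ $\mathcal{C}^{\infty}$ $Y$ avec $Y\cdot P = 0$ et $T_{\underline{0}}Y=\hat{Y}$~; on conclut en transportant par une r\'ealisation $\Phi$ de $\hat{\Phi}$.

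Ce que chaque approche apporte~: celle du papier illustre pr\'ecis\'ement la machinerie des id\'eaux ferm\'es introduite en Section~2 et traite $\hat{F}$ telle quelle, sans passer par une int\'egrale premi\`ere minimale. La v\^otre reste enti\`erement dans l'esprit de la Section~1, r\'eutilise la construction explicite de $\hat{X}_{0}$ et \'evite l'argument d'id\'eaux ferm\'es~; elle est peut-\^etre plus constructive mais repose sur la structure fine (Mattei--Moussu, factorisation) de l'anneau des int\'egrales premi\`eres, alors que le papier n'a besoin que de la r\'eduction polynomiale de $\hat{F}$.
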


\begin{proof}
Soit $\hat{\Phi} \in \widehat{\mathrm{Diff}}_{2}$ un diff\'eomorphisme formel tel que $\hat{F} \circ \hat{\Phi} = P$ soit un polyn\^ome. Le champ $\hat{Y} = \hat{\Phi}^{- 1}_{\ast}\hat{X} = \sum \hat{Y}_{j} \frac{\partial }{\partial x_{j}}$ a $P$ pour int\'egrale premi\`ere:
$$\sum \hat{Y}_{j} \frac{\partial P}{\partial x_{j}} = 0$$
cette \'egalit\'e peut \^etre interpr\'et\'ee comme une relation lin\'eaire entre les polyn\^omes $\frac{\partial P}{\partial x_{j}}$. Le Th\'eor\`eme 29 produit un champ de vecteurs $Y = \sum Y_{j} \frac{\partial }{\partial x_{j}}$ de classe $\mathcal{C}^{\infty}$ tel que $Y. P = 0$. Soit $\Phi$ une r\'ealisation de Borel de $\hat{\Phi}$; le champ $X = \Phi_{\ast} Y$ et la fonction $F = P \circ \Phi^{- 1}$ conviennent. 
\end{proof}

Dans le m\^eme ordre d'id\'ee le probl\`eme des s\'eparatrices a \'et\'e consid\'er\'e par plusieurs auteurs notamment  Dumortier \cite{Dumortier}, Roussarie \cite{Roussarie} \cite{Kelley}, $\ldots$ etc. Soit $X$ un champ de vecteur $\mathcal{C}^{\infty}$ \`a l'origine $\underline{0}$ de $\mathbb{R}^{2}$ \`a singularit\'e alg\'ebriquement isol\'ee, $T_{\underline{0}} X = \hat{X}$ son jet de Taylor en $\underline{0}$ et $\omega = i_{X} d x_{1} \wedge d x_{2} \in \Omega^{1}_{2}$. Une s\'eparatrice formelle  de $X$ (ou de $\omega$) est une courbe param\'etr\'ee formelle $\hat{\gamma} = ( \hat{\gamma}_{1} , \hat{\gamma}_{2} ) \in \hat{\mathcal{E}}^{2}_{1}$, non constante telle que $\hat{\gamma} ( 0) = \underline{0}$ et $\hat{\gamma}^{\ast} \hat{\omega} = 0$.
Le th\'eor\`eme de r\'eduction des singularit\'es de Seidenberg \cite{Mattei} ajout\'e \`a  l'\'etude locale des singularit\'es apparaissant en fin de processus de r\'eduction permet d'\'etablir avec les notations pr\'ec\'edentes le:

\begin{theorem} Soient $X$ un champ de vecteur $\mathcal{C}^{\infty}$, \`a singularit\'e isol\'ee \`a l'origine de $\mathbb{R}^{2}$ et $\hat{\gamma}$ une s\'eparatrice formelle non triviale de $X$. Il existe une s\'eparatrice $\mathcal{C}^{\infty}$, $\gamma \in \mathcal{E}_{1}^{2}$ de $X$ r\'ealisant $\hat{\gamma}$: $T_{\underline{0}} \gamma = \hat{\gamma}$ et $\gamma^{\ast} \omega = 0$.
\end{theorem}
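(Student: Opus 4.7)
La strat\'egie, sugg\'er\'ee par l'\'enonc\'e, consiste \`a combiner la r\'eduction des singularit\'es de Seidenberg avec l'analyse locale aux singularit\'es r\'eduites obtenues. J'appliquerais d'abord une suite finie d'\'eclatements $\pi : \tilde{M} \to (\mathbb{R}^{2} , \underline{0})$, dict\'ee par la r\'eduction du feuilletage formel $\hat{\mathcal{F}}$ associ\'e \`a $\hat{X} = T_{\underline{0}} X$, de sorte que le transform\'e strict $\tilde{\mathcal{F}}$ du feuilletage d\'efini par $X$ n'ait plus, au-dessus de $\underline{0}$, que des singularit\'es r\'eduites. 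Les \'eclatements \'etant des op\'erations g\'eom\'etriques, ils pr\'eservent la classe $\mathcal{C}^{\infty}$. Quitte \`a travailler avec chacune de ses branches s\'epar\'ement, on peut supposer $\hat{\gamma}$ irr\'eductible; son transform\'e strict se rel\`eve alors en un germe de s\'eparatrice formelle $\hat{\tilde{\gamma}}$ en un unique point $p$ du diviseur exceptionnel $E = \pi^{-1}(\underline{0})$. Puisque $\hat{\tilde{\gamma}}$ est non triviale, $p$ est une singularit\'e (r\'eduite) de $\tilde{\mathcal{F}}$: sinon, la seule orbite locale du champ $\mathcal{C}^{\infty}$ r\'egulier en $p$ en donnerait automatiquement la r\'ealisation.

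L'\'etape cl\'e est l'\'etude locale en $p$. \`A normalisation lin\'eaire pr\`es, les singularit\'es r\'eduites en dimension $2$ sont de trois types: hyperbolique (valeurs propres r\'eelles de rapport n\'egatif ou irrationnel, ou noeud diagonalisable), noeud-col (une valeur propre nulle, l'autre non nulle), ou complexe conjugu\'e (foyer ou centre). Un foyer ou un centre ne poss\`ede pas de s\'eparatrice formelle r\'eelle non triviale, ce qui exclut ce cas par hypoth\`ese. Dans les cas hyperbolique et noeud, le th\'eor\`eme d'Hadamard--Perron fournit, tangentes aux directions propres, des vari\'et\'es invariantes $\mathcal{C}^{\infty}$ uniques: l'une d'elles r\'ealise n\'ecessairement $\hat{\tilde{\gamma}}$, car la s\'eparatrice formelle dans chaque direction propre est \'egalement unique. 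Dans le cas noeud-col, la direction forte donne par le m\^eme argument une vari\'et\'e $\mathcal{C}^{\infty}$ unique; pour la direction faible, le th\'eor\`eme de la vari\'et\'e centrale fournit une vari\'et\'e $\mathcal{C}^{\infty}$ invariante $\tilde{\gamma}$.

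Le point d\'elicat est l'identification des jets infinis $T_{\underline{0}} \tilde{\gamma} = \hat{\tilde{\gamma}}$. Dans les cas hyperboliques (y compris la direction forte d'un noeud-col), elle r\'esulte de l'unicit\'e de la vari\'et\'e invariante. Dans le cas de la vari\'et\'e centrale au noeud-col, celle-ci n'est pas unique comme sous-vari\'et\'e $\mathcal{C}^{\infty}$, mais toutes les vari\'et\'es centrales partagent le m\^eme d\'eveloppement de Taylor infini \`a l'origine, celui-ci \'etant d\'etermin\'e par le feuilletage comme l'unique s\'eparatrice formelle faible. L'invariance formelle de $\hat{\tilde{\gamma}}$ se r\'esout en effet de fa\c{c}on unique coefficient par coefficient. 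C'est essentiellement cette v\'erification, dans le contexte $\mathcal{C}^{\infty}$ r\'eel et sans hypoth\`ese d'analyticit\'e, qui constitue l'obstacle principal.

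Une fois $\tilde{\gamma}$ construite, la courbe $\gamma := \pi \circ \tilde{\gamma}$ est un germe $\mathcal{C}^{\infty}$ de $(\mathbb{R}, 0)$ dans $(\mathbb{R}^{2}, \underline{0})$, avec $T_{\underline{0}} \gamma = \hat{\gamma}$ par compatibilit\'e des jets avec la composition, et $\gamma^{\ast}\omega = \tilde{\gamma}^{\ast}(\pi^{\ast}\omega) = 0$ puisque $\tilde{\gamma}$ est int\'egrale de $\tilde{\mathcal{F}}$. Ceci ach\`everait la preuve.
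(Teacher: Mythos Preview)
Your proposal is correct and follows precisely the strategy the paper indicates (without detailing): the paper states only that the theorem results from ``le th\'eor\`eme de r\'eduction des singularit\'es de Seidenberg ajout\'e \`a l'\'etude locale des singularit\'es apparaissant en fin de processus de r\'eduction'', and your sketch fleshes out exactly this outline, invoking invariant-manifold theorems (Hadamard--Perron, center manifold) at the reduced singularities and projecting back via $\pi$. One minor remark: at a regular point $p$ of $\tilde{\mathcal{F}}$ on the exceptional divisor, the unique local leaf \emph{is} the divisor (which is invariant), so this case is actually excluded rather than trivially handled; but this does not affect the argument.
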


Faisons quelques commentaires sur l'\'enonc\'e ci-dessus. Pour une telle courbe $\gamma$, il existe $F \in \mathcal{E}_{2}$, irr\'eductible \`a singularit\'e alg\'ebriquement isol\'ee telle que $F^{- 1} ( 0 )$ soit \'egale \`a l'image de $\gamma$. En effet il existe $\hat{F}  \in \hat{\mathcal{E}}_{2}$, \`a singularit\'e alg\'ebriquement isol\'ee telle que $\hat{F} \circ \hat{\gamma} = 0$. La param\'etrisation est \`a reparam\'etrisation pr\`es par un \'el\'ement $\hat{\tau} \in \hat{\mathcal{E}}_{1}$, $\hat{\gamma} = \hat{\delta} \circ \hat{\tau}$ l'unique courbe formelle v\'erifiant cela. D'un autre c\^ot\'e $\hat{\gamma}$ \'etant fix\'e l'id\'eal $\langle \hat{F} \rangle$ engendr\'e par $\hat{F}$ est intrins\`eque. Soit $F^{'}$ une r\'ealisation $\mathcal{C}^{\infty}$ de $\hat{F}$. Le morphisme $\pi$ de r\'esolution des singularit\'es de $\hat{\gamma}$ (qui est \'egalement celui de $\hat{F}$) fait que $( F^{'} \circ \pi = 0 )$ est une courbe \`a croisements normaux. Soient $\gamma^{'}$ d\'efini par $\pi \circ \gamma^{'} = \gamma$. On peut modifier localement $F^{'} \circ \pi$ par composition \`a droite par un germe de diff\'eomorphisme $\varphi$, plat \`a l'identit\'e le long du diviseur $\pi^{- 1} ( \underline{0} )$ de sorte que $F^{'} \circ \pi \circ \varphi$ s'annule pr\'ecisement sur $\gamma^{'}$ ($F^{'} \circ \pi \circ \varphi \circ \gamma^{'} \equiv 0$). La fonction $F$ d\'efinie par $F \circ \pi = F^{'} \circ \pi \circ \varphi$ est $\mathcal{C}^{\infty}$, en dehors de l'origine, et $( F - F^{'} ) \circ \pi$ est plat le long de $\pi^{- 1} ( \underline{0} )$. Il en r\'esulte que $F$ s'\'etend de mani\`ere $\mathcal{C}^{\infty}$ en $\underline{0} \in \mathbb{R}^{2}$. \\

\begin{conjecture} 
Soient $X$ un germe de champ de vecteurs $\mathcal{C}^{\infty}$ \`a l'origine de $\mathbb{R}^{n}$, \`a singularit\'e alg\'ebriquement isol\'ee et poss\'edant une courbe formelle invariante $\hat{\gamma} \in \hat{\mathcal{E}}_{1}^{n}$. Alors il existe $\gamma \in \mathcal{E}_{1}^{n}$ tel que $T_{\underline{0}} \gamma = \hat{\gamma}$ et $\gamma$ est une courbe invariante de $X$.
\end{conjecture}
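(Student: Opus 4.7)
The conjecture is the $n$-dimensional analogue of Theorem 31, whose proof in dimension two rests on Seidenberg's reduction of singularities together with a local analysis at the final simple singularities. The plan is to attempt the same two-step approach in dimension $n$. First, apply an embedded resolution $\pi : M \to (\mathbb{R}^{n}, \underline{0})$ of the formal curve $\hat{\gamma}$: by a finite sequence of blow-ups along smooth centres, the strict transform $\hat{\gamma}^{\prime}$ can be made smooth and meet the exceptional divisor $E = \pi^{-1}(\underline{0})$ transversally at a single formal point $p$. Pull back $X$ to $\tilde{X} := \pi^{*} X$ on $M$; after removing an appropriate divisorial factor one obtains a $\mathcal{C}^{\infty}$ vector field on $(M,p)$ for which $\hat{\gamma}^{\prime}$ is still a formal invariant curve.

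Second, work in local coordinates $(t, y_{1}, \ldots, y_{n-1})$ near $p$ in which $\hat{\gamma}^{\prime}(t) = (t, 0, \ldots, 0)$ and $E = \{ t = 0 \}$. Formal invariance means the $y_{i}$-components of $\tilde{X}$ vanish as formal series along the $t$-axis while $\tilde{X}_{t}(t, 0)$ is non-zero. The obvious candidate $\gamma^{\prime}(t) = (t, 0, \ldots, 0)$ is tangent to the $C^{\infty}$ field $\tilde{X}$ up to flat error, and setting $\gamma := \pi \circ \gamma^{\prime}$ furnishes a smooth curve off $\underline{0}$ realizing $\hat{\gamma}$. The extension of $\gamma$ as a $\mathcal{C}^{\infty}$ germ at $\underline{0}$ with $T_{\underline{0}} \gamma = \hat{\gamma}$ follows the flat-correction argument given in the commentary after Theorem 31: composing on the right by a diffeomorphism flat to the identity along $\pi^{-1}(\underline{0})$ absorbs the discrepancy between the pushed-down curve and an arbitrary Borel realization of $\hat{\gamma}$.

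The principal obstacle lies between these two steps. Formal invariance of $\hat{\gamma}^{\prime}$ only forces the $y$-components of $\tilde{X}$ to vanish as formal series on $\{ y = 0 \}$; the actual $\mathcal{C}^{\infty}$ field need not be tangent to $\{ y = 0 \}$ in a $\mathcal{C}^{\infty}$ sense, and the difference is a flat field which can prevent any smooth curve from being a true trajectory — precisely the phenomenon illustrated by the spiral example preceding Theorem 31. To circumvent this I would phrase the invariance condition as an implicit $\mathcal{C}^{\infty}$ system $X(\gamma(t)) \wedge \dot{\gamma}(t) = 0$, a set of $\binom{n}{2}$ scalar equations in the unknown $\gamma$ of which $\hat{\gamma}$ is a formal solution, and invoke Theorem 29 (Malgrange) combined with an $n$-dimensional analogue of the approximation-with-given-jet statement conjectured after Theorem 28. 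This reformulation makes explicit that the obstruction is of the same nature as the open approximation problem identified in Section 2: extending Artin–Tougeron approximation from the analytic to the $\mathcal{C}^{\infty}$ category is the deepest missing ingredient, and a positive answer there would deliver the conjecture directly.
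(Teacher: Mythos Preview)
This statement is presented in the paper as a \emph{conjecture}, not a theorem: the authors give no proof, only the two-dimensional analogue (the theorem on separatrices just above) and the heuristic that Seidenberg reduction plus local analysis at simple singularities settles that case. So there is no ``paper's proof'' to compare against; your proposal is an outline of a possible attack on an open problem, and you are right to flag it as incomplete.

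That said, the outline has two concrete weaknesses beyond the one you already identify. First, your reformulation $X(\gamma(t))\wedge\dot\gamma(t)=0$ is a system of nonlinear first-order ODEs in the unknown $\gamma$, with coefficients that are only $\mathcal{C}^{\infty}$ (they come from the given $X$). Malgrange's theorem on realizing formal relations (the one you call Theorem 29) concerns \emph{linear} relations among fixed generators of a closed ideal and says nothing about such a nonlinear differential system; invoking it here is a category error. Likewise, Tougeron's approximation theorem requires the implicit map $F$ to be real-analytic, which fails precisely because $X$ is merely $\mathcal{C}^{\infty}$; and a blanket ``$\mathcal{C}^{\infty}$ Artin--Tougeron'' statement is not just open but false in general (cf.\ the non-closed ideal example $y^{2}+\exp(-1/x^{2})$ in the paper), so reducing the conjecture to that would not be progress.

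Second, the resolution step you describe only resolves the \emph{curve} $\hat\gamma$, not the vector field. After pulling back, the singularity of $\tilde X$ at the point $p$ can still be arbitrarily degenerate; the two-dimensional proof works because Seidenberg reduces the \emph{field} to simple singularities where stable/center-manifold theorems apply. In dimension $n\geq 3$ no comparable reduction of $\mathcal{C}^{\infty}$ vector fields is available, so the ``local analysis'' step is not a small residual problem but the heart of the difficulty. Your flat-correction argument along $\pi^{-1}(\underline{0})$ is fine for adjusting a realization of $\hat\gamma$, but it does not manufacture genuine $\mathcal{C}^{\infty}$ invariance for the fixed field $X$; that is exactly what the authors leave open.
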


Le contre-exemple \`a l'existence d'int\'egrale premi\`ere $\mathcal{C}^{\infty}$  en pr\'esence d'int\'egrale premi\`ere formelle est d\^u au ph\'enom\`ene de spiralement. La pr\'esence de s\'eparatrice formelle (et donc $\mathcal{C}^{\infty}$) interdit ce spiralement. R. Roussarie \cite{Roussarie} a d\'emontr\'e que si le champ $\mathcal{C}^{\infty}$ $X$, \`a singularit\'e isol\'ee \`a l'origine de $\mathbb{R}^{2}$, poss\`ede une s\'eparatrice formelle et une int\'egrale premi\`ere formelle $\hat{F}$ (ce qui revient \`a demander que les z\'eros formels de $\hat{F}$ soient non r\'eduits \`a $\{ \underline{0} \}$), alors $X$ poss\`ede une int\'egrale premi\`ere $F$, de classe $\mathcal{C}^{\infty}$, non plate. On peut pr\'eciser cet \'enonc\'e pour obtenir la condition $T_{\underline{0}} F = \hat{F}$ (en travaillant sur les int\'egrale premi\`eres minimales formelles pour le voir). \\

Soit $S$ une s\'eparatrice du champ $X$, \`a singularit\'e isol\'ee \`a l'origine de $\mathbb{R}^{2}$. Si $S$ est d\'efinie par $( F = 0 )$, o\`u $F \in \mathcal{E}_{2}$ est \`a singularit\'e isol\'ee, alors $X. F$ est divisible par $F$, i.e. $X. F = g. F$ pour un certain $g \in \mathcal{E}_{2}$. De fa\c{c}on duale, si $\omega$ est une 1-forme duale de $X$, alors $\omega \wedge d F = F. \eta$ pour une certaine 2-forme $\eta$. Si l'on consid\`ere le champ $X = x_{2} \frac{\partial}{\partial x_{1}} - x_{1}\frac{\partial}{\partial x_{2}} - \exp{\frac{- 1}{x_{1}^{2} + x_{2}^{2}}} (x_{1} \frac{\partial}{\partial x_{1}} + x_{2}\frac{\partial}{\partial x_{2}})$ on constate que $X. ( x_{1}^{2} + x_{2}^{2} ) = -2 ( x_{1}^{2} + x_{2}^{2} )\exp{\frac{- 1}{x_{1}^{2} + x_{2}^{2}}}$.\\

On dira que $F \in \mathcal{E}_{2}$, \`a singularit\'e isol\'ee, est une s\'eparatrice elliptique du champ $X$ si $X. F$ est divisible par $F$ et $( F = 0 )$ se r\'eduit \`a  $\{ \underline{0} \}$: $X. F = g . F$ et $F^{- 1} ( 0 ) = \{ \underline{0} \}$. Lorsque $g$ lui m\^eme est elliptique, i.e. $g^{- 1} ( 0 ) = \{ \underline{0} \}$, alors $F$ est une fonction de Lyapounov pour $X$; dans ce cas toutes les trajectoires du champ $X$ adh\'erent \`a l'origine. On a une notion formelle analogue. Soit $\hat{X}$ un champ formel \`a singularit\'e isol\'ee et $\hat{F} \in \hat{\mathcal{E}}_{2}$ \'egalement \`a singularit\'e isol\'ee. Alors $\hat{F}$ d\'efinit une s\'eparatrice formelle elliptique de $\hat{X}$ si $\hat{X}. \hat{F}$ est divisible par $\hat{F}$ ($\hat{X} . \hat{F} = \hat{g}. \hat{F}$ o\`u $\hat{g} \in \hat{\mathcal{E}}_{2}$) et $\hat{F}^{- 1} ( 0 ) = \{ \underline{0} \}$ au sens o\`u $\hat{F} (\underline{0} ) = 0$ et pour tout chemin formel $\hat{\gamma} = ( \hat{\gamma}_{1} , \hat{\gamma}_{2} )$, $\hat{\gamma}_{i} \in \hat{\mathcal{E}}_{1}$, tel que $\hat{F} \circ \hat{\gamma} \equiv 0$; alors $\hat{\gamma}$ est le chemin "constant" $\hat{\gamma} ( t ) \equiv 0$.

\begin{proposition} Soient $\hat{X}$ un champ formel et $\hat{F}$ une s\'eparatrice formelle elliptique de $\hat{X}$. Il existe des r\'ealisations $X$ et $F$ de $\hat{X}$ et $\hat{F}$ respectivement telles que $F$ soit une s\'eparatrice $\mathcal{C}^{\infty}$ et elliptique de $X$.
\end{proposition}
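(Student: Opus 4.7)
Le plan consiste à se ramener au cas où $\hat{F}$ est polynomial, à résoudre l'équation de séparatrice par un argument de type Malgrange, puis à relever par Borel. D'abord, l'adaptation au cas réel du Théorème 4.4 de \cite{Cerveau2}, déjà utilisée dans la preuve du Théorème 31, fournit un difféomorphisme formel $\hat{\Phi} \in \widehat{\mathrm{Diff}}(\mathbb{R}^{2}_{0})$ tel que $\hat{F} \circ \hat{\Phi} = P$ soit un polynôme. L'hypothèse d'ellipticité formelle $\hat{F}^{-1}(0) = \{\underline{0}\}$ se transcrit en l'absence de chemin formel réel non trivial dans $(P = 0)$; le lemme de sélection des courbes appliqué à l'ensemble analytique réel $P^{-1}(0)$ donne alors $P^{-1}(0) = \{\underline{0}\}$ au voisinage de l'origine de $\mathbb{R}^{2}$, si bien que $P$ est elliptique au sens réel.

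On transporte ensuite l'équation de séparatrice. Posant $\hat{Y} = \hat{\Phi}^{-1}_{\ast} \hat{X}$ et $\hat{h} = \hat{g} \circ \hat{\Phi}$, la relation $\hat{X} . \hat{F} = \hat{g} \hat{F}$ devient $\hat{Y} . P = \hat{h} P$, soit, en écrivant $\hat{Y} = \hat{Y}_{1} \frac{\partial}{\partial x_{1}} + \hat{Y}_{2} \frac{\partial}{\partial x_{2}}$,
\[
\hat{Y}_{1} \frac{\partial P}{\partial x_{1}} + \hat{Y}_{2} \frac{\partial P}{\partial x_{2}} - \hat{h} P = 0.
\]
L'idéal $I = \langle \frac{\partial P}{\partial x_{1}} , \frac{\partial P}{\partial x_{2}} , P \rangle \subset \mathcal{E}_{2}$ est fermé d'après le Théorème 22, puisqu'engendré par des polynômes. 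Le Théorème 29 (Malgrange) utilisé avec préservation des jets Tayloriens fournit alors des éléments $Y_{1}, Y_{2}, h \in \mathcal{E}_{2}$ réalisant $\hat{Y}_{1}, \hat{Y}_{2}, \hat{h}$ respectivement et satisfaisant $Y . P = h P$, où $Y = Y_{1} \frac{\partial}{\partial x_{1}} + Y_{2} \frac{\partial}{\partial x_{2}}$.

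On recolle par Borel. Soit $\Phi$ une réalisation de Borel de $\hat{\Phi}$: comme $J^{1} \hat{\Phi}$ est inversible, $\Phi$ est un germe de difféomorphisme $\mathcal{C}^{\infty}$ à l'origine. Posant $X = \Phi_{\ast} Y$ et $F = P \circ \Phi^{-1}$, on obtient $T_{\underline{0}} X = \hat{X}$, $T_{\underline{0}} F = \hat{F}$, $X . F = (h \circ \Phi^{-1}) F$, et l'ellipticité $F^{-1}(0) = \Phi(P^{-1}(0)) = \{\underline{0}\}$ au voisinage de l'origine. Le point délicat est l'emploi du Théorème 29 dans sa version avec jets prescrits: à partir de réalisations de Borel naïves $\tilde{Y}_{j}, \tilde{h}$, le reste $R = \sum \tilde{Y}_{j} \partial_{j} P - \tilde{h} P$ est plat et il s'agit de l'écrire $R = \sum k_{j} \partial_{j} P - k_{0} P$ avec $k_{j}$ plates; c'est la fermeture de $I$ jointe à l'analycité de ses générateurs qui garantit cette représentation.
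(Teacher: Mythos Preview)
Your proof is correct and in fact more direct than the paper's. Both arguments begin identically by reducing $\hat{F}$ to a polynomial $P$ via the Cerveau--Mattei normalisation, and both finish by pulling back through a Borel realization of $\hat{\Phi}$. The divergence is in the middle step. You treat the equation $\hat{Y}_{1}\partial_{1}P + \hat{Y}_{2}\partial_{2}P - \hat{h}P = 0$ directly as a formal relation among the polynomial generators $\partial_{1}P,\partial_{2}P,P$ and invoke the jet-preserving version of Malgrange's lifting theorem (which, as you correctly note, amounts to absorbing the flat remainder $R$ into the ideal with flat coefficients). The paper instead invokes the Brian\c{c}on--Skoda theorem to produce an \emph{analytic} vector field $Y$ with $Y\cdot P = P^{2}$, rewrites the separatrix condition as the relation $(2\hat{g}Y - 2P\hat{X}')\cdot P = 0$ among the analytic coefficients $P^{2}, P\partial_{1}P, P\partial_{2}P$, applies Malgrange to that, and then divides out the extra factor of $P$ (using implicitly that $P^{-1}(0)=\{0\}$).

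What your route buys is economy: Brian\c{c}on--Skoda is a nontrivial input, and you show it is dispensable here. What the paper's route buys is that the reduction to a syzygy among analytic functions is made very explicit; in your version the ``point d\'elicat'' you flag (flat remainder written with flat coefficients) deserves one more sentence: it is the isolated-singularity hypothesis on $\hat{F}$ that guarantees the Jacobian ideal of $P$ has finite colength, so that $\mathcal{M}^{N}\subset I$ for some $N$ and every flat function lies in $I$ with flat coefficients. You use ellipticity only for the geometric conclusion $F^{-1}(0)=\{0\}$, but it (via isolated singularity) is also what makes your Malgrange step go through; it would be worth saying so.
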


\begin{proof}
Soit $\hat{\Phi} \in \widehat{\mathrm{Diff}}(\mathbb{R}^{2}_{0})$ tel que $\hat{F} \circ \hat{\Phi} = P$ soit un polyn\^ome. On se ram\`ene donc \`a $\hat{X}^{'} . P = P. \hat{g}$, o\`u $\hat{X}^{'}$ est le transform\'e de $\hat{X}$ par $\hat{\Phi}$. Le th\'eor\`eme de Brian\c{c}on et Skoda \cite{Briancon} affirme que $P^{2}$ appartient \`a son id\'eal Jacobien (ceci dans la classe analytique). Ce qui s'interpr\`ete comme suit: il existe un champ de vecteur analytique $Y$ tel que $P^{2} = Y . P$. Notons que $\hat{X} . P^{2} = 2 P^{2} \hat{g} = 2 \hat{g} . Y . P$. Par suite: $( 2 \hat{g} Y - 2 P \hat{X}^{'} ) . P = 0$ qui s'\'ecrit avec des notations \'evidentes sous forme:
$$ 2 \hat{g} ( Y_{1} \frac{\partial P}{\partial x_{1}} + Y_{2} \frac{\partial P}{\partial x_{2}} ) - 2 P  \frac{\partial P}{\partial x_{1}} . \hat{X}^{'}_{1} - 2 P  \frac{\partial P}{\partial x_{2}} . \hat{X}^{'}_{2} = 0$$
que l'on peut voir comme une \'equation lin\'eaire en les inconnues $g$, $X^{'}_{1}$ et $X^{'}_{2}$ ayant une solution formelle $\hat{g}$, $\hat{X}^{'}_{1}$ et $\hat{X}^{'}_{2}$. Suivant Malgrange il existe des r\'ealisations $\mathcal{C}^{\infty}$  $g$, $X^{'}_{1}$ et $X^{'}_{2}$ telle que $( 2 g Y - 2 P X^{'} ) . P = 0$ avec $X^{'} = X^{'}_{1} \frac{\partial}{\partial x_{1}} + X^{'}_{2} \frac{\partial}{\partial x_{2}}$. On en d\'eduit que $X^{'}. P = g P$. On choisit une r\'ealisation de Borel de $\hat{\Phi}$; le transform\'e $X$ de $X^{'}$ par $\Phi^{- 1}$ et $F = P \circ \Phi^{- 1}$ sont des r\'ealisations de $\hat{X}$ et $\hat{F}$ qui conviennent. 
\end{proof} 

\section{Probl\`emes mixtes}

\subsection{Le cas des groupes}

Etant donn\'e un sous groupe $\hat{G} \subset \widehat{\mathrm{Diff}}(\mathbb{R}^{n}_{0})$ de diff\'eomorphismes formels, on cherche un sous groupe $G \subset \mathrm{Diff}(\mathbb{R}^{n}_{0})$ de diff\'eomorphismes $\mathcal{C}^{\infty}$ tels que l'application $T_{\underline{0}} : G \rightarrow \hat{G}$ soit un isomorphisme. Nous ne savons pas r\'epondre \`a cette question en toute g\'en\'eralit\'e, mais voici quelques exemples o\`u la r\'eponse est positive.

\subsubsection{Le cas des sous groupes libres}

Consid\'erons un groupe libre engendr\'e par des \'el\'ements $\hat{f}_{1} , \ldots , \hat{f}_{m} \in \widehat{\mathrm{Diff}}(\mathbb{R}^{n}_{0})$. Si $f_{1} , \ldots , f_{m}$ sont des r\'ealisations quelconques de $\hat{f}_{1} , \ldots , \hat{f}_{m}$ respectivement ($T_{\underline{0}} f_{k} = \hat{f}_{k}$), alors le groupe $G$ engendr\'e par les $f_{k}$ est libre. En effet si $M (f_{1} , \ldots , f_{m}) = \mathrm{Id}_{\mathbb{R}^{n}}$ est un mot r\'eduit en les $f_{k}$ \'egal \`a l'identit\'e, alors $M (\hat{f}_{1} , \ldots , \hat{f}_{m} ) = \mathrm{Id}_{\mathbb{R}^{n}}$ et le mot est donc trivial. C'est donc un fait \'el\'ementaire qui fait que $T_{\underline{0}} : G \rightarrow \hat{G}$ soit un isomorphisme.

\subsubsection{Le cas des produits libres de groupes finis}

 On se donne un sous groupe $\hat{G} = \hat{G}_{1} \ast \ldots \ast \hat{G}_{m} \subset \widehat{\mathrm{Diff}}(\mathbb{R}^{n}_{0})$, produit libre d'un nombre fini de sous groupes finis $\hat{G}_{k} \subset \widehat{\mathrm{Diff}}(\mathbb{R}^{n}_{0})$. Rappelons que tout sous groupe fini de $\widehat{\mathrm{Diff}}(\mathbb{R}^{n}_{0})$ est conjugu\'e \`a sa partie lin\'eaire. Pour chacun des groupes $G_{k}$ il existe $\hat{\Phi}_{k} \in \widehat{\mathrm{Diff}}(\mathbb{R}^{n}_{0})$ tels que $\hat{G}_{k} = \hat{\Phi}_{k} \circ G^{1}_{k} \circ \hat{\Phi}_{k}^{- 1}$, o\`u $G^{1}_{k}$ est le sous groupe lin\'eaire des 1-jets de $\hat{G}_{k}$. Si  les $\Phi_{k}$ sont des r\'ealisations des $\hat{\Phi}_{k}$  on consid\`ere les sous groupes  $G_{k} = \Phi_{k} \circ G^{1}_{k} \circ \Phi_{k}^{- 1}$. Soit $G$ le groupe engendr\'e par les $G_{k}$. Par construction le morphisme $T_{\underline{0}} : G \rightarrow \hat{G}$ est surjectif et le groupe $G$ est le produit libre des groupes $G_{k}$. Par suite $T_{\underline{0}} : G \rightarrow \hat{G}$ est un isomorphisme.
Ce r\'esultat se g\'en\'eralise au cas o\`u $\hat{G}$ est le produit libre de groupes lin\'earisables $\hat{G}_{k}$; la d\'emarche est analogue \`a celle qui pr\'ec\`ede: on choisit des r\'ealisations $\Phi_{k}$ des $\hat{\Phi}_{k}$ et l'on introduit les groupes $G_{k} = \Phi_{k} \circ G^{1}_{k} \circ \Phi_{k}^{- 1}$, o\`u $G^{1}_{k}$ est le groupe lin\'eaire $J^{1} \hat{G}_{k}$. Le groupe $G$ engendr\'e par les $G_{k}$ est le produit libre $G_{1} \star \ldots \star G_{m}$ et $T_{\underline{0}} : G \rightarrow \hat{G}$ est un isomorphisme.\\

En dimension 1 la classification des sous groupes r\'esolubles de diff\'eomor-phismes formels est bien connue et fait partie du folklore \cite{Cerveau1}. En fait tout sous groupe $\hat{G} \subset \widehat{\mathrm{Diff}}(\mathbb{R}^{1}_{0})$, r\'esoluble, est formellement conjugu\'e \`a un sous groupes de diff\'eomorphismes analytique. On en d\'eduit le:

\begin{theorem} Soit $\hat{G} \subset \widehat{\mathrm{Diff}}(\mathbb{R}^{1}_{0})$ un sous groupe r\'esoluble; il existe un sous groupe $G \subset \mathrm{Diff}(\mathbb{R}^{1}_{0})$ tel que $T_{\underline{0}} G = \hat{G}$ et $T_{\underline{0}} : G \rightarrow \hat{G}$ soit un isomorphisme.
\end{theorem}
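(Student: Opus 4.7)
The plan is to reduce the statement to the analytic case using the cited classification, and then transport back using an ordinary Borel realization of the conjugating diffeomorphism.

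First, I would invoke the folklore classification result cited just before the theorem (reference \cite{Cerveau1}): every resolvable subgroup $\hat{G}$ of $\widehat{\mathrm{Diff}}(\mathbb{R}^{1}_{0})$ is formally conjugate to a subgroup of the group $\mathrm{Diff}^{\omega}(\mathbb{R}^{1}_{0})$ of germs of analytic diffeomorphisms. Concretely, there exist $\hat{\Phi} \in \widehat{\mathrm{Diff}}(\mathbb{R}^{1}_{0})$ and an analytic subgroup $G^{an} \subset \mathrm{Diff}^{\omega}(\mathbb{R}^{1}_{0})$ such that
$$\hat{G} = \hat{\Phi} \circ G^{an} \circ \hat{\Phi}^{-1}.$$
The subgroup $G^{an}$ already sits inside $\mathrm{Diff}(\mathbb{R}^{1}_{0})$, and because any analytic germ is determined by its Taylor series, the restriction of $T_{\underline{0}}$ to $G^{an}$ is injective; hence $T_{\underline{0}} : G^{an} \rightarrow J^{\infty}(G^{an})$ is already a group isomorphism onto its image.

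Next, I would apply the classical Borel theorem to pick any $\mathcal{C}^{\infty}$ realization $\Phi \in \mathrm{Diff}(\mathbb{R}^{1}_{0})$ of $\hat{\Phi}$ (and note that being tangent to $\hat{\Phi}$ at all orders, $\Phi$ is automatically a genuine diffeomorphism germ near $\underline{0}$). Define
$$G := \Phi \circ G^{an} \circ \Phi^{-1} \subset \mathrm{Diff}(\mathbb{R}^{1}_{0}).$$
This is a subgroup of $\mathrm{Diff}(\mathbb{R}^{1}_{0})$, isomorphic as an abstract group to $G^{an}$ via conjugation. Applying $T_{\underline{0}}$, which is a morphism of groups, yields
$$T_{\underline{0}}(G) = T_{\underline{0}}(\Phi) \circ T_{\underline{0}}(G^{an}) \circ T_{\underline{0}}(\Phi)^{-1} = \hat{\Phi} \circ G^{an} \circ \hat{\Phi}^{-1} = \hat{G},$$
so $T_{\underline{0}}: G \rightarrow \hat{G}$ is surjective.

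For injectivity, suppose $g = \Phi \circ h \circ \Phi^{-1} \in G$ (with $h \in G^{an}$) satisfies $T_{\underline{0}}(g) = \mathrm{Id}_{\mathbb{R}}$. Then $\hat{\Phi} \circ T_{\underline{0}}(h) \circ \hat{\Phi}^{-1} = \mathrm{Id}$, whence $T_{\underline{0}}(h) = \mathrm{Id}$. Since $h$ is analytic, this forces $h = \mathrm{Id}_{\mathbb{R}}$, and therefore $g = \mathrm{Id}_{\mathbb{R}}$. Thus $T_{\underline{0}}: G \rightarrow \hat{G}$ is an isomorphism.

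The only non-routine input is the classification theorem asserting formal conjugacy of a resolvable subgroup of $\widehat{\mathrm{Diff}}(\mathbb{R}^{1}_{0})$ to an analytic model; given that result, the rest of the argument is just conjugation by a Borel realization, in the same spirit as the arguments for free products of linearizable groups treated earlier in the section. I do not anticipate a serious obstacle beyond quoting the classification precisely.
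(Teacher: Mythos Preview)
Your proposal is correct and follows exactly the approach the paper takes: the paper states just before the theorem that every resolvable subgroup of $\widehat{\mathrm{Diff}}(\mathbb{R}^{1}_{0})$ is formally conjugate to a subgroup of analytic diffeomorphisms (citing \cite{Cerveau1}), and then simply writes ``On en d\'eduit le:'' before stating the theorem. Your write-up spells out precisely the deduction the paper leaves implicit---conjugating the analytic model by a Borel realization of $\hat{\Phi}$ and checking that $T_{\underline{0}}$ is an isomorphism using the injectivity of the Taylor map on analytic germs.
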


Voici encore un cas sp\'ecial qui s'applique en particulier aux repr\'esentations des groupes de surfaces dans $\widehat{\mathrm{Diff}}(\mathbb{R}^{n}_{0})$. On se donne des diff\'eomorphismes formels $\hat{a} , \hat{b} , \hat{c}_{1} , \ldots , \hat{c}_{p} \in \widehat{\mathrm{Diff}}(\mathbb{R}^{n}_{0})$ satisfaisant la relation:

$$ \hat{a}  \hat{b}  \hat{a}^{-1} = \mathcal{R} ( \hat{b} , \hat{c}_{1} , \ldots , \hat{c}_{p})$$
o\`u $\mathcal{R}$ est un mot en les $\hat{b} , \hat{c}_{1} , \ldots \hat{c}_{p}$. Une telle relation indique que $\hat{b}$ est conjugu\'e \`a $\mathcal{R} ( \hat{b} , \hat{c}_{1} , \ldots , \hat{c}_{p})$ par le diff\'eomorphisme formel $\hat{a}$. Soit $b_{1} = J^{1} \hat{b}$ la partie lin\'eaire de $\hat{b}$; nous supposons  $b_{1}$ semi-simple et hyperbolique. Ceci signifie que si $\lambda_{1} , \ldots , \lambda_{n}$ sont les valeurs propres (complexes) de $b_{1}$ alors aucun des $\lambda_{j}$ n'est imaginaire pur.

\begin{proposition} Sous les hypoth\`eses pr\'ec\'edentes il existe $a , b , c_{1} , \ldots , c_{p} \in \mathrm{Diff}(\mathbb{R}^{n}_{0})$ r\'ealisations $\mathcal{C}^{\infty}$ de
$\hat{a} , \hat{b} , \hat{c}_{1} , \ldots , \hat{c}_{p}$ et satisfaisant \`a la relation:

$$ a  b  a^{-1} = \mathcal{R} ( b , c_{1} , \ldots , c_{p}) .$$
\end{proposition}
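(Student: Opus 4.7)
The plan is to start from arbitrary $\mathcal{C}^{\infty}$ realizations $\tilde{a}, b, c_{1}, \ldots, c_{p}$ of $\hat{a}, \hat{b}, \hat{c}_{1}, \ldots, \hat{c}_{p}$ (which exist by the usual Borel theorem) and then to modify only $\tilde{a}$ into $a := \phi \circ \tilde{a}$, where $\phi \in \mathrm{Diff}(\mathbb{R}^{n}_{0})$ is a diffeomorphism tangent to the identity to infinite order at $\underline{0}$. Such a $\phi$ leaves the Taylor jet of $\tilde{a}$ unchanged, so that $T_{\underline{0}} a = \hat{a}$ automatically; the only remaining task is to choose $\phi$ so that the group relation $a \, b \, a^{-1} = \mathcal{R}(b, c_{1}, \ldots, c_{p})$ actually holds in $\mathrm{Diff}(\mathbb{R}^{n}_{0})$.

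Introduce the two germs
$$F := \tilde{a} \, b \, \tilde{a}^{-1} \quad \text{and} \quad G := \mathcal{R}(b, c_{1}, \ldots, c_{p}).$$
The assumed formal identity $\hat{a}\hat{b}\hat{a}^{-1} = \mathcal{R}(\hat{b}, \hat{c}_{1}, \ldots, \hat{c}_{p})$ yields $T_{\underline{0}} F = T_{\underline{0}} G$. Their common linear part is $(J^{1}\tilde{a}) \, b_{1} \, (J^{1}\tilde{a})^{-1}$, conjugate to $b_{1}$ and therefore also semi-simple and hyperbolic. Hence $F$ and $G$ are two $\mathcal{C}^{\infty}$ germs of diffeomorphisms sharing a hyperbolic fixed point at $\underline{0}$ and having identical infinite jets there.

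At this point one invokes Sternberg's $\mathcal{C}^{\infty}$ rigidity theorem for hyperbolic germs: two $\mathcal{C}^{\infty}$ germs of diffeomorphisms at a hyperbolic fixed point which have the same Taylor expansion are conjugate by a $\mathcal{C}^{\infty}$ diffeomorphism tangent to the identity to infinite order. Applied to $F$ and $G$, this provides $\phi \in \mathrm{Diff}(\mathbb{R}^{n}_{0})$ with $T_{\underline{0}}\phi = \mathrm{Id}$ and $\phi F \phi^{-1} = G$. Setting $a := \phi \circ \tilde{a}$ one then checks $T_{\underline{0}} a = T_{\underline{0}}\phi \circ T_{\underline{0}}\tilde{a} = \hat{a}$ and
$$a \, b \, a^{-1} \;=\; \phi \, (\tilde{a} \, b \, \tilde{a}^{-1}) \, \phi^{-1} \;=\; \phi F \phi^{-1} \;=\; G \;=\; \mathcal{R}(b, c_{1}, \ldots, c_{p}),$$
as required.

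The hard step is the appeal to Sternberg's rigidity, and this is precisely where the hyperbolicity assumption on $b_{1}$ is indispensable: it guarantees that at $\underline{0}$ the full $\mathcal{C}^{\infty}$-conjugacy class of a germ is pinned down by its Taylor series, so that the formal identity can be promoted to a genuine $\mathcal{C}^{\infty}$ one after a flat correction. Without hyperbolicity, small-divisor phenomena and the presence of formal invariants not seen by the jet would obstruct this step. Note also that the argument only modifies $a$; accommodating several simultaneous relations (e.g.\ for a surface group presentation) would require a more delicate iteration or a joint rigidity statement, which seems the natural next level of difficulty.
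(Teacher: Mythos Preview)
Your proof is correct and follows essentially the same route as the paper. The paper applies Sternberg's theorem directly to the pair $b$ and $\mathcal{R}(b,c_{1},\ldots,c_{p})$, observing that they are formally conjugate by $\hat{a}$ and hence, since $b$ is hyperbolic, $\mathcal{C}^{\infty}$-conjugate by some $a$ with $T_{\underline{0}}a=\hat{a}$; your version first picks a preliminary realization $\tilde{a}$ and then corrects it by a flat $\phi$, which merely unwinds the same use of Sternberg--Chen rigidity one step further.
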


\begin{proof}
Soient $ b , c_{1} , \ldots , c_{p}$ des r\'ealisations $\mathcal{C}^{\infty}$ des diff\'eomorphismes formels $\hat{b} , \hat{c}_{1} , \ldots , \hat{c}_{p}$. Les diff\'eomorphismes $b$ et $\mathcal{R} ( b , c_{1} , \ldots , c_{p})$ sont formellement conjugu\'es. Comme $b$ est hyperbolique ils sont $\mathcal{C}^{\infty}$ conjugu\'es par un diff\'eomorphismes $a$ ayant $\hat{a}$ comme jet infini; c'est le Th\'eor\`eme de lin\'earisation de Sternberg \cite{Shlomo1}, \cite{Shlomo2}. 
\end{proof}

En dimension 1, il y a un r\'esultat de normalisation d\^u \`a F. Takens \cite{Takens}:

\begin{theorem} Soit $f \in \mathrm{Diff}(\mathbb{R}^{1}_{0})$ un germe de diff\'eomorphisme $\mathcal{C}^{\infty}$. On suppose que $f - \mathrm{Id}_{\mathbb{R}}$ n'est pas plat:
$f - \mathrm{Id}_{\mathbb{R}} \notin \mathcal{M}_{1}^{\infty}$. Alors si $g \in \mathrm{Diff}(\mathbb{R}^{1}_{0})$ est formellement conjugu\'e \`a $f$ par $\hat{h}$:
$$\hat{g} = \hat{h} \circ \hat{f} \circ \hat{h}^{- 1}$$
il existe $h \in \mathrm{Diff}(\mathbb{R}^{1}_{0})$ tel que $T_{0} h = \hat{h}$ et $g = h \circ f \circ h^{- 1}$.
\end{theorem}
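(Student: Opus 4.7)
The plan is to reduce, via Borel's theorem, to a rigidity statement for two smooth diffeomorphisms of $\mathbb{R}^1$ sharing the same infinite Taylor jet at $0$, and then to establish that rigidity by a path method whose infinitesimal form is a one-dimensional cohomological equation. First I would lift $\hat h$ to a smooth germ $h_0 \in \mathrm{Diff}(\mathbb{R}^1_0)$ using Borel and set $\tilde f := h_0^{-1} \circ g \circ h_0$. The relation $\hat g = \hat h \circ \hat f \circ \hat h^{-1}$ gives $T_0 \tilde f = \hat f = T_0 f$, so $\varphi := \tilde f - f$ is flat at $0$. It then suffices to produce $\psi \in \mathrm{Diff}(\mathbb{R}^1_0)$ with $T_0 \psi = \mathrm{Id}_\mathbb{R}$ and $\psi \circ f \circ \psi^{-1} = \tilde f$; the diffeomorphism $h := h_0 \circ \psi$ will then realize $\hat h$ and satisfy $g = h \circ f \circ h^{-1}$.

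To construct $\psi$, I would consider the linear interpolation $f_t := f + t\varphi$ for $t \in [0,1]$. Since $f_t'(0) = f'(0) \neq 0$, each $f_t$ is a smooth diffeomorphism germ, and $T_0 f_t = \hat f$ independently of $t$. I then look for a smooth isotopy $(\psi_t)_{t \in [0,1]}$ with $\psi_0 = \mathrm{Id}_\mathbb{R}$ and $\psi_t \circ f = f_t \circ \psi_t$. Writing the generator $X_t := \dot\psi_t \circ \psi_t^{-1}$ and differentiating the conjugacy identity in $t$ yields the cohomological equation
\begin{equation*}
X_t \circ f_t - f_t' \cdot X_t = \varphi.
\end{equation*}
Any smooth solution $X_t$ that is flat at $0$ and smooth in $t$ integrates to an isotopy satisfying $T_0 \psi_t = \mathrm{Id}_\mathbb{R}$, and $\psi := \psi_1$ finishes the proof.

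The hypothesis $f - \mathrm{Id}_\mathbb{R} \notin \mathcal{M}_1^\infty$ is precisely what makes this cohomological equation solvable. If $\lambda := f'(0) \neq 1$, the dynamics of $f_t$ is (formally) hyperbolic for every $t$, and Sternberg's theorem \cite{Shlomo1}, applied parametrically, smoothly conjugates $f_t$ to $x \mapsto \lambda x$; the equation becomes $X_t(\lambda x) - \lambda X_t(x) = (\text{flat})$, solved explicitly by a geometrically convergent telescoping series along the orbits (the involutive case $\lambda = -1$ being treated by passing to $f^2$). If $\lambda = 1$, the non-flatness hypothesis forces $f(x) = x + a x^k + O(x^{k+1})$ with $a \neq 0$, $k \geq 2$; the orbits $(f_t^n(x))_{n \geq 0}$ then converge to $0$ at polynomial speed $\sim n^{-1/(k-1)}$, and a formal Neumann-type ansatz
\begin{equation*}
X_t(x) := - \sum_{n \geq 0} \Bigl( \prod_{j=1}^{n} f_t'(f_t^j(x)) \Bigr)^{-1} \varphi(f_t^n(x))
\end{equation*}
produces a candidate in which the flatness of $\varphi$ defeats the at-most-polynomial growth of the derivative products along orbits.

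The main obstacle is the parabolic regime $\lambda = 1$: one must show that this Neumann series, together with all its partial derivatives in $x$ and $t$, converges locally uniformly and defines a germ flat at $0$. This requires careful estimates on $\prod_{j=1}^{n} f_t'(f_t^j(x))$ along the slowly converging orbits and forms the technical heart of Takens' analysis in \cite{Takens}. The non-flatness hypothesis is indispensable here: a purely flat perturbation of $\mathrm{Id}_\mathbb{R}$ has no finite Takens normal form and carries smooth conjugation moduli, so one cannot in general trivialize such a perturbation by a flat diffeomorphism.
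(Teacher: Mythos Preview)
The paper does not prove this statement: it is quoted as a normalization result of F.~Takens \cite{Takens} and used as a black box to obtain the subsequent proposition. There is therefore no ``paper's own proof'' to compare against.

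Your sketch follows the classical route to Takens' theorem: Borel-lift the formal conjugacy to reduce to the case of two $\mathcal{C}^\infty$ germs with identical infinite jet, then interpolate linearly and solve the resulting one-dimensional cohomological equation $X_t\circ f_t - f_t'\cdot X_t = \varphi$ with flat right-hand side. The separation into the hyperbolic case (Sternberg) and the parabolic case $f'(0)=1$ (Neumann series along the polynomially converging orbits, flatness of $\varphi$ beating the polynomial growth of $\prod f_t'(f_t^j(x))$) is exactly the structure of Takens' argument, and you correctly flag the parabolic estimates as the technical core. Two small points. First, the remark ``$\lambda=-1$ by passing to $f^2$'' is too quick: a conjugacy of $f^2$ to $\tilde f^2$ does not automatically yield one of $f$ to $\tilde f$, and $(f^2)'(0)=1$ puts you back in the parabolic regime where $f^2-\mathrm{Id}$ may well be flat (e.g.\ $f(x)=-x$); the case $f'(0)=-1$ is usually handled by first conjugating $f$ to a normal form $-x$ or $-x/(1+ax^{2p})^{1/(2p)}$ and arguing separately. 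Second, in your Neumann formula the product should start at $j=0$ rather than $j=1$; this is cosmetic but worth fixing. Apart from these details, the strategy is sound and matches the literature.
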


Ceci permet d'avoir une variante de l'\'enonc\'e pr\'ec\'edent.

\begin{proposition} Soient $\hat{a} , \hat{b} , \hat{c}_{1} , \ldots , \hat{c}_{p}$ des \'el\'ements non triviaux de $\widehat{\mathrm{Diff}}(\mathbb{R}^{1}_{0})$ satisfaisant une relation du type:
 $$ \hat{a}  \hat{b}  \hat{a}^{-1} = \mathcal{R} ( \hat{b} , \hat{c}_{1} , \ldots , \hat{c}_{p}) .$$
 Il existe des r\'ealisations $\mathcal{C}^{\infty}$ des $\hat{a}$, $\hat{b}$, $\hat{c}_{j}$ satisfaisant la m\^eme relation.
\end{proposition}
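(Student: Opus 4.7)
Le principe est de reprendre la strat\'egie de la Proposition 34, mais en rempla\c{c}ant le th\'eor\`eme de lin\'earisation de Sternberg (valable en dimension quelconque pour un point fixe hyperbolique) par le th\'eor\`eme de F. Takens (Th\'eor\`eme 34) qui est sp\'ecifique \`a la dimension 1 et qui ne requiert qu'une hypoth\`ese de non platitude sur $f - \mathrm{Id}_{\mathbb{R}}$.

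Concr\`etement, le plan est le suivant. Je choisis d'abord, par application du th\'eor\`eme de Borel, des r\'ealisations $\mathcal{C}^{\infty}$ quelconques $b , c_{1} , \ldots , c_{p} \in \mathrm{Diff}(\mathbb{R}^{1}_{0})$ de $\hat{b} , \hat{c}_{1} , \ldots , \hat{c}_{p}$. Je pose alors
$$g = \mathcal{R} ( b , c_{1} , \ldots , c_{p} ) \in \mathrm{Diff}(\mathbb{R}^{1}_{0}).$$
Le morphisme $T_{\underline{0}} : \mathrm{Diff}(\mathbb{R}^{1}_{0}) \rightarrow \widehat{\mathrm{Diff}}(\mathbb{R}^{1}_{0})$ \'etant un morphisme de groupe, on a imm\'ediatement $T_{\underline{0}} g = \mathcal{R} ( \hat{b} , \hat{c}_{1} , \ldots , \hat{c}_{p} ) = \hat{a} \hat{b} \hat{a}^{- 1}$. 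Autrement dit $g$ et $b$ sont formellement conjugu\'es, et la conjuguante formelle est pr\'ecis\'ement $\hat{a}$.

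Il reste alors \`a v\'erifier la condition de non platitude requise par le th\'eor\`eme de Takens: comme $\hat{b}$ est non trivial par hypoth\`ese, on a $\hat{b} - \mathrm{Id}_{\mathbb{R}} \neq 0$ dans $\hat{\mathcal{E}}_{1}$; mais $T_{\underline{0}} ( b - \mathrm{Id}_{\mathbb{R}} ) = \hat{b} - \mathrm{Id}_{\mathbb{R}}$, donc $b - \mathrm{Id}_{\mathbb{R}}$ n'appartient pas \`a $\mathcal{M}_{1}^{\infty}$. Le Th\'eor\`eme 34 de Takens s'applique alors \`a la paire $(b , g)$, formellement conjugu\'ee par $\hat{a}$, et fournit un diff\'eomorphisme $a \in \mathrm{Diff}(\mathbb{R}^{1}_{0})$ v\'erifiant $T_{\underline{0}} a = \hat{a}$ et $g = a \circ b \circ a^{- 1}$, ce qui est exactement la relation cherch\'ee $a b a^{- 1} = \mathcal{R} ( b , c_{1} , \ldots , c_{p} )$.

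Le point d\'elicat n'est donc pas technique: il r\'eside uniquement dans le fait que l'hypoth\`ese \og non triviaux \fg\ sur les g\'en\'erateurs, et en particulier sur $\hat{b}$, est exactement ce qui assure l'hypoth\`ese de non platitude de Takens. Si $\hat{b}$ \'etait \'egal \`a l'identit\'e formelle, la construction ci-dessus \'echouerait et la question deviendrait triviale ou vide de sens selon les cas. On remarquera aussi que, contrairement \`a la Proposition 34, on n'a ici aucun choix \`a faire concernant la partie lin\'eaire: la dimension 1 rend automatiques les conditions spectrales, c'est le prix \`a payer par une hypoth\`ese de non platitude plut\^ot qu'une hypoth\`ese d'hyperbolicit\'e.
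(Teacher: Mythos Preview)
Your proof is correct and is exactly the argument the paper has in mind: the text introduces this proposition by saying \og Ceci permet d'avoir une variante de l'\'enonc\'e pr\'ec\'edent \fg, i.e.\ one simply replays the proof of Proposition~34 with Sternberg replaced by Takens, the non-triviality of $\hat{b}$ providing the non-flatness hypothesis. The paper leaves the proof implicit, and your write-up fills in precisely the intended details.
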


\begin{remark} Cet \'enonc\'e s'applique en particulier aux repr\'esentations $\hat{\rho} : G_{n} \rightarrow \widehat{\mathrm{Diff}}(\mathbb{R}^{1}_{0})$ du groupe fondamental
$G_{n}$ de la surface de genre $n$. Un tel $\hat{\rho}$ poss\`ede une r\'ealisation $\mathcal{C}^{\infty}$ $\rho : G_{n} \rightarrow \mathrm{Diff}(\mathbb{R}^{1}_{0})$.
\end{remark}

Les exemples pr\'ec\'edents conduisent \`a la: \\

\begin{conjecture}
Soit $\hat{G} \subset \widehat{\mathrm{Diff}}(\mathbb{R}^{n}_{0})$ un sous groupe de diff\'eomorphismes formels. Il existe une r\'ealisation $\mathcal{C}^{\infty}$, $G \subset \mathrm{Diff}(\mathbb{R}^{n}_{0})$ de $\hat{G}$ telle que $T_{\underline{0}} : G \rightarrow \hat{G}$ soit un isomorphisme.
\end{conjecture}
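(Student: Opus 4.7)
\medskip

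L'approche consisterait d'abord \`a se ramener au cas o\`u $\hat{G}$ est de type fini, muni d'une pr\'esentation
$$\hat{G} = \langle \hat{f}_{1}, \ldots, \hat{f}_{m} \mid \hat{R}_{\alpha}(\hat{f}_{1}, \ldots, \hat{f}_{m}) = \mathrm{Id}, \ \alpha \in A \rangle,$$
le cas g\'en\'eral s'abordant par un raisonnement transfini \`a partir du cas finiment pr\'esent\'e. Le cadre naturel est la suite exacte
$$1 \longrightarrow \mathrm{Flat}(\mathbb{R}^{n}_{0}) \longrightarrow \mathrm{Diff}(\mathbb{R}^{n}_{0}) \stackrel{T_{\underline{0}}}{\longrightarrow} \widehat{\mathrm{Diff}}(\mathbb{R}^{n}_{0}) \longrightarrow 1,$$
o\`u $\mathrm{Flat}(\mathbb{R}^{n}_{0}) = \ker T_{\underline{0}}$ d\'esigne le sous groupe distingu\'e des diff\'eomorphismes plats. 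Construire $G$ revient \`a produire une section multiplicative (non lin\'eaire) de $T_{\underline{0}}$ au-dessus de $\hat{G}$.

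Je partirais ensuite de r\'ealisations de Borel arbitraires $f_{i}^{(0)} \in \mathrm{Diff}(\mathbb{R}^{n}_{0})$ des $\hat{f}_{i}$. Pour chaque relation on a $R_{\alpha}(f_{1}^{(0)}, \ldots, f_{m}^{(0)}) \in \mathrm{Flat}(\mathbb{R}^{n}_{0})$, et il s'agit de modifier les $f_{i}^{(0)}$ en $f_{i} = \phi_{i} \circ f_{i}^{(0)}$ par composition avec des diff\'eomorphismes plats bien choisis $\phi_{i}$, de mani\`ere \`a annuler simultan\'ement tous ces r\'esidus plats. L'obstruction \`a un tel rel\`evement est une classe dans un $H^{2}$ non ab\'elien \`a coefficients dans $\mathrm{Flat}(\mathbb{R}^{n}_{0})$ muni de l'action de $\hat{G}$ par conjugaison. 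Dans les cas d\'ej\`a trait\'es au paragraphe pr\'ec\'edent, cette obstruction s'annule gr\^ace \`a une rigidit\'e dynamique: lin\'earisation $\mathcal{C}^{\infty}$ des parties finies ou hyperboliques (via Sternberg \cite{Shlomo1}, \cite{Shlomo2} et Takens \cite{Takens}), ou bien structure libre des relations.

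Un angle compl\'ementaire consisterait \`a interpr\'eter le syst\`eme $\{R_{\alpha}(f_{1}, \ldots, f_{m}) = \mathrm{Id}\}_{\alpha \in A}$ comme un syst\`eme d'\'equations implicites non commutatif dans $\mathrm{Diff}(\mathbb{R}^{n}_{0})$, admettant par hypoth\`ese une solution formelle, puis \`a d\'evelopper une version groupale du th\'eor\`eme de Malgrange sur les id\'eaux ferm\'es ou de l'approximation d'Artin-Tougeron, permettant le passage formel $\to \mathcal{C}^{\infty}$ avec m\^eme jet de Taylor \`a l'origine. Cela supposerait de ma\^{\i}triser la lin\'earisation des \'equations $\phi \circ f = f \circ \phi^{\prime}$ modulo plat, et d'it\'erer une proc\'edure de type Newton dans le groupe.

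L'obstacle principal est le cas o\`u tous les $\hat{f}_{i}$ sont tangents \`a l'identit\'e (i.e. $J^{1}\hat{f}_{i} = \mathrm{Id}_{\mathbb{R}^{n}}$): aucun \'el\'ement hyperbolique n'est disponible, les outils de rigidit\'e de type Sternberg sont inop\'erants, et $\mathrm{Flat}(\mathbb{R}^{n}_{0})$, non ab\'elien et de dimension infinie, ne pr\'esente pas de structure cohomologique directement exploitable. Cette difficult\'e appara\^{\i}t d\'ej\`a pour des sous groupes nilpotents dont les g\'en\'erateurs ont des formes normales formelles plates, et semble requ\'erir des outils nouveaux, peut-\^etre analogues au r\'esultat de Mattei-Moussu \cite{Mattei} utilis\'e plus haut pour les feuilletages, ou une th\'eorie de quasi-analyticit\'e adapt\'ee aux groupes de diff\'eomorphismes.
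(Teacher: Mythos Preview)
Il n'y a pas de preuve dans l'article \`a laquelle comparer votre proposition: l'\'enonc\'e est explicitement pr\'esent\'e comme une \emph{conjecture} (introduite par la phrase \og Les exemples pr\'ec\'edents conduisent \`a la: \fg), et l'article n'en fournit aucune d\'emonstration. Les auteurs se contentent de traiter quelques cas particuliers (groupes libres, produits libres de groupes finis ou lin\'earisables, groupes r\'esolubles en dimension~$1$, relations de type $\hat{a}\hat{b}\hat{a}^{-1}=\mathcal{R}(\hat{b},\hat{c}_1,\ldots,\hat{c}_p)$ via Sternberg ou Takens), puis formulent le probl\`eme g\'en\'eral comme ouvert.

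Votre texte n'est d'ailleurs pas une preuve non plus, mais un programme de recherche --- ce dont vous semblez conscient, puisque vous employez le conditionnel (\og consisterait \fg, \og je partirais \fg) et que vous identifiez vous-m\^eme l'obstacle principal sans le r\'esoudre. Le cadre que vous d\'ecrivez (suite exacte avec $\mathrm{Flat}(\mathbb{R}^n_0)$, obstruction dans un $H^2$ non ab\'elien, analogie avec Artin--Tougeron) est raisonnable et recoupe l'esprit des cas trait\'es dans l'article, mais plusieurs \'etapes restent de simples d\'eclarations d'intention: la r\'eduction transfinie au cas de pr\'esentation finie, l'existence d'une \og version groupale \fg{} de Malgrange/Artin--Tougeron, et surtout le traitement du cas tangent \`a l'identit\'e. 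Ce dernier point, que vous soulignez vous-m\^eme, est pr\'ecis\'ement la raison pour laquelle la conjecture reste ouverte; vous ne proposez aucune id\'ee concr\`ete pour le contourner, et les analogies \'evoqu\'ees (Mattei--Moussu, quasi-analyticit\'e) ne sont pas d\'evelopp\'ees.

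En r\'esum\'e: il n'y a pas de lacune \`a corriger par rapport \`a l'article, puisque celui-ci ne pr\'etend pas d\'emontrer l'\'enonc\'e; mais votre proposition n'est pas une preuve et ne doit pas \^etre pr\'esent\'ee comme telle.
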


\subsection{Feuilletages de codimension 1}

Consid\'erons  $\hat{\omega} \in \hat{\Omega}^{1}_{n}$, $n \geq 3$, une 1-forme formelle int\'egrable ($\hat{\omega} \wedge d \hat{\omega} = 0$) \`a singularit\'e alg\'ebriquement isol\'ee; i.e. si $\hat{\omega} = \sum_{i = 1}^{n} \hat{a}_{i} d x_{i}$, $\hat{a}_{i} \in \hat{\mathcal{E}}_{n}$, alors le quotient $\hat{\mathcal{E}}_{n} / (\hat{a}_{1} , \ldots , \hat{a}_{n})$ est de dimension finie. Le Th\'eor\`eme de Malgrange formel (d\^u en fait \`a J. Martinet) assure l'existence  d'une int\'egrale premi\`ere formelle $\hat{f}$:  $\hat{\omega} = \hat{g} d \hat{f}$ avec $\hat{f}$ et $\hat{g}$ appartenant \`a $\hat{\mathcal{E}}_{n}$. En fait il suffit qu'il existe une immersion $i : \mathbb{R}^{3}_{, 0} \rightarrow \mathbb{R}^{n}_{, 0}$ d'un trois plan tel que $i^{\ast} \hat{\omega}$ soit \`a singularit\'e isol\'ee pour arriver \`a la m\^eme conclusion. Soient $f$ et $g$  des r\'ealisations $\mathcal{C}^{\infty}$ de $\hat{f}$ et $\hat{g}$ respectivement. La 1-forme $\omega = g d f$ est int\'egrable et satisfait $T_{\underline{0}} \omega = \hat{\omega}$.
Dans la description des singularit\'es analytiques des feuilletages de codimension 1 appara\^{i}t  naturellement les feuilletages obtenus par image r\'eciproque de feuilletages en petite dimension par un morphisme. Il en est ainsi pour les "ph\'enom\`enes de Kupka" \cite{Cerveau2}. Si $\hat{\omega}_{0} \in \Omega_{2}^{1}$ d\'efinit un feuilletage formel de $\mathbb{R}^{2}_{, 0}$ et si $\hat{F} = (\hat{F}_{1} , \hat{F}_{2}) \in \hat{\mathcal{E}}_{n}^{2}$ est un morphisme formel de $\mathbb{R}^{n}_{, 0}$ dans $\mathbb{R}^{2}_{, 0}$, alors $\hat{F}^{\ast}\hat{\omega}_{0}$ d\'efinit un feuilletage formel de $\mathbb{R}^{n}_{, 0}$. Si $F$ et $\omega_{0}$ sont des r\'ealisations $\mathcal{C}^{\infty}$ de $\hat{F}$ et $\hat{\omega}_{0}$, alors $\omega = F^{\ast} \omega_{0}$ est une r\'ealisation $\mathcal{C}^{\infty}$ int\'egrable de $\hat{\omega}$.

D'autres feuilletages apparaissant dans la classification abondent dans le sens pr\'ec\'edent. Par exemple soit $\hat{\omega} \in \hat{\Omega}_{n}^{1}$ de type "logarithmique" $\hat{\omega} = \hat{f}_{1} \ldots \hat{f} \sum_{i = 1}^{p} \lambda_{i} \frac{d \hat{f}_{i}}{\hat{f}_{i}}$ o\`u $\lambda_{i} \in \mathbb{R}$ et $\hat{f}_{i} \in \hat{\mathcal{E}}_{n}$. Si  les $f_{i} \in \mathcal{E}_{n}$ sont des r\'ealisations $\mathcal{C}^{\infty}$ des $\hat{f}_{i}$, alors $\omega = f_{1} \ldots f \sum_{i = 1}^{p} \lambda_{i} \frac{d f_{i}}{f_{i}}$ est une r\'ealisation $\mathcal{C}^{\infty}$ int\'egrable de $\hat{\omega}$. Ici encore nous proposons la
conjecture:

\begin{conjecture}
Soit $\hat{\omega} \in \hat{\Omega}_{n}^{1}$ une 1-forme formelle int\'egrable. Il existe $\omega \in \Omega_{n}^{1}$ int\'egrable telle que $T_{\underline{0}} \omega = \hat{\omega}$.
\end{conjecture}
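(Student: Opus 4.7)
Je suivrais deux axes compl\'ementaires: utiliser les r\'esultats d\'ej\`a \'etablis dans le texte comme briques \'el\'ementaires, puis r\'eduire le cas g\'en\'eral \`a ces briques par \'eclatements. Tout d'abord, si $\hat\omega$ admet une int\'egrale premi\`ere formelle non triviale $\hat f$, on \'ecrit $\hat\omega = \hat g\, d\hat f$, et, par le th\'eor\`eme de Borel appliqu\'e s\'epar\'ement \`a $\hat f$ et $\hat g$, on obtient $\omega = g\, df$ qui r\'ealise $\hat\omega$ et est automatiquement int\'egrable. Le texte rappelle que l'existence d'une telle int\'egrale premi\`ere est garantie par le th\'eor\`eme formel de Malgrange--Martinet d\`es que $\hat\omega$ est \`a singularit\'e alg\'ebriquement isol\'ee (ou qu'une immersion $i: \mathbb{R}^{3}_{,0} \hookrightarrow \mathbb{R}^{n}_{,0}$ rend $i^{\ast}\hat\omega$ isol\'ee). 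Les formes logarithmiques $\hat f_{1}\ldots\hat f_{p}\sum\lambda_{i}\tfrac{d\hat f_{i}}{\hat f_{i}}$ et les images r\'eciproques $\hat F^{\ast}\hat\omega_{0}$ de feuilletages de petite dimension se r\'ealisent aussi imm\'ediatement en rempla\c{c}ant chaque $\hat f_{i}$ ou chaque composante de $\hat F$ par une r\'ealisation de Borel.

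\textbf{Cas g\'en\'eral par r\'eduction des singularit\'es.} Pour un $\hat\omega$ g\'en\'eral, je tenterais d'appliquer un analogue formel du th\'eor\`eme de r\'eduction des singularit\'es pour feuilletages de codimension 1 (\`a la Cano, g\'en\'eralisant Seidenberg): apr\`es une suite d'\'eclatements $\pi: \widetilde M \to (\mathbb{R}^{n}, \underline 0)$, la forme $\pi^{\ast}\hat\omega$ n'a plus que des singularit\'es dites \emph{simples} --- typiquement de type logarithmique \`a croisements normaux ou pr\'esant\-ant une partie lin\'eaire non d\'eg\'en\'er\'ee. Sur chaque mod\`ele simple local, la premi\`ere \'etape produit une r\'ealisation $\mathcal{C}^{\infty}$ int\'egrable $\widetilde\omega_{\alpha}$ de $\pi^{\ast}\hat\omega$. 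Il resterait alors \`a les recoller en une forme int\'egrable globale $\widetilde\omega$ sur $\widetilde M$ co\"{\i}ncidant avec $\pi^{\ast}\hat\omega$ \`a l'ordre infini le long du diviseur exceptionnel $\pi^{-1}(\underline 0)$, puis \`a redescendre $\widetilde\omega$ en une $\omega$ sur $\mathbb{R}^{n}$ par une adaptation de la technique utilis\'ee par les auteurs apr\`es le Th\'eor\`eme 32 (modification par un diff\'eomorphisme plat le long du diviseur).

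\textbf{Obstacle principal.} La difficult\'e centrale est que $\omega \wedge d\omega = 0$ est une \'equation \emph{quadratique} qui n'est pas stable par addition de termes plats: contrairement au Borel scalaire, on ne peut pas impun\'ement corriger une r\'ealisation par une fonction plate sans risquer de briser l'int\'egrabilit\'e. Le recollement des $\widetilde\omega_{\alpha}$ par partition de l'unit\'e \'echoue donc a priori, et il faudrait disposer d'un analogue du th\'eor\`eme de Tougeron (Th\'eor\`eme 28) en classe $\mathcal{C}^{\infty}$ pour le syst\`eme quasi-lin\'eaire contraint constitu\'e de $\omega \wedge d\omega = 0$ avec pr\'e-jet impos\'e. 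Un tel \'enonc\'e, coupl\'e au Th\'eor\`eme 25 de Malgrange sur les relations, serait le c\oe ur technique d'une preuve compl\`ete; en son absence, l'approche par mod\`eles reste tributaire d'une classification des feuilletages formels en dimension $n \geq 3$ qui n'est pas disponible aujourd'hui.
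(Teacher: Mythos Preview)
Il faut d'abord relever un point essentiel: dans l'article, cet \'enonc\'e est une \emph{conjecture}, pas un th\'eor\`eme. Les auteurs ne donnent aucune d\'emonstration; ils se contentent, juste avant de la formuler, de traiter exactement les cas particuliers que tu rappelles dans ta premi\`ere partie (singularit\'e alg\'ebriquement isol\'ee via Malgrange--Martinet, formes logarithmiques, images r\'eciproques depuis la dimension~$2$). Il n'y a donc pas de ``preuve du papier'' \`a laquelle comparer ta proposition: ta premi\`ere section reproduit fid\`element ce que l'article fait, et le reste est pr\'ecis\'ement ce que l'article laisse ouvert.

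Ta seconde partie (r\'eduction des singularit\'es \`a la Cano, recollement, redescente) est une strat\'egie plausible mais, comme tu le reconnais toi-m\^eme dans ton paragraphe ``Obstacle principal'', elle n'est pas une preuve: le recollement par partition de l'unit\'e d\'etruit l'int\'egrabilit\'e, et tu invoques un analogue hypoth\'etique du th\'eor\`eme de Tougeron pour l'\'equation quadratique $\omega\wedge d\omega=0$ qui n'existe pas dans la litt\'erature. Autrement dit, tu identifies correctement le verrou sans le lever. Ce n'est pas un d\'efaut de ta part au sens o\`u l'article lui-m\^eme ne pr\'etend pas le lever; mais il faut \^etre clair que ce que tu proposes est un programme, pas une d\'emonstration, et que l'\'enonc\'e reste ouvert.
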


D. Cerveau \\
Universit\'e de Rennes1 \\
IRMAR, CNRS UMR 6625 \\
Campus de Beaulieu, B\^at. 22-23 \\
F-35042 Rennes Cedex, France\\
dominique.cerveau@univ-rennes1.fr \\
\\

D. Garba Belko \\
Universit\'e Abdou Moumouni \\
Facult\'e des Sciences\\
D\'epartement de Math\'ematiques\\
B.P. 10662 Niamey, Niger \\
garbabelkodjibrilla@yahoo.fr

\begin{thebibliography} {MM}

{\small
\bibitem{Artin} M. Artin: On the solutions of analytic equations. Inventiones Math. 5, 196, pages 543-572, 1995.

\bibitem{Briancon} J. Brian\c{c}on, H. Skoda: Sur la cl\^oture int\'egrale d'un id\'eal de germes de fonctions holomorphes en un point de $\mathbb{C}^{n}$. C. R. Acad. Sc. Paris, Tome 278, 1974.

\bibitem{Bourbaki} N. Bourbaki: Groupes et Alg\`ebres de Lie, Masson, 1981.


\bibitem{Camacho} C. Camacho, P. Sad: Invariant varieties through singularities of vector fields. Ann. of Math., 2, 115, $N^{\circ} 3$, 1982.


\bibitem{Cerveau1} D. Cerveau, R. Moussu: Groupes d'automorphismes de $\mathbb{C}, 0)$ et \'equations diff\'erentielles $y d y + \ldots = 0$. Bull. Soc. Math. France, 116, 1988, pages 459-488.

\bibitem{Cerveau2} D. Cerveau, J.-F. Mattei: {\it Formes holomorphes singuli\`eres}. Ast\'erisque 97, Soc. Math.  France, 1982.

\bibitem{Dumortier} F. Dumortier. Singularities of vector fields on the plane. J. Differential Equations 23, $N^{\circ} 1$, 1977, pages 53-106.

\bibitem{Hermann} R. Hermann: Formal linearization of vector fields and related cohomology 1. 2. J. Differential Geometry 8, pages 1-14, 1973 Ibid. 8, pages 15-24, 1973.

\bibitem{Kelley} A. Kelley: The stable, center-stable, center-unstable and unstable manifolds. Publi\'e en Appencice C dans R. Abraham and J. Robbin: Transversal mappings and flows. Benjamin, New-York, 1967.

\bibitem{Malgrange} B. Malgrange: Id\'eaux de fonctions diff\'erentiables et division des distributions. 121, Ed. Ec. Polytech., Palaiseau, 2003.

\bibitem{Malgrange1} B. Malgrange: Frobenius avec singularit\'es 1, codimension 1. Publ. Math. I.H.E.S., Vol. 46, 1976, pages 163-173.

\bibitem{Martinet} J. Martinet: Normalisation des champs de vecteurs holomorphes. Seminaire N. Bourbaki $N^{\circ} 564$, 1980-1981, pages 55-70.

\bibitem{Mattei} J. F. Matttei, R. Moussu: Holonomie et int\'egrales premi\`eres. Ann. Inst. Fourier, Grenoble, Tome 26, $N^{\circ} 2$, 1976, pages 171-220.

\bibitem{Moussu} R. Moussu: Sur l'existence d'int\'egrales premi\`eres pour un germe de forme de Pfaff. Ann. Inst. Fourier, Grenoble, Tome 26, $N^{\circ} 2$, 1976, pages 171-220.

\bibitem{Shlomo1} S. Sternberg: Local contractions and a theorem of Poincar\'e. Amer. J. Math., 79, 1957, pages 809-824.

\bibitem{Shlomo2} S. Sternberg: On the structure of local homeomorphisms of euclidean $n-{\rm space}$. 2.  Amer. J. Math., 80, 1958, pages 623-631.

\bibitem{Roussarie} R. Roussarie: Mod\`eles locaux de champs et de formes. Ast\'erisque 30, Soc. Math. France, 1975.

\bibitem{Takens} F. Takens: Normal forms for certains singularities of vector fields. Ann. Inst. Fourier, Tome 23, $N^{\circ} 8$, 1973, pages 163-195.

\bibitem{Tougeron1} J.-C. Tougeron: Id\'eaux de fontions diff\'erentiables 1. Ann. Inst. Fourier, Grenoble, Tome 18, $N^{\circ} 1$, 1968, pages 177-240.

\bibitem{Tougeron2} J.-C. Tougeron, J. Merrien: Id\'eaux de fontions diff\'erentiables 2. Ann. Inst. Fourier, Grenoble, Tome 20, $N^{\circ} 1$, 1970, pages 179-233.


 \bibitem{Tougeron3} J.-C. Tougeron: Solutions d'un syst\`eme d'\'equations analytiques r\'eelles. Inst. Fourier, Grenoble, Tome 26, $N^{\circ} 3$, 1976, pages 109-135.}
\end{thebibliography}
\end{document}